\title[Duality on generalized cuspidal edges]{%
 Duality on generalized cuspidal edges preserving 
 singular set images and 
 first fundamental forms 
}
\date{July 29, 2020}
\author{Atsufumi Honda}
\address[Atsufumi Honda]{
Department of Applied Mathematics, 
Faculty of Engineering, Yokohama National University,
79-5 Tokiwadai, Hodogaya, Yokohama 240-8501, Japan
}
\email{honda-atsufumi-kp@ynu.ac.jp}
\author{Kosuke Naokawa}
\address[Kosuke Naokawa]{%
Department of Computer Science, 
Faculty of Applied Information Science,
Hiroshima Institute of Technology,  
2-1-1 Miyake, Saeki, Hiroshima, 731-5193, Japan
}
\email{k.naokawa.ec@cc.it-hiroshima.ac.jp}
\author{Kentaro Saji}
\address[Kentaro Saji]{%
  Department of Mathematics,
  Faculty of Science,
  Kobe University,
  Rokko, Kobe 657-8501}
\email{saji@math.kobe-u.ac.jp}
\author{Masaaki Umehara}
\address[Masaaki Umehara]{%
  Department of Mathematical and Computing Sciences,
  Tokyo Institute of Technology,
  Tokyo 152-8552, Japan}
\email{umehara@is.titech.ac.jp}
\author{Kotaro Yamada}
\address[Kotaro Yamada]{%
  Department of Mathematics,
  Tokyo Institute of Technology,
  Tokyo 152-8551, Japan}
\email{kotaro@math.titech.ac.jp}
\dedicatory{Dedicated to Professor 
Toshizumi Fukui for his sixtieth birthday.}
\keywords{
  {singularity},
  {wave front},
  {cuspidal edge},
  {first fundamental form}}
\subjclass[2010]{57R45; 53A05}
\thanks{%
  The first author was partially supported by the
  Grant-in-Aid for Young Scientists (B), No.~16K17605,
  the second author 
  was partially supported by 
  Grant-in-Aid for Young Scientists (B), No.~17K14197,
  and the third author
  was 
  partially supported by 
  (C) No.\ 18K03301 from JSPS.
  The forth author was partially 
  supported by the Grant-in-Aid for 
  Scientific Research (A) No.\ 26247005, 
  and the fifth author 
  was partially 
  supported by the Grant-in-Aid for 
  Scientific Research (B) No.\ 17H02839.
}%
\newcommand{\op}[1]{{\operatorname{#1}}}
\newcommand{\vect}[1]{{\boldsymbol{#1}}}
\newcommand{\ccr}{\op{ccr}}
\newcommand{\tccr}{\op{\tiny ccr}}
\newcommand{\R}{\boldsymbol{R}}
\newcommand{\mc}[1]{{\mathcal #1}}
\newcommand{\mb}[1]{{\mathbf #1}}
\newcommand{\pmt}[1]{{\begin{pmatrix} #1  \end{pmatrix}}}
\renewcommand{\phi}{\varphi}
\renewcommand{\epsilon}{\varepsilon}
\newcommand{\dy}{\displaystyle}
\renewcommand{\det}{\op{det}}
\numberwithin{equation}{section}
\newtheorem{theorem}{Theorem}[section]
\newtheorem{proposition}[theorem]{Proposition}
\newtheorem{corollary}[theorem]{Corollary}
\newtheorem{lemma}[theorem]{Lemma}
\newtheorem{fact}[theorem]{Fact}
\theoremstyle{definition}
\newtheorem{defi}[theorem]{Definition}
\newtheorem{remark}[theorem]{Remark}
\newtheorem{example}[theorem]{Example}
\newtheorem*{ack}{Acknowledgements}
\begin{document}
\maketitle
\begin{abstract}
In the second, fourth and fifth authors'
previous work, a duality on generic real analytic 
cuspidal edges in the Euclidean 3-space  $\R^3$
preserving their singular set images and
first fundamental forms, was given.
Here, we call this an \lq\lq isometric duality\rq\rq.
When the singular set image has no symmetries
and does not lie in a plane, the dual 
cuspidal edge is not congruent to 
the original one.
In this paper, we show that this duality
extends to generalized cuspidal edges
in $\R^3$, including cuspidal cross caps,
and 5/2-cuspidal edges.
Moreover, we give several new geometric
insights on this duality.
\end{abstract}

\section*{Introduction}
Consider a generic cuspidal edge germ $f$ whose singular set 
image is a given space curve $C$.
In the second, fourth and fifth authors'
previous work \cite{NUY}, 
the existence of an isometric dual $\check f$ of $f$
was shown, which is a cuspidal edge germ
having the same first fundamental form 
as $f$. Roughly speaking, a cuspidal edge which 
has the same first fundamental form 
and the same singular set image as $f$
but is
not right equivalent to $f$, is called an \lq\lq isomer'' of $f$
(see Definition \ref{def:IS} for details).
The isometric dual $\check f$ is a typical example
of isomers of $f$. Recently, the authors found that 
if we reverse the orientation of $C$,
two other candidates of isomers of $f$ denoted by
$f_*$ and $\check f_*$ are obtained
by imitating the construction of $\check f$.
These two map germs $f_*$ and $\check f_*$ are 
cuspidal edge germs which are called
the {\it inverse} and  the {\it inverse dual} of $f$, respectively 
($\check f_*$ is just the isometric dual of $f_*$).
In this paper, we will show that all of isomers of $f$
are right equivalent to one of 
$$
\check f,\,\, f_*,\,\, \check f_*.
$$
We will also determine the number of congruence classes 
in the set of 
isomers of $f$.

\medskip
By the terminology \lq\lq$C^r$-differentiable\rq\rq\
we mean $C^\infty$-differentiability if $r=\infty$
and real analyticity if $r=\omega$.
We denote by $\R^3$ the Euclidean $3$-space.
Let $U$ be a neighborhood of the origin $(0,0)$
in the $uv$-plane $\R^2$, and let
$f:U\to \R^3$ be a $C^r$-map.
Without loss of generality, we may assume $f(o)=\mb 0$,
where
\begin{equation}\label{eq:zero}
o:=(0,0),\qquad \mb 0:=(0,0,0).
\end{equation}
A point $p\in U$ is called a {\it singular point}
if $f$ is not an immersion at $p$.
A singular point $p\in U$ is called a {\it cuspidal edge point}
(resp. a {\it generalized cuspidal edge point}) 
if there exist local $C^r$-diffeomorphisms
$\phi$ on $\R^2$ and $\Phi$ on $\R^3$ such that
$\phi(o)=p$, $\Phi(f(p))=\mb 0$ and
$$
(f_{3/2}:=)(u,v^2,v^3)=\Phi\circ f\circ\phi (u,v)\quad
\left(\mbox{resp.}\,\, 
(u, v^2, v^3\alpha(u,v))=\Phi\circ f\circ\phi(u,v)\,\right),
$$
where $\alpha(u,v)$ is a $C^r$-function.
Similarly,
a singular point $p\in U$ is called a {\it 
$5/2$-cuspidal edge point}
(resp.~a {\it fold singular point}) 
if there exist local $C^r$-diffeomorphisms
$\phi$ on $\R^2$ and $\Phi$ on $\R^3$ such that 
$\phi(o)=p$, $\Phi(f(p))=\mb 0$ and
$$
(f_{5/2}:=)(u,v^2,v^5)=\Phi\circ f\circ\phi (u,v)\quad
\left(\mbox{resp.}\,\, 
(u, v^2, 0)=\Phi\circ f\circ\phi(u,v)\,\right).
$$
Also, a singular point $p\in U$ is called a {\it 
cuspidal cross cap point} 
if there exist local $C^r$-diffeomorphisms
$\phi$ on $\R^2$ and $\Phi$ on $\R^3$ such that 
$\phi(o)=p$, $\Phi(f(p))=\mb 0$ and
$$
(f_{\ccr}:=)(u,v^2,uv^3)=\Phi\circ f\circ\phi(u,v).
$$
These singular points are
all generalized cuspidal edge points.

Let ${\mc G}^{r}_{3/2}(\R^2_o,\R^3)$
(resp. ${\mc G}^r(\R^2_o,\R^3)$)
be the set of germs of $C^r$-cuspidal edges 
(resp. generalized $C^r$-cuspidal edges)
$f(u,v)$ satisfying $f(o)=\mb 0$.
We fix $l>0$ and
consider an embedding (i.e. a simple regular space curve)
$$
\mb c:J\to \R^3\qquad (J:=[-l,l])
$$
such that $\mb c(0)=\mb 0$. 
We do not assume here that $u\mapsto \mb c(u)$
is the arc-length parametrization
(if necessary, we assume this in latter sections).
We denote by $C$ the image of 
$\mb c$.
Here, we ignore the orientation of $C$
and think of it as the singular set image (i.e. the image of the singular set)
of $f$.
We let ${\mc G}^{r}_{3/2}(\R^2_o,\R^3,C)$
(resp. ${\mc G}^r(\R^2_o,\R^3,C)$)
be the subset of ${\mc G}^{r}_{3/2}(\R^2_o,\R^3)$
(resp. ${\mc G}^r(\R^2_o,\R^3)$)
such that 
the singular set image of $f$ is contained in $C$
(we call $C$ the {\it edge} of $f$). 
Similarly, a subset of 
${\mc G}^{r}(\R^2_o,\R^3,C)$ denoted by
$$
{\mc G}^{r}_{\tccr}(\R^2_o,\R^3,C),\qquad
(\mbox{resp.}\,\,\, {\mc G}^{r}_{5/2}(\R^2_o,\R^3,C)\,)
$$
consisting of germs of cuspidal cross caps 
(resp. $5/2$-cuspidal edges) is also defined.

Throughout this paper, we assume
the curvature function $\kappa(u)$ of $\mb c(u)$ satisfies
\begin{equation}\label{eq:condK}
\kappa(u)>0\qquad (u\in J).
\end{equation}

\begin{figure}[bth]
\begin{center}
        \includegraphics[height=6.5cm]{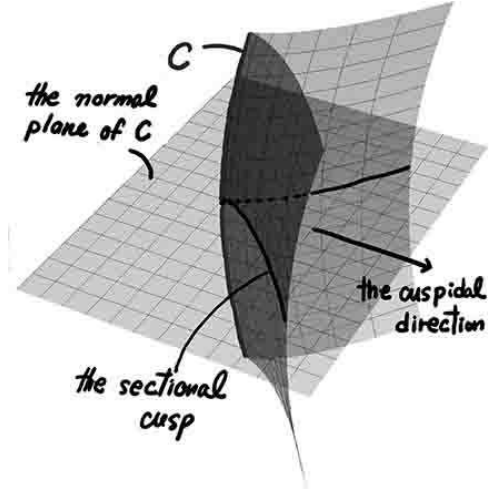}
\caption{A cuspidal edge and its sectional cusp}
\label{Fig:crossing1}
\end{center}
\end{figure}

Let $U$ be a neighborhood of $J\times \{0\}$ of $\R^2$
and $f:U\to \R^3$ a $C^r$-map consisting only of generalized
cuspidal edge points along $J\times \{0\}$
such that
\begin{equation}\label{eq:FC}
f(u,0)=\mb c(u)\qquad (u\in J).
\end{equation}
We denote by $\mc G^{r}(\R^2_J,\R^3,C)$ the set of such $f$
($f$ is called a {\it generalized cuspidal edge along $C$}).
Like as the case of map germs at $o$, the sets
$$
{\mc G}^{r}_{3/2}(\R^2_J,\R^3,C),\quad 
{\mc G}^{r}_{\tccr}(\R^2_J,\R^3,C),\quad
 {\mc G}^{r}_{5/2}(\R^2_J,\R^3,C)
$$
are also canonically defined.
For each point $P$ on the edge $C$, 
the plane $\Pi(P)$ passing through $P$
which is perpendicular to the curve $C$
is called the {\it normal plane} of $f$
at $P$. 
The section of the image of $f$ by the normal plane 
$\Pi(P)$ of $C$ at $P$ is a planar curve with 
a singular point at $P$.
We call this the {\it sectional cusp} of $f$ at $P$.
Moreover, we can find a tangent
vector $\mb v\in T_P\R^3$ at $P$, which points in
the tangential direction of the sectional cusp at $P$.
We call $\mb v$ the {\it cuspidal direction}
 (cf. \eqref{eq:fvv} and Figure \ref{Fig:crossing1}).
The angle $\theta_P$ of the cuspidal direction 
from the principal normal vector of $C$
at $P$ is called the {\it cuspidal angle}.

If we normalize the initial value 
$\theta_{\mb c(0)}\in (-\pi,\pi]$ 
at $\mb c(0)(=\mb 0)$,
then the cuspidal angle 
$$
\theta(u):=\theta_{\mb c(u)}\qquad (u\in J)
$$ 
at $\mb c(u)$
can be uniquely determined
as a $C^r$-function on $J$.
In \cite{MSUY, SUY}, the singular curvature 
$\kappa_s(u)$ 
and the limiting normal curvature 
$\kappa_\nu(u)$ 
along the edge $\mb c(u)$
are defined.
In our present situation,
they can be expressed as (cf. \cite[Remark 1.9]{F})
\begin{equation}\label{eq:sn2}
\kappa_s(u):=\kappa(u) \cos \theta(u),
\qquad \kappa_\nu(u):= \kappa(u) \sin \theta(u)\qquad (u\in J).
\end{equation}
By definition, 
$
\kappa(u)=\sqrt{\kappa_s(u)^2+\kappa_{\nu}(u)^2}
$
holds on $J$.
We say that $f\in \mc G^r(\R^2_o,\R^3,C)$ 
is {\it generic} at $o$   if
\begin{equation}\label{eq:kks}
|\kappa_s(0)|<\kappa(0).
\end{equation}
We denote by 
${\mc G}^r_*(\R^2_o,\R^3,C)$ the set of germs of
generic generalized $C^r$-cuspidal 
edges in ${\mc G}^r(\R^2_o,\R^3,C)$, and set
\begin{align}
\nonumber
{\mc G}^r_{*,3/2}(\R^2_o,\R^3,C)&:=
{\mc G}^r_*(\R^2_o,\R^3,C)\cap 
{\mc G}^r_{3/2}(\R^2_o,\R^3,C), \\
\label{eq:GGG}
{\mc G}^r_{*,\tccr}(\R^2_o,\R^3,C)&:=
{\mc G}^r_*(\R^2_o,\R^3,C)\cap 
{\mc G}^r_{\tccr}(\R^2_o,\R^3,C), \\
\nonumber
{\mc G}^r_{*,5/2}(\R^2_o,\R^3,C)&:=
{\mc G}^r_*(\R^2_o,\R^3,C)\cap 
{\mc G}^r_{5/2}(\R^2_o,\R^3,C).
\end{align}
On the other hand, for $f\in \mc G^r(\R^2_J,\R^3,C)$,
we consider the condition
\begin{equation}\label{eq:kks2}
|\kappa_s(u)|<\kappa(u)\qquad (u\in J),
\end{equation}
which implies that all singular points of $f$ along the
curve $C$ are generic.
We denote by
\begin{equation}\label{eq:admissble0}
\mc G^r_{*}(\R^2_J,\R^3,C) 
\end{equation}
the set of $f\in \mc G^r(\R^2_J,\R^3,C)$ satisfying \eqref{eq:kks2}.
Moreover, 
if 
\begin{equation}\label{eq:kks3}
\max_{u\in J}|\kappa_s(u)|<\min_{u\in J}\kappa(u) 
\end{equation}
holds, then $f$ is said to be {\it admissible}.
We denote by
\begin{equation}\label{eq:admissible}
\mc G^r_{**}(\R^2_J,\R^3,C) 
\end{equation}
the set of admissible $f\in \mc G^r(\R^2_J,\R^3,C)$.
Then by imitating \eqref{eq:GGG},
\begin{equation}\label{eq:admissible2}
\mc G^r_{*,3/2}(\R^2_J,\R^3,C),\qquad \mc G^r_{**,3/2}(\R^2_J,\R^3,C)
\end{equation}
are also defined. 
The following assertion is obvious:

\begin{lemma}\label{lem:Red}
Suppose that $f$ belongs to ${\mc G}^r_{3/2}(\R^2_o,\R^3,C)$ 
$($resp. ${\mc G}^r_{*,3/2}(\R^2_{o},\R^3,C)\,)$.
Then there exists $\epsilon(>0)$
such that $f$ is an element of
${\mc G}^r_{3/2}(\R^2_{J(\epsilon)},\R^3,C)$
$($resp. ${\mc G}^r_{**,3/2}(\R^2_{J(\epsilon)},\R^3,C))$,
where $J(\epsilon):=[-\epsilon,\epsilon]$.
\end{lemma}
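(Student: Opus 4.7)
The plan is to reduce the lemma to two elementary observations: openness of the cuspidal edge condition in the $C^r$-topology, and propagation of strict inequalities via continuity on compact intervals. No heavy machinery is required.

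First I would represent the germ $f \in \mc G^r_{3/2}(\R^2_o,\R^3,C)$ by a $C^r$-map on some open neighborhood $U$ of $o$, arranged (by a $C^r$-coordinate change on the source absorbed into the germ data) so that the singular set of $f|_U$ is $U\cap\{v=0\}$ and $f(u,0)=\mb c(u)$ for $u$ near $0$. The cuspidal edge property at a singular point is characterized by the Kokubu--Rossman--Saji--Umehara--Yamada criterion, which amounts to the non-vanishing of an explicit polynomial in the jet of $f$ and is therefore an open condition along the singular set. Since this criterion is satisfied at $o$, there exists $\epsilon_0>0$ such that every point of $J(\epsilon_0)\times\{0\}$ is a cuspidal edge point, giving $f \in \mc G^r_{3/2}(\R^2_{J(\epsilon_0)},\R^3,C)$.

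For the second assertion, assume in addition $|\kappa_s(0)|<\kappa(0)$, and set $\delta := \kappa(0) - |\kappa_s(0)|>0$. Since $\kappa$ and $\kappa_s$ are continuous (indeed $C^r$) functions of $u$ on a neighborhood of $0$, I would choose $\epsilon \in (0,\epsilon_0]$ small enough that $|\kappa_s(u)|<|\kappa_s(0)|+\delta/3$ and $\kappa(u)>\kappa(0)-\delta/3$ hold simultaneously on $[-\epsilon,\epsilon]$. Together these inequalities yield $\max_{u\in J(\epsilon)}|\kappa_s(u)| < \min_{u\in J(\epsilon)}\kappa(u)$, which is precisely the admissibility condition \eqref{eq:kks3}; combined with the first part this gives $f \in \mc G^r_{**,3/2}(\R^2_{J(\epsilon)},\R^3,C)$.

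The only mildly delicate point is justifying the normalization $f(u,0)=\mb c(u)$ implicit in the target class: since $f$ is a cuspidal edge at $o$, its singular set in the source is a regular curve through $o$ which maps diffeomorphically onto $C$ near $\mb 0$, so a source diffeomorphism straightening this curve to $\{v=0\}$ together with a reparametrization of $C$ realizes the normalization without affecting the intrinsic quantities $\kappa$ and $\kappa_s$. Beyond this bookkeeping the argument is purely continuity plus openness, which is why the authors describe the lemma as obvious.
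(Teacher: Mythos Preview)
Your argument is correct and is precisely the kind of reasoning the authors have in mind: the paper provides no proof at all, simply declaring the lemma ``obvious,'' and your two ingredients---openness of the cuspidal-edge criterion along the singular curve and continuity of $\kappa,\kappa_s$ to pass from the pointwise inequality \eqref{eq:kks} to the admissibility condition \eqref{eq:kks3} on a small interval---are exactly what makes it so. The bookkeeping you flag about straightening the singular set to $\{v=0\}$ and matching the parametrization to $\mb c$ is the only content, and you handle it correctly.
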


Let $\op{O}(3)$ (resp.~$\op{SO}(3)$)
be the orthogonal group
(resp. the special orthogonal group) 
as the isometry group
(resp. the orientation preserving isometry group)
of $\R^3$ fixing the origin $\mb 0$.

\begin{defi}\label{def:distinct}
Suppose that $f_i$ ($i=1,2$) are generalized cuspidal edges
belonging to $\mc G^r(\R^2_o,\R^3,C)$ (resp. $\mc G^r(\R^2_J,\R^3,C)$).
Then the image of $f_1$ 
is said to have the {\it same image} as $f_2$ if 
there exists a neighborhood $U_i(\subset \R^2)$ of $o$ (\text{resp}. $J\times \{0\}$)
such that
$f_1(U_1)=f_2(U_2)$.
On the other hand,
$f_1$ is said to be {\it congruent} to $f_2$
if there exists an orthogonal matrix $T\in \op{O}(3)$
such that $T\circ f_1$ 
has the same image as $f_2$.
\end{defi}

We then define the following two equivalence relations:

\begin{defi}\label{def:ds-sym1}
For a given $f$ belonging to ${\mc G}^r(\R^2_o,\R^3,C)$ 
(resp. ${\mc G}^r(\R^2_J,\R^3,C)$),
we denote by $ds^2_f$ its first fundamental form.
A generalized cuspidal
edge $g$
belonging to ${\mc G}^r(\R^2_o,\R^3,C)$ (resp. ${\mc G}^r(\R^2_J,\R^3,C))$
is said to be {\it right equivalent} 
to $f$
if there exists a diffeomorphism $\phi$
defined on a neighborhood of $o$ (resp. $J\times \{0\}$)
in $\R^2$ such that $g=f\circ \phi$. 
\end{defi}

\begin{defi}\label{def:ds-sym2}
For a given generalized cuspidal
edge $f\in {\mc G}^r(\R^2_o,\R^3,C)$ (resp. ${\mc G}^r(\R^2_J,\R^3,C)$),
we denote by $ds^2_f$ its first fundamental form.
A generalized cuspidal
edge $g\in {\mc G}^r(\R^2_o,\R^3,C)$ (resp. ${\mc G}^r(\R^2_J,\R^3,C)$)
is said to be {\it isometric}
to $f$
if there exists a diffeomorphism $\phi$
defined on a neighborhood of $o$ (resp. $J\times \{0\}$)
in $\R^2$
such that $\phi^*ds^2_f=ds^2_g$.

In particular, we consider the case $f=g$.
If $\phi^*ds^2_f=ds^2_f$
and $\phi$ is not the identity map,
then $\phi$ is called
a {\it symmetry} of $ds^2_f$.
Moreover, if $\phi$ reverses 
the orientation of the singular curve of $f$,
then $\phi$ is said to be {\it effective}.
\end{defi}

\begin{remark}\label{Z}
A cuspidal edge $g\in \mathcal G^r_{3/2}(\R^2_o,\R^3,C)$ 
(resp. $\mathcal G^r_{3/2}(\R^2_J,\R^3,C)$)
has the same image as a given germ 
$f\in \mathcal G^r_{3/2}(\R^2_o,\R^3,C)$
(resp. $\mathcal G^r_{3/2}(\R^2_J,\R^3,C)$)
if and only if $g$ is right equivalent to $f$
(cf. \cite{KRSUY}).
\end{remark}

If two generalized cuspidal edges
$f,g\in {\mc G}^r(\R^2_o,\R^3,C)$ 
(resp. ${\mc G}^r(\R^2_J,\R^3,C)$)
are right equivalent,
then they are isometric each other.
However, the converse may not be true.
So we give the following:

\begin{defi}\label{def:IS}
For a given $f\in {\mc G}^r(\R^2_o,\R^3,C)$
(resp. ${\mc G}^r(\R^2_J,\R^3,C)$),
a generalized cuspidal
edge $g\in {\mc G}^r(\R^2_o,\R^3,C)$
(resp. ${\mc G}^r(\R^2_J,\R^3,C)$)
is called an {\it isomer} of $f$
(cf. \cite{NUY}) if it satisfies the 
following conditions;
\begin{enumerate}
\item $g$ is isometric to $f$, and
\item $g$ is not right equivalent to $f$.
\end{enumerate}
In this situation,
we say that $g$ is a {\it faithful isomer} of $f$ if
\begin{itemize}
\item there exists a local diffeomorphism $\phi$
such that $\phi^*ds^2_f=ds^2_g$, and
\item the orientations of $C$ induced
by $u\mapsto f\circ \phi(u,0)$ 
and $u\mapsto g(u,0)$ 
are compatible with respect to the one
induced by
$u\mapsto f(u,0)$.
\end{itemize}
\end{defi}

In \cite[Corollary D]{NUY}, it was shown the existence
of an involution 
\begin{equation}\label{eq:I}
{\mc G}^\omega_{*,3/2}(\R^2_o,\R^3,C)\ni f \mapsto \check 
f\in {\mc G}^\omega_{*,3/2}(\R^2_o,\R^3,C).
\end{equation}
To construct $\check f$,
we need to apply the so-called Cauchy-Kowalevski theorem
on partial differential equations of real analytic category 
(cf. Theorem \ref{thm:S1}).
Here, $\check f$ is called the 
{\it isometric dual} of $f$, which 
satisfies the following properties:
\begin{enumerate}
\item[(i)] The first fundamental form of $\check f$
coincides with that of $f$. 
\item[(ii)] The map $\check f$ 
is a faithful isomer of $f$.
\item[(iii)]
If $\theta(P)$ is the cuspidal angle of $f$ at 
$P(\in C)$,
then $-\theta(P)$ is the cuspidal angle of $\check f$ 
at $P$.
\end{enumerate}
In \cite{NUY}, a necessary and sufficient condition
for a given positive semi-definite metric to be 
realized as the first fundamental form of a cuspidal edge 
along $C$ is given.
In this paper, 
we first prove the following using the method given in \cite{NUY}:

\medskip
\noindent
{\bf Theorem I.} \label{f1}
{\it There exists an involution $($called the first involution$)$
\begin{equation}\label{eq:IcA}
\mc I_C:{\mc G}^\omega_*(\R^2_J,\R^3,C)\ni f \mapsto \check f
\in {\mc G}^\omega_*(\R^2_J,\R^3,C)
\end{equation}
defined on ${\mc G}^\omega_*(\R^2_J,\R^3,C)$ $($cf.~\eqref{eq:admissble0}$)$
satisfying the properties 
{\rm (i)}, {\rm (ii)} and {\rm (iii)} above. 
Moreover,
regarding $f$ and $\check f$ as
map germs at $o$ $($cf. Lemma \ref{lem:Red}$)$, $\mc I_C$
induces a map 
\begin{equation}\label{eq:Io}
\mc I_o:{\mc G}^\omega_*(\R^2_o,\R^3,C)\ni f \mapsto \check f
\in {\mc G}^\omega_*(\R^2_o,\R^3,C),
\end{equation}
which gives a generalization of the map as
 in \eqref{eq:I}.}

\medskip
The existence of the map $\mc I_o$ follows also from
\cite[Theorem B]{HNUY}, since $\check f$ is strongly congruent to $f$
in the sense of \cite[Definition 3]{HNUY}.
However, the existence of the map $\mc I_C$ itself
does not follow from \cite{HNUY}, 
since $\check f$ given in Theorem~I is not a map germ at $o$, but a
map germ along the curve $C$.
Some variants of this result 
for germs of swallowtails and cuspidal cross caps
were given in \cite[Theorem B]{HNUY} using a method different from \cite{NUY}. 
(For swallowtails,
the duality corresponding to the above properties (i), (ii) and (iii) are not
obtained, see item (4) below.)
The authors find Theorem I to be 
suggestive of the following geometric problems:
\begin{enumerate}
\item How many right equivalence classes 
of isomers of $f$ exist other than $\check f$?
\item When are isomers non-congruent to each other? 
\item The existence of the isometric dual can be proved by 
applying the Cauchy-Kowalevski theorem.  So we need 
to assume that the given generalized cuspidal edges are real analytic.
It is then natural to ask if 
one can find a new method for constructing the isometric dual
in the $C^\infty$-differentiable category.
\item Can one extend isometric duality
to a much wider class, say, for swallowtails? 
\end{enumerate}
In this paper, we show the following:
\begin{itemize}
\item For a given generalized cuspidal edge 
$f\in \mc G^\omega_{**}(\R^2_J,\R^3,C)$,
there exists a unique generalized
cuspidal edge $f_*\in 
\mc G^\omega_{**}(\R^2_J,\R^3,C)$ (called the {\it inverse} of $f$)
having the same first fundamental form as $f$
along the space curve $\mb c(-u)$
whose cuspidal angle has the same sign as
that of $f$. Moreover, any isomers of $f$ are right equivalent to one of 
$\{f,\check f, f_*,\check f_*\}$ (see Theorem~{\rm II}),
where $\check f_*:=\mc I_C(f_*)$ is called the {\it inverse dual} of $f$.
\item The four maps $f,\,\,\check f,\,\,f_*,\,\,\check f_*$
are non-congruent in general. 
Moreover, the right equivalence classes and congruence classes 
of these four surfaces are determined 
in terms of the properties of $C$ 
and $ds^2_f$ (cf. Theorems~{\rm III} and {\rm IV}).
\item Suppose that the image of a $C^\infty$-differentiable 
cuspidal edge $f$ is invariant under a non-trivial
symmetry $T\in \op{SO}(3)$ (cf.~Definition \ref{def:S2}) of $\R^3$.
Then explicit construction of $\check f$
without use of the Cauchy-Kowalevski theorem is given
(see Example \ref{thm5}).
\end{itemize}
About the last question (4), 
the authors 
do not know whether the isomers of a given swallowtail 
will exist in general,
since the method given in this
paper does not apply directly.
So it left here as an open problem. 
(A possible isometric deformations of swallowtails are discussed in
authors' previous work \cite{HNUY}.)

The paper is organized as follows:
In Section 1, we explain our main results. 
In Section 2, we review the definition and properties of
Kossowski metrics.
In Section~3, we prove Theorem~{\rm I}
as a modification of the proof of \cite{NUY}.
In Section~4, we recall a representation formula for
generalized cuspidal edges given in Fukui \cite{F}, and prove 
Theorem~{\rm II}.  
In Section~5, we investigate the properties of
generic cuspidal edges with symmetries.
Moreover, we prove Theorems {\rm III} and {\rm IV}. 
Several examples are given in Section~6.
Finally, in the appendix, a representation formula
for generalized cusps in the Euclidean plane is given.

\medskip
\section{Results}

Let $ds^2$ be a $C^r$-differentiable positive semi-definite metric
on a $C^r$-differentiable $2$-manifold $M^2$.
A point $o\in M^2$ is called a {\it regular point}
of $ds^2$ if it is positive definite at $o$,
and is called a {\it singular point} (or a {\it semi-definite point})
if $ds^2$ is not positive definite at $o$.
Kossowski \cite{K} defined a certain kind of positive semi-definite
metrics called \lq\lq Kossowski metrics\rq\rq\ (cf. Section 2).
We let $ds^2$ be such a metric. Then for each singular point $o\in M^2$,
there exists a regular curve $\gamma:(-\epsilon,\epsilon)\to M^2$
such that $\gamma(0)=o$ and $\gamma$ parametrizes the singular set of $ds^2$
near $o$. Such a curve is called the {\it singular curve} of $ds^2$ near $o$.
In this situation, if $ds^2(\gamma'(0),\gamma'(0))$ does not vanish, 
then
we say that \lq\lq $ds^2$ is of type {\rm I} at $o$''.
The first fundamental forms (i.e. the induced metrics)
of germs of generalized 
cuspidal edges are Kossowski metrics of  
type {\rm I} (cf. Proposition \ref{lem:GK}).   

Setting $M^2:=(\R^2; u,v)$, we denote by $\mc K^{r}_{\rm I}(\R^2_o)$
the set of germs of $C^r$-Kossowski metrics 
of type {\rm I} at $o:=(0,0)$.
We fix such a $ds^2\in \mc K^r_{\rm I}(\R^2_o)$.
Then the metric is expressed as
$$
ds^2=Edu^2+2Fdudv+Gdv^2,
$$
and there exists a $C^r$-function $\lambda$
such that
$
EG-F^2=\lambda^2.
$
Let $K$ be the Gaussian curvature of $ds^2$
defined at points where $ds^2$ is positive definite.
Then 
\begin{equation}\label{eq:hK}
\hat K:=\lambda K
\end{equation}
can be considered as a $C^r$-differentiable 
function defined on a neighborhood $U(\subset \R^2)$
of $o$ (cf. \cite{MSUY,HNUY}).
If $\hat K$ vanishes 
(resp. does not vanish) at a singular
point $q\in U$ of $ds^2$, 
then $ds^2$ is said to be {\it parabolic}
(resp.~{\it non-parabolic}) at $q$  (see Definition~\ref{def:np}).
We denote by $\mc K^r_*(\R^2_o)$ (resp. $\mc K^r_p(\R^2_o)$)
the set of germs of non-parabolic (resp. parabolic)
$C^r$-Kossowski metrics of type {\rm I} at $o$.
The subset of $\mc K^r_p(\R^2_o)$ defined by
\begin{align*}
\mc K^r_{p,*}(\R^2_o)&:=\{ds^2\in \mc K^r_{p}(\R^2_o)\,;\, 
\hat K'(o)\ne 0\} \\
&\left(=\{ds^2\in \mc K^r_{\rm I}(\R^2_o)\,;\, 
\hat K(o)= 0,\,\, \hat K'(o)\ne 0\}\right)
\end{align*}
plays an important role in this paper, 
where $\hat K'=\partial \hat K/\partial u$.
Metrics belonging to $\mc K^r_{p,*}(\R^2_o)$ are called
{\it p-generic}.
On the other hand, if $\hat K$ vanishes identically along
the singular curve of $ds^2\in \mc K^r_{\rm I}(\R^2_o)$,
we call $ds^2$ an {\it asymptotic} Kossowski metric of type {\rm I}.
We let $\mc K^r_{a}(\R^2_o)$ be the set of
germs of such metrics.
This terminology comes from the following two facts:
\begin{itemize}
\item 
for a regular surface, a direction where the
normal curvature vanishes
is called an asymptotic direction, and
\item the induced metric of a
cuspidal edge whose limiting normal curvature $\kappa_\nu$ vanishes
identically along its singular set belongs to $\mc K^r_{a}(\R^2_o)$.
(Such a cuspidal edge is called an {\it asymptotic cuspidal edge},
see Proposition~\ref{prop:4b}.)
\end{itemize}
By definition, we have
\begin{align*}
&\mc K^r_{*}(\R^2_o)\cap \mc K^r_{p}(\R^2_o)=\emptyset,\qquad
\mc K^r_{*}(\R^2_o)\cup \mc K^r_{p}(\R^2_o)=\mc K^r_{\rm I}(\R^2_o), \\
&\mc K^r_{a}(\R^2_o)\subset \mc K^r_{p}(\R^2_o)\subset 
\mc K^r_{\rm I}(\R^2_o).
\end{align*}
For $ds^2\in \mc K^r_{a}(\R^2_o)$,
the Gaussian curvature $K$ can be extended on a neighborhood of
$o$ 
as a $C^r$-differentiable function.
Let $\eta\in T_o\R^2$ be the null vector at the singular point $o$ 
of the asymptotic Kossowski metric $ds^2$.
If
\begin{equation}\label{eq:Kv}
dK(\eta)(o)\ne 0,
\end{equation}
then $ds^2$ is said to be {\it a-generic}, 
and we denote by $\mc K^r_{a,*}(\R^2_o)(\subset \mc K^r_{a}(\R^2_o))$ 
the set of germs of a-generic asymptotic $C^r$-Kossowski metrics.
Considering the first fundamental 
form $ds^2_f$ of $f$, we can define a map 
\begin{equation}\label{eq:Jo}
\mc J_o:{\mc G}^r_*(\R^2_o,\R^3,C)
\ni f \mapsto ds^2_f \in \mc K^r_{\rm I}(\R^2_o).
\end{equation}

\medskip
\noindent
{\bf Theorem {\rm II}.}
{\it 
There exists an involution $($called the second involution$)$
$$
\mc I^*_C:{\mc G}^\omega_{**}(\R^2_J,\R^3,C)\ni f \mapsto f_*
\in {\mc G}^\omega_{**}(\R^2_J,\R^3,C)
$$
defined on ${\mc G}^\omega_{**}(\R^2_J,\R^3,C)$
 $($cf.\ \eqref{eq:admissible2}$)$
satisfying the following properties: 
\begin{enumerate}
\item $f_*$ has the same first fundamental form as $f$, and is a
non-faithful isomer of $f$,
\item $\mc I^*_C\circ \mc I_C=\mc I_C\circ \mc I^*_C$, 
where $\mc I_C$ is the first involution
as in Theorem {\rm I}.
\item 
Regarding $f$ and $f_*$ as
map germs at $o$ $($cf. Lemma \ref{lem:Red}$)$,  $\mc I^*_C$ canonically induces a map
\begin{equation}\label{eq:I*}
\mc I^*_o:{\mc G}^\omega_*(\R^2_o,\R^3,C)\ni f \mapsto f_*
\in {\mc G}^\omega_*(\R^2_o,\R^3,C)
\end{equation}
such that $\mc J_o\circ \mc I^*_o=\mc J_o$
and $\mc I^*_o\circ \mc I_o=\mc I_o\circ \mc I^*_o$.
\item Suppose that  $g$ belongs to ${\mc G}^\omega_{*}(\R^2_o,\R^3,C)$
$($resp. ${\mc G}^\omega_{**}(\R^2_J,\R^3,C))$.
If the first fundamental form of $g$
is isometric to that of $f$, then $g$
is right equivalent to 
one of $f,\check f,f_*$ and $\check f_*$.
\end{enumerate}}%

\medskip
Recently,
Fukui \cite{F} gave a representation formula for
generalized cuspidal edges along their edges in $\R^3$. 
(In \cite{F}, a similar formula 
for swallowtails is also given, 
although it is not applied in this paper.)
We denote by $C^r(\R_o)$ (resp.~$C^r(\R^2_o)$)
the set of $C^r$-function germs 
at the origin of $\R$ (resp.~$\R^2$).
We fix a generalized cuspidal edge $f\in {\mc G}^r(\R^2_o,\R^3,C)$
arbitrarily. The sectional cusp of $f$ at $\mb c(u)$ induces 
a function $\mu(u,t)\in C^r(\R^2_o)$ which 
is called the \lq\lq extended half-cuspidal curvature function''
giving the normalized curvature function of the sectional cusp at $\mb c(u)$
(see the appendix).
The value
\begin{equation}\label{eq:m0}
\kappa_c(u):=\frac{\mu(u,0)}2
\end{equation}
coincides with the cuspidal curvature at
the singular point of the sectional
cusp, and so it is called the {\it cuspidal curvature function} 
of $f$ (cf. \cite{MSUY}).
In Section 4, we give a Bj\"orling-type representation formula 
for cuspidal edges (cf. Proposition~\ref{prop:Bj}),
 which is a modification of the formula given in Fukui \cite{F}. 
(In fact, Fukui \cite{F} expressed the sectional cusp
as a pair of functions, but did not use the function $\mu$.)
Fukui \cite{F} explained several geometric invariants of
cuspidal edges in terms of $\kappa_s,\kappa_\nu$ and $\theta$.
In Section 4, using several properties of modified Fukui's formula
together with the proof of Theorem {\rm I},
we reprove the following assertion  
which determine the images of the maps
$\mc I_o$ and $\mc J_o$ (the assertions for the map $\mc I^*_o$
are not given in \cite{NUY,HNUY,HS}):

\begin{fact}\label{f2}
{\it The maps $\mc I_o$, $\mc I^*_o$ and $\mc  J_o$ $($cf. \eqref{eq:Io},
\eqref{eq:Jo} and \eqref{eq:I*}$)$
 satisfy the followings: 
\begin{enumerate}
\item[(1)] 
These two maps $\mc I_o$ and $\mc I^*_o$ are involutions on 
${\mc G}^\omega_{*,3/2}(\R^2_o,\R^3,C)$,
and $\mc J_o$ maps ${\mc G}^\omega_{*,3/2}(\R^2_o,\R^3,C)$ onto 
$\mc K^\omega_*(\R^2_o)$
$($cf.~\cite[Theorem 12]{NUY}$)$.
\item[(2)] 
The two maps $\mc I_o$ and $\mc I^*_o$ are involutions on
${\mc G}^\omega_{*,\tccr}(\R^2_o,\R^3,C)$,
and $\mc J_o$ maps ${\mc G}^\omega_{*,\tccr}(\R^2_o,\R^3,C)$
onto $\mc K^\omega_{p,*}(\R^2_o)$
$($cf.~\cite[Theorem A]{HNUY}$)$.
\item[(3)] 
The two maps $\mc I_o$ and $\mc I^*_o$ are involutions on
${\mc G}^\omega_{*,5/2}(\R^2_o,\R^3,C)$,
and $\mc J_o$ maps ${\mc G}^\omega_{*,5/2}(\R^2_o,\R^3,C)$ onto
$\mc K^\omega_{a,*}(\R^2_o)$
$($cf. \cite[Theorem 5.6]{HS}$)$. 
\end{enumerate}}
\end{fact}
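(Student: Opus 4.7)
The plan is to leverage Proposition~\ref{prop:Bj}, which provides the modified Fukui representation, together with the construction used in the proof of Theorem~{\rm I}. Since the assertions concerning $\mc I_o$ and $\mc J_o$ are already established in \cite{NUY, HNUY, HS}, the novel task is to check that each of the three subclasses is also preserved by the second involution $\mc I^*_o$, and to verify that all the statements assemble into a single uniform picture.

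First, I would recall from Proposition~\ref{prop:Bj} that a generic generalized cuspidal edge $f$ along $C$ is encoded, modulo right equivalence, by the triple $(\mb c,\theta,\mu)$, where $\theta(u)$ is the cuspidal angle and $\mu(u,t)$ is the extended half-cuspidal curvature function. The singularity type at $o$ is then detected by the order of vanishing of $\mu(u,0)=2\kappa_c(u)$: $f$ belongs to $\mc G^\omega_{*,3/2}(\R^2_o,\R^3,C)$ iff $\mu(0,0)\ne 0$, to $\mc G^\omega_{*,\tccr}(\R^2_o,\R^3,C)$ iff $\mu(0,0)=0$ and $\mu_u(0,0)\ne 0$, and to $\mc G^\omega_{*,5/2}(\R^2_o,\R^3,C)$ iff $\mu(u,0)\equiv 0$ together with the appropriate transverse non-degeneracy. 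By Theorem~{\rm II}, the involution $\mc I^*_o$ acts on this data as $(\mb c(u),\theta(u),\mu(u,t))\mapsto (\mb c(-u),\theta(-u),\mu_*(u,t))$ for some $\mu_*$ uniquely determined by the isometry condition, and by Theorem~{\rm I}, $\mc I_o$ sends $\theta\mapsto -\theta$ and solves a Cauchy-Kowalevski problem for a new $\mu$. Neither operation changes the order of vanishing of $\mu(\cdot,0)$ at the origin, so each of the three subclasses is preserved by both involutions.

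For the image characterization of $\mc J_o$, the key computation, carried out directly from Fukui's formula, yields an identity of the form $\hat K|_{v=0}=c(u)\,\kappa_\nu(u)\,\kappa_c(u)$ for a nowhere-vanishing analytic factor $c(u)$. Under the genericity assumption \eqref{eq:kks}, one has $\kappa_\nu(0)\ne 0$, so the three cases $\kappa_c(0)\ne 0$, $\kappa_c(0)=0$ with $\kappa_c'(0)\ne 0$, and $\kappa_c\equiv 0$ (together with $dK(\eta)(o)\ne 0$) translate precisely into the defining conditions of $\mc K^\omega_*(\R^2_o)$, $\mc K^\omega_{p,*}(\R^2_o)$, and $\mc K^\omega_{a,*}(\R^2_o)$ respectively. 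Surjectivity in each case is handled exactly as in \cite{NUY, HNUY, HS}: given a metric in the target class, the Cauchy-Kowalevski step from the proof of Theorem~{\rm I} recovers admissible Fukui data $(\kappa,\theta,\mu)$ of the required type, and Fukui's formula then realizes it. The main obstacle, and essentially the only substantive computational input, is establishing the identity $\hat K|_{v=0}\sim \kappa_\nu\kappa_c$ (and its refinement involving $dK(\eta)$ in the asymptotic case) in Fukui coordinates; once this identification is in hand, the preservation of the three subclasses under both involutions and the surjectivity of $\mc J_o$ onto each of the three Kossowski metric classes all follow uniformly.
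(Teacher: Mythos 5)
Your proposal is correct and follows essentially the same route as the paper: it reduces everything to the Fukui data via Proposition~\ref{prop:Bj}, uses the intrinsic identities $K_0=\mu_0\kappa_\nu$ (with $\kappa_\nu\neq 0$ by genericity) and, in the asymptotic case, $dK(\eta)=K_2=\kappa_\nu\mu_2$ together with the criteria of Propositions~\ref{prop:3} and \ref{prop:4} and Corollary~\ref{cor:crr} to match the singularity types with the classes $\mc K^\omega_*$, $\mc K^\omega_{p,*}$, $\mc K^\omega_{a,*}$, and then invokes the Cauchy--Kowalevski construction of Theorem~\ref{thm:S1} (as in \cite{NUY,HNUY,HS}) for surjectivity and for the preservation of each subclass by $\mc I_o$ and $\mc I^*_o$. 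This is exactly the argument given in the paper's proof of Fact~\ref{f2}.
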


We may assume that the origin $\mb 0$ is the midpoint of 
$C$, and give here the following terminologies:

\begin{defi}\label{def:S2}
The curve $C$ admits a {\it  symmetry} at $\vect{0}$
if there exists $T\in \op{O}(3)$ such that $T(C)=C$
and $T$ is not the identity.
Moreover, $T$ is said to be {\it trivial} if
$T(P)=P$ for all $P\in C$.
A symmetry of $C$ which is not trivial is called
a {\it non-trivial symmetry}.
(Obviously, each non-trivial symmetry reverses
the orientation of $C$.)
A non-trivial symmetry is called {\it positive} 
(resp. {\it negative}) 
if $T\in \op{SO}(3)$
(resp. $T\in \op{O}(3)\setminus \op{SO}(3)$).
\end{defi}

If $C$ lies in a plane, then there exists
a reflection $S\in \op{O}(3)$ with respect to the plane.
Then $S$ is a trivial symmetry of $C$.
We prove the following assertion.

\medskip
\noindent
{\bf Theorem {\rm III}.}
{\it
Let $f\in {\mc G}^\omega_{**, 3/2}(\R^2_J,\R^3,C)$, that is,
$f$ is admissible. 
Then the number of the right equivalence classes of
$f$, $\check f$, $f_*$ and
$\check f_*$ is four if and only if $ds^2_f$ has no 
symmetries
$($cf. Definition \ref{def:ds-sym2}$)$.}

\rm
\medskip
Moreover, we can prove the following:

\medskip
\noindent
{\bf Theorem {\rm IV}.}
{\it Let $f\in {\mc G}^\omega_{**,3/2}(\R^2_J,\R^3,C)$.
Then the number $N_f$ of 
the congruence classes of the images of
$f,\check f$, $f_*$ and $\check f_*$
satisfies the following properties: 
\begin{enumerate}
\item If $C$ has no non-trivial symmetries, and also $ds^2_f$ has no symmetries,
then $N_f=4$,
\item if not the case in {\rm (1)}, it holds that $N_f\le 2$,
\item $N_f=1$ if and only if 
\begin{enumerate}
\item $C$ lies in a plane and has a non-trivial symmetry, 
\item $C$ lies in a plane and
$ds^2_f$ has a symmetry, or
\item $C$ has a positive symmetry
and $ds^2_f$ also has a symmetry.
\end{enumerate}
\end{enumerate}
}

\section{Kossowski metrics}\label{sec1}

In this section, we quickly review several fundamental
properties of Kossowski metrics.

\begin{defi}
Let $p$ be a singular point of a given positive semi-definite
 metric $ds^2$ on $M^2$.
Then a non-zero tangent vector $\vect{v}\in T_pM^2$
is called a {\it null vector} if
\begin{equation}\label{eq:vv}
ds^2(\vect{v},\vect{v})=0.
\end{equation}
Moreover, a local coordinate neighborhood $(U;u,v)$ 
is called  {\it adjusted} at $p\in U$ if
$\partial_v:=\partial/\partial v$ gives a null vector
of $ds^2$ at $p$.
\end{defi}
  
It can be easily checked that
\eqref{eq:vv} implies that
 $ds^2(\vect{v},\vect{w})=0$
for all $\vect{w}\in T_pM^2$.
If $(U;u,v)$ is a local coordinate neighborhood 
adjusted at $p\in U$, then 
$
F(p)=G(p)=0
$ holds, where
\begin{equation}\label{eq:ds20}
  ds^2=E\,du^2+2F\,du\,dv+G\,dv^2.
\end{equation}

\begin{defi}\label{def:K}
A singular point $p\in M^2$ 
of a $C^r$-differentiable positive semi-definite metric
$ds^2$ on $M^2$ is called {\it K-admissible}
if there exists a local coordinate
neighborhood $(U;u,v)$ adjusted at $p$ satisfying
\begin{equation}\label{eq:d-flat}
  E_v(p)=2F_u(p),\qquad
  G_u(p)=G_v(p)=0,
\end{equation}
where $E,F,G$ are the $C^r$-functions on $U$
given in \eqref{eq:ds20}.
\end{defi}

If $ds^2_f$ is the induced metric of a $C^r$-map
$f:U\to \R^3$ and $f_v(p)=\vect{0}$, then 
\eqref{eq:d-flat} is satisfied automatically
(cf. Proposition \ref{lem:GK}).
The property \eqref{eq:d-flat} does not 
depend on the choice of a local coordinate system adjusted at 
$p$, as shown in \cite{K} and \cite[Proposition 2.7]{HHNSUY}.
In fact, a coordinate-free treatment for the K-admissibility
of singular points is given in \cite{K} and \cite[Definition 2.3]{HHNSUY}.

\begin{defi}\label{def:K2}
A positive semi-definite $C^r$-differentiable metric $ds^2$
is 
called a {\it Kossowski metric} if
each singular point $p\in M^2$
of $ds^2$ is {\rm K}-admissible
and there exists
a $C^r$-function $\lambda(u,v)$ 
defined on a local coordinate
neighborhood $(U;u,v)$ of $p$ 
such that
\begin{align}
\label{eq:k1}
&  
EG-F^2=\lambda^2 \qquad (\mbox{on $U$}), 
\\ \label{eq:k2} 
& 
(\lambda_u(p),\lambda_v(p))\ne (0,0),
\end{align}
where $E,F,G$ are $C^r$-functions on $U$
given in \eqref{eq:ds20}.
\end{defi}

The above function $\lambda$ is determined up to
$\pm$-ambiguity (see \cite[Proposition 3]{HNUY}).
We call such a $\lambda$ the {\it signed area density function}
of $ds^2$ with respect to the local coordinate 
neighborhood $(U;u,v)$.
The following fact is known (cf. \cite{K, SUY}).

\begin{fact}\label{lem:dA}
Let $ds^2$ be a $C^r$-differentiable 
Kossowski metric defined on a
domain $U$ of the $uv$-plane.
Then the $2$-form
$
d\hat A:=\lambda du\wedge dv
$
on $U$ is defined independently of the choice of 
adjusted local coordinates
$(u,v)$. 
\end{fact}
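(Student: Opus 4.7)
The plan is to verify coordinate-independence by a direct computation comparing two adjusted charts around a singular point, using the fact that the Gram determinant transforms by the square of the Jacobian and that the $2$-form $du\wedge dv$ transforms by the same Jacobian.

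First, I would take two adjusted local coordinate systems $(u,v)$ and $(\tilde u,\tilde v)$ around a common point $p\in U$, and set $J:=\det\bigl(\partial(\tilde u,\tilde v)/\partial(u,v)\bigr)$. Writing the metric in both charts as
\[
ds^2=E\,du^2+2F\,du\,dv+G\,dv^2=\tilde E\,d\tilde u^2+2\tilde F\,d\tilde u\,d\tilde v+\tilde G\,d\tilde v^2,
\]
the Gram matrices are related by conjugation with the Jacobian matrix of the transition, so their determinants satisfy $EG-F^2=J^2(\tilde E\tilde G-\tilde F^2)$. Applying \eqref{eq:k1} in both charts then yields $\lambda^2=J^2\tilde\lambda^2$, and by continuity of $\lambda$ and $\tilde\lambda$ there is a locally constant sign $\epsilon=\pm 1$ with $\lambda=\epsilon J\tilde\lambda$.

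Next, since $d\tilde u\wedge d\tilde v=J\,du\wedge dv$, combining the two displays gives
\[
\tilde\lambda\,d\tilde u\wedge d\tilde v=\tilde\lambda\,J\,du\wedge dv=\epsilon\,\lambda\,du\wedge dv.
\]
As noted after Definition \ref{def:K2}, the signed area density is determined only up to a global sign; hence we are free to choose the sign of $\tilde\lambda$ so that $\epsilon=+1$, after which the two expressions for $d\hat A$ agree on the overlap. Patching along a cover by adjusted charts then produces a globally well-defined $2$-form, since the sign choices can be propagated compatibly.

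The main obstacle is purely bookkeeping of the sign ambiguity: one must verify that the two independent $\pm$-choices — the sign of $\lambda$ on each chart, and the locally constant sign $\epsilon$ coming from $\lambda^2=J^2\tilde\lambda^2$ — can be matched simultaneously across all overlaps. This reduces to a cocycle check that is automatic because $\epsilon$ is locally constant and $\lambda$ itself is $C^r$, so once signs are fixed on one chart of a connected cover, they are forced on every other chart by the requirement $\epsilon=+1$.
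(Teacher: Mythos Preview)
The paper does not actually prove this statement: it is recorded as a \emph{Fact} with a reference to \cite{K,SUY}, and no argument is given in the text. Your direct verification via the transformation law for the Gram determinant is the standard one and is correct.

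Two minor remarks. First, nothing in your computation uses the ``adjusted'' hypothesis on the coordinates; the identity $\lambda^2=J^2\tilde\lambda^2$ and hence $\tilde\lambda\,d\tilde u\wedge d\tilde v=\pm\lambda\,du\wedge dv$ hold for arbitrary $C^r$ coordinate changes, which is exactly what one wants. Second, the closing paragraph about a cocycle check over a cover is more than is needed here: since $U$ is by hypothesis a domain in the $uv$-plane, there is already a single global chart, so one global choice of sign for $\lambda$ suffices and no patching is required. Your argument would be the right one on a general $2$-manifold, but in the present setting it can be shortened to a single sentence.
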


We call $d\hat A$ the {\it signed area form} of $ds^2$. 
Let $K$ be the Gaussian curvature 
defined on the complement of the singular set
of $ds^2$. 

\begin{fact}[{\cite{K} and \cite[Theorem 2.15]{HHNSUY}}]\label{lem:KdA}
The $2$-form
$
\Omega:=K d\hat A
$
can be extended as a $C^r$-differential form
on $U$. 
\end{fact}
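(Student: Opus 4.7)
The plan is to reduce the assertion in coordinates to the already-recorded $C^r$-extendability of the function $\hat K = \lambda K$, and then verify coordinate-invariance via Fact \ref{lem:dA}.

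First, I would fix a singular point $p \in U$ and choose a local coordinate neighborhood $(U; u, v)$ adjusted at $p$, so that the K-admissibility conditions \eqref{eq:d-flat} hold. On the complement of the singular set $\{\lambda = 0\}$, both $K$ and $d\hat A = \lambda\, du \wedge dv$ are smooth, and one can write
\begin{equation*}
\Omega = K\, d\hat A = (K\lambda)\, du \wedge dv = \hat K \, du \wedge dv.
\end{equation*}
Since $\hat K = \lambda K$ was already recalled (with references to \cite{MSUY,HNUY}) to admit a $C^r$-extension to all of $U$, the right-hand side is a $C^r$ 2-form defined on the entire coordinate neighborhood; this gives the sought extension of $\Omega$ from the regular set to $U$ in the chosen chart.

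Next, I would verify that the extension does not depend on the choice of adjusted coordinates. This follows because $d\hat A$ is coordinate-invariant by Fact \ref{lem:dA}, and because $K$ is intrinsically defined on the regular set from the metric alone. Consequently, the $C^r$-extensions of $\hat K\,du \wedge dv$ obtained in two overlapping adjusted charts must coincide on the dense regular subset and hence on all of $U$, which promotes the local $C^r$-extension to a well-defined global $C^r$ 2-form on $U$.

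The hard step is the $C^r$-extendability of $\hat K$ itself; this is where Kossowski's axioms do the real work. I would approach it through Brioschi's formula $K(EG - F^2)^2 = \Phi$, with $\Phi$ a universal polynomial in $E,F,G$ and their first two derivatives, so that $K\lambda = \Phi/\lambda^{3}$. Using the perfect-square identity $EG - F^2 = \lambda^2$ together with \eqref{eq:d-flat} and differentiating up to the necessary order, one compares Taylor coefficients at $p$: the vanishings $F(p) = G(p) = G_u(p) = G_v(p) = 0$ and the relation $E_v(p) = 2F_u(p)$ should force $\Phi$ to vanish to third order along the regular curve $\{\lambda = 0\}$ (which is regular by \eqref{eq:k2}). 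Establishing this divisibility of $\Phi$ by $\lambda^{3}$ in the $C^r$-sense is the main technical obstacle; it is a direct but delicate Taylor computation, and it is precisely the content that justifies Kossowski's choice of defining conditions. Once that divisibility is in hand, $K\lambda = \Phi/\lambda^{3}$ extends to a $C^r$-function on $U$ and the argument above delivers the desired conclusion.
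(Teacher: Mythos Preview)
The paper does not supply a proof of this statement; it is recorded as a Fact with citations to \cite{K} and \cite[Theorem~2.15]{HHNSUY}, so there is no argument in the present paper to compare yours against.

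That said, a remark on the logic of your proposal: your first move --- writing $\Omega=\hat K\,du\wedge dv$ and then invoking the $C^r$-extendability of $\hat K$ as ``already recalled'' from Section~1 --- is not an independent step. Since $d\hat A=\lambda\,du\wedge dv$ is $C^r$ by Fact~\ref{lem:dA}, the extendability of $\hat K=\lambda K$ and the extendability of $\Omega$ are literally the same assertion; the passage in Section~1 citing \cite{MSUY,HNUY} is itself just another invocation of the same underlying result. So this reduction is correct but tautological.

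The actual content is in your final paragraph, and there your outline is the right one and is in the spirit of what the cited references do. Brioschi's formula gives $K\lambda^4=\Phi$ with $\Phi$ polynomial in $E,F,G$ and their partial derivatives up to second order; the K-admissibility conditions \eqref{eq:d-flat}, the vanishing $F(p)=G(p)=0$, and the factorization $EG-F^2=\lambda^2$ are exactly the inputs needed to show $\Phi$ is divisible (in the $C^r$ sense) by $\lambda^3$ along the regular curve $\{\lambda=0\}$ guaranteed by \eqref{eq:k2}. You have correctly isolated both the mechanism and the reason Kossowski's axioms are calibrated as they are; filling in that Taylor/divisibility computation is the whole proof.
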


\begin{defi}\label{def:np}
We call $\Omega$ the {\it Euler form} of $ds^2$. 
If $\Omega$ vanishes (resp. does not vanish)
at a singular point $p\in U$ of $ds^2$,
then $p$ is called a {\it parabolic point}
(resp. {\it non-parabolic point}).
\end{defi}

The following fact is also known (cf. \cite{K, HHNSUY, HNUY}).

\begin{fact}\label{lem:null-add}
Let $p$ be a singular point of a Kossowski metric
$ds^2$. Then the null space
$($i.e. the subspace generated by null vectors at $p)$ 
of $ds^2$ is $1$-dimensional.\end{fact}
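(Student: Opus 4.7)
The plan is to work in an adjusted coordinate system $(U;u,v)$ at $p$, so that $\partial_v$ is a null vector and therefore $F(p)=G(p)=0$. Since $\partial_v\in T_pM^2$ already spans a one-dimensional subspace of the null space, it suffices to rule out the possibility that the null space is two-dimensional, i.e., that $ds^2$ vanishes entirely at $p$. Equivalently, I will show by contradiction that $E(p)\ne 0$.

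Assume towards contradiction that $E(p)=0$, so that $E(p)=F(p)=G(p)=0$ and consequently $\lambda(p)=0$ by \eqref{eq:k1}. Using the K-admissibility conditions \eqref{eq:d-flat}, in particular $G_u(p)=G_v(p)=0$, I will differentiate the identity $EG-F^2=\lambda^2$ twice and evaluate at $p$. The key computations are:
\begin{align*}
\partial_{uu}(EG-F^2)\bigr|_p &= -2F_u(p)^2 = 2\lambda_u(p)^2,\\
\partial_{vv}(EG-F^2)\bigr|_p &= -2F_v(p)^2 = 2\lambda_v(p)^2,
\end{align*}
where all terms involving $E$, $F$, $G$ themselves or the vanishing first derivatives $G_u(p), G_v(p)$ drop out at $p$. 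Since the left-hand sides are non-positive and the right-hand sides are non-negative, both must equal zero, forcing $\lambda_u(p)=\lambda_v(p)=0$. This contradicts the non-degeneracy condition \eqref{eq:k2}, completing the proof.

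The main (and essentially only) obstacle is to be careful that the computation is coordinate-invariant, so that the conclusion does not depend on the particular adjusted coordinate system chosen. This is justified by the coordinate-independence of K-admissibility and of the non-vanishing of $(\lambda_u,\lambda_v)$ up to sign (cf.\ \cite{K}, \cite[Proposition 2.7]{HHNSUY}, \cite[Proposition 3]{HNUY}). No further analytic machinery is required; the assertion is a direct algebraic consequence of \eqref{eq:d-flat}, \eqref{eq:k1} and \eqref{eq:k2}.
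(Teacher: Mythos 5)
Your proposal is correct. Note that the paper itself does not prove this statement: it is quoted as a known Fact with references to \cite{K}, \cite{HHNSUY} and \cite{HNUY}, so there is no in-paper argument to compare against. Your computation is a valid, self-contained and elementary derivation: in an adjusted coordinate system at $p$ one has $F(p)=G(p)=0$, and if the null space were $2$-dimensional then also $E(p)=0$ and hence $\lambda(p)=0$ by \eqref{eq:k1}; differentiating $EG-F^2=\lambda^2$ twice, the terms $E_{uu}G$, $EG_{uu}$, $FF_{uu}$ (and their $v$-analogues) die because $E(p)=F(p)=G(p)=0$, while $2E_uG_u$ and $2E_vG_v$ die by $G_u(p)=G_v(p)=0$ from \eqref{eq:d-flat}, leaving exactly $-2F_u(p)^2=2\lambda_u(p)^2$ and $-2F_v(p)^2=2\lambda_v(p)^2$; the sign comparison forces $\lambda_u(p)=\lambda_v(p)=0$, contradicting \eqref{eq:k2}. (In fact you only use $G_u(p)=G_v(p)=0$, not $E_v(p)=2F_u(p)$; alternatively, positive semi-definiteness gives $E_u(p)=E_v(p)=0$ when $E(p)=0$, so the argument would go through even without invoking K-admissibility.) The coordinate issue you flag is genuinely harmless: since $\lambda(p)=0$ at a singular point and $\lambda$ transforms under a coordinate change by multiplication by the nonvanishing Jacobian determinant, the nonvanishing of $(\lambda_u(p),\lambda_v(p))$ is independent of the chosen chart, so you may indeed run the computation in the adjusted coordinates of \eqref{eq:d-flat}.
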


By applying the implicit function theorem
for $\lambda$ (cf. \eqref{eq:k2}),
there exists a regular curve
$\gamma(t)$ $(|t|<\varepsilon)$ in the $uv$-plane 
(called the {\it singular curve})
parametrizing the singular set of $ds^2$
such that $\gamma(0)=p$.
Then there exists a $C^r$-differentiable non-zero vector field $\eta(t)$
along $\gamma(t)$ which points in the null direction of
the metric $ds^2$.
We call $\eta(t)$ a {\it null vector field} along 
the singular curve $\gamma(t)$.

\begin{defi}[{\cite{HHNSUY}}]
\label{def:a2a3add}
A singular point $p\in M^2$ of a
Kossowski metric $ds^2$ is said to be
of {\it type I}
or an {\it $A_2$ point}
if the derivative $\gamma'(0)$
of the singular curve at $p$
(called the {\em singular direction} at $\gamma(t)$) is linearly independent
of the null vector $\eta(0)$.
Moreover, $ds^2$ is called  
of {\it type I} if all of the singular points of $ds^2$
are of type I.
\end{defi}

\section{Generalized cuspidal edges}\label{sec-PI}

Fix a bounded closed interval $J(\subset \R)$ and
consider a $C^r$-embedding $\mb c:J\to \R^3$
with arc-length parameter.
We assume that the curvature function $\kappa(u)$
of $\mb c(u)$ is positive everywhere.
We fix a $C^r$-map $\tilde f:\tilde U\to \R^3$
defined on
a domain $\tilde U$ in the $xy$-plane $\R^2$ 
containing $J_1\times \{0\}$
such that
each point of $J_1\times \{0\}$
is a generalized cuspidal edge point
and 
$$
\tilde f(J_1\times \{0\})=C \qquad (C:=\mb c(J)),
$$
where $J_1$ is a bounded closed interval
in $\R$. Such an $\tilde f$ is called
a {\it generalized cuspidal edge along $C$.}
For such an $\tilde f$,
there exists a diffeomorphism
$$
\phi:U\ni (u,v)\mapsto (x(u,v),y(u,v))\in \phi(U)(\subset \tilde U)
$$
such that 
\begin{equation}\label{eq:F0}
f(u,v):=\tilde f(x(u,v),y(u,v))
\end{equation}
satisfies
\begin{equation}\label{eq:F}
f(u,v)=\mb c(u)+\frac{v^2}2 \hat \xi(u,v),
\end{equation}
where
$
\hat \xi(u,0)
$
gives a vector field along $\mb c$ which is
linearly independent of $\mb c'(u)$.

\begin{proposition}\label{lem:GK}
The induced metrics of $C^r$-differentiable 
generalized cuspidal edges 
are $C^r$-differentiable
Kossowski metrics whose singular points are
of type I.
\end{proposition}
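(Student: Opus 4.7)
The plan is to work directly in the normal form \eqref{eq:F}, which the text has already arranged for us: after the coordinate change \eqref{eq:F0}, we may assume
\[
f(u,v)=\mb c(u)+\tfrac{v^2}{2}\hat\xi(u,v),\qquad
\hat\xi(u,0) \text{ linearly independent of } \mb c'(u).
\]
In these coordinates, $f_u(u,0)=\mb c'(u)$ and $f_v(u,0)=\vect{0}$, so the singular set of $f$ coincides with $\{v=0\}$ and $\partial_v$ is a null direction at every singular point. Hence $(u,v)$ is adjusted along the singular curve, and the type I condition is immediate: the singular curve $u\mapsto(u,0)$ has tangent $\partial_u$, which is linearly independent of the null vector $\partial_v$.

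Next I would verify K-admissibility \eqref{eq:d-flat} at each $p=(u_0,0)$ by a direct differentiation of $E=\inner{f_u}{f_u}$, $F=\inner{f_u}{f_v}$, $G=\inner{f_v}{f_v}$, using that both $f_v$ and $f_{uv}$ vanish along $\{v=0\}$ (the latter since $f_{uv}=v\hat\xi_u+\tfrac{v^2}{2}\hat\xi_{uv}$). This gives $E_v(p)=0=2F_u(p)$ and $G_u(p)=G_v(p)=0$ at once.

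The only non-formal step is producing the $C^r$ signed area density $\lambda$ of Definition \ref{def:K2}. I would factor
\[
f_u\times f_v
= v\,\Xi(u,v),\qquad
\Xi(u,v):=\Bigl(\mb c'(u)+\tfrac{v^2}{2}\hat\xi_u\Bigr)\times\Bigl(\hat\xi+\tfrac{v}{2}\hat\xi_v\Bigr),
\]
where $\Xi(u,0)=\mb c'(u)\times\hat\xi(u,0)\neq\vect{0}$ by the linear independence hypothesis. Thus $|\Xi|^2$ is a strictly positive $C^r$-function on a neighborhood of $J\times\{0\}$, and its square root $|\Xi|$ is $C^r$ there. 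Setting $\lambda:=v\,|\Xi(u,v)|$ gives a $C^r$-function satisfying $EG-F^2=|f_u\times f_v|^2=\lambda^2$ and $\lambda_v(u,0)=|\Xi(u,0)|\neq 0$, which establishes \eqref{eq:k1} and \eqref{eq:k2}.

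The only potential obstacle is the smoothness of $|\Xi|$, but since $\Xi$ is a nowhere vanishing $C^r$ vector field along the singular curve this is resolved by a neighborhood argument. Everything else is a routine chain of derivative computations at $v=0$.
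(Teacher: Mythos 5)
Your proposal is correct and follows essentially the same route as the paper's proof: work in the normal form \eqref{eq:F}, check \eqref{eq:d-flat} from $f_v(u,0)=\vect{0}$, factor $f_u\times f_v=v\,\Xi$ and set $\lambda=v|\Xi|$ (the paper's $\lambda_0=|\Xi|$ with $\lambda_0(u,0)\ne 0$), and conclude type I since $\partial_v$ spans the null direction and is independent of the singular direction $\partial_u$. The extra details you supply (the explicit verification of K-admissibility via $f_{uv}(u,0)=\vect{0}$ and the smoothness of $|\Xi|$ from its nonvanishing) are exactly the steps the paper leaves as "easily checked."
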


\begin{proof}
Let $f$ be a generalized cuspidal edge as in \eqref{eq:F},
and let
$
ds^2_f=Edu^2+2Fdudv+Gdv^2
$
be the first fundamental form of $f$.
Then
$$
E=f_u\cdot f_u,\quad F=f_u\cdot f_v,\quad G:=f_v\cdot f_v 
$$
hold, where \lq\lq$\cdot$\rq\rq\ is the inner product of $\R^3$. 
Since $f_v(u,0)=\mb 0$, one can easily check \eqref{eq:d-flat}.
By \eqref{eq:F}, we have
$$
EG-F^2=|f_u\times f_v|^2
=v^2\left |
\left(\mb c'+\frac{v^2}2 \hat \xi_u\right)
\times \left(\hat \xi+\frac{v}2 \hat \xi_v\right)\right|^2,
$$
where $\times$ denotes the cross product in $\R^3$.
Since two vectors $\mb c'(u),\,\,\hat \xi(u,0)$ are linearly independent,
the function $\lambda$ on $U$ given by
\begin{equation}\label{eq:l0}
\lambda:=v \lambda_0,\qquad
\lambda_0:=\left|
\left(\mb c'+\frac{v^2}2 \hat \xi_u\right)
\times \left(\hat \xi+\frac{v}2 \hat \xi_v\right)
\right|
\end{equation}
is $C^r$-differentiable and $\lambda_0(u,0)\ne 0$. 
Moreover, $\lambda^2$ coincides with $EG-F^2$. 
Since $\lambda_v\ne 0$, $ds^2_f$ is a Kossowski metric.
Since $f_v(u,0)=\mb 0$, $\partial_v:=\partial/\partial v$ gives
the null-direction, which is linearly independent
of the singular direction $\partial_u$.
So all singular points of $ds^2_f$ are of type {\rm I}.
\end{proof}

Let $ds^2_f$ be the induced metric of $C^r$-differentiable 
generalized cuspidal edge $f\in \mc G^r(\R^2_J,\R^3,C)$.  
We set
$\hat K(:=\lambda K)$ {\rm (cf. \eqref{eq:hK})},
where $K$ is the Gaussian curvature of $ds^2_f$ defined
at points where $ds^2_f$ is positive definite.
As mentioned in the introduction, $\hat K$
can be extended as a $C^r$-function on $U$.
Moreover, $\check K:=v K$ also can be considered 
as a $C^r$-function on $U$ (cf. \cite{MSUY,HNUY}).

\begin{corollary}\label{cor:crr}
The following assertions hold:
\begin{enumerate}
\item $\hat K(u,0)\ne 0$  if and only if $\check K(u,0)\ne 0$, and
\item    
$\hat K_u(u,0)\ne 0$  if and only if $\check K_u(u,0)\ne 0$,
under the assumption $\hat K(u,0)=0$.
\end{enumerate}
\end{corollary}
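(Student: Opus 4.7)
The plan is to exploit the explicit factorization of the signed area density $\lambda$ obtained in the proof of Proposition~\ref{lem:GK}. Recall from \eqref{eq:l0} that, in the adapted coordinates $(u,v)$ of \eqref{eq:F}, one has
\begin{equation*}
\lambda=v\,\lambda_0,\qquad
\lambda_0(u,0)=|\mb c'(u)\times \hat\xi(u,0)|\ne 0,
\end{equation*}
since $\mb c'(u)$ and $\hat\xi(u,0)$ are linearly independent along $J$. Multiplying by $K$ gives
\begin{equation*}
\hat K=\lambda K=v\lambda_0 K=\lambda_0\,\check K
\end{equation*}
as a relation between $C^r$-functions on $U$ (each side being $C^r$ because $\hat K$ is, by Fact~\ref{lem:KdA}, while the other factorization shows the same for $\check K$).

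For part (1), since $\lambda_0(u,0)\ne 0$, the equality $\hat K(u,0)=\lambda_0(u,0)\check K(u,0)$ yields the equivalence $\hat K(u,0)\ne 0\Leftrightarrow \check K(u,0)\ne 0$ immediately. For part (2), differentiating with respect to $u$ gives
\begin{equation*}
\hat K_u=\lambda_{0,u}\,\check K+\lambda_0\,\check K_u,
\end{equation*}
and under the hypothesis $\hat K(u,0)=0$, equivalently $\check K(u,0)=0$ by part (1), the first summand vanishes at $v=0$, so
\begin{equation*}
\hat K_u(u,0)=\lambda_0(u,0)\,\check K_u(u,0).
\end{equation*}
Again using $\lambda_0(u,0)\ne 0$ finishes the argument.

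There is essentially no obstacle here: everything reduces to the factorization $\lambda=v\lambda_0$ with $\lambda_0$ nowhere vanishing on the singular curve, which was already established. The only point worth making carefully is that the formula \eqref{eq:l0} is written in the particular coordinate system from \eqref{eq:F}, so one should observe that both $\hat K$ and $\check K$ are well defined in these coordinates (the former intrinsically, the latter because $v=0$ describes the singular set in these coordinates), making the identity $\hat K=\lambda_0\check K$ meaningful pointwise along $J\times\{0\}$.
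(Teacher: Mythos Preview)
Your proof is correct and follows essentially the same approach as the paper's own argument: both use the factorization $\lambda=v\lambda_0$ from \eqref{eq:l0} to obtain $\hat K=\lambda_0\check K$, then read off part~(1) from $\lambda_0(u,0)\ne0$ and part~(2) by differentiating in $u$ and using part~(1) to kill the $(\lambda_0)_u\check K$ term along $v=0$. Your added remarks on the $C^r$-regularity of $\check K$ and on the coordinate dependence are reasonable clarifications but not required for the argument.
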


\begin{proof}
By \eqref{eq:l0}, we have the expression
$\lambda=v \lambda_0$, where $\lambda_0(u,0)\ne 0$.
So if we set $\check K=v K$, then 
$\hat K=\lambda_0 \check K$,
and 
$
\hat K(u,0)=\lambda_0(u,0) \check K(u,0)
$
hold, and so the first assertion is obvious.
Differentiating $\hat K=\lambda_0 \check K$, we have
$$
\hat K_u=(\lambda_0)_u \check K+\lambda_0 \check K_u.
$$
Since $\hat K(u,0)=0$ implies $\check K(u,0)=0$, 
we have
$
\hat K_u(u,0)=\lambda_0(u,0) \check K_u(u,0),
$
proving the second assertion.
\end{proof}

\begin{remark}
For a generalized cuspidal edge $f$,
$$
\nu(u,v):=
\frac{(2\mb c'(u)+v^2\hat \xi_u(u,v))\times (2\hat \xi(u,v)+v \hat \xi_v(u,v))}
{|(2\mb c'(u)+v^2 \hat \xi_u(u,v))\times (2\hat \xi(u,v)+v \hat \xi_v(u,v)) |}
$$
gives a $C^r$-differentiable unit normal vector field on $U$.
So $f$ is a frontal map.
\end{remark}

\begin{defi}\label{def:adapted}
A parametrization $(u,v)$ of $f\in \mc G^r(\R^2_J,\R^3,C)$
is called an {\it adapted coordinate system} (cf. \cite[Definition 3.7]{MSUY}) 
if
\begin{enumerate}
\item $f_v(u,0)=\mb 0$ and $|f_u(u,0)|=|f_{vv}(u,0)|=1$ along the $u$-axis, 
\item $f_{vv}(u,0)$  is perpendicular to $f_u(u,0)$.
\end{enumerate}
\end{defi}

To show the existence of an adapted coordinate system, we prepare
the following under the assumption that the curve $\mb c(u)$
is real analytic:

\begin{lemma}[{\cite[Proposition 6]{HNUY}}]\label{fact:coord}
Let $ds^2$ be a $C^\omega$-differentiable 
Kossowski metric defined on 
an open subset $U(\subset \R^2)$.
Suppose that $\gamma:J\to U$
is a real analytic singular curve with respect to $ds^2$ such that 
\begin{equation}\label{eq:3-5}
ds^2(\gamma'(t),\gamma'(t))>0 \qquad (t\in J).
\end{equation}
Then, for each $t_0\in J$,
there exists a $C^\omega$-differentiable 
local coordinate system $(V;u,v)$ 
containing $(t_0,0)$ such that $V\subset U$ and 
the coefficients $E,F,G$ of the first fundamental 
form $ds^2=Edu^2+2Fdudv+Gdv^2$ satisfy
the following three conditions:
\begin{enumerate}
\item $\gamma(u)=(u,0)$, 
$E(u,0)=1$ and $E_v(u,0)=0$ hold along the $u$-axis,
\item $F(u,v)=0$ on $V$, and
\item there exists a $C^\omega$-function $G_0$ 
      defined on $V$ such that
      $G(u,v)=v^2 G_0(u,v)/2$ and $G_0(u,0)=2$.
\end{enumerate}
\end{lemma}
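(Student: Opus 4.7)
The plan is to reach the normal form through three successive real-analytic coordinate changes near $\gamma(t_0)$.

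I would first normalize $\gamma$. Condition \eqref{eq:3-5} ensures that $\gamma'$ is not null, so $\gamma$ admits an arc-length reparametrization, which after a shift can be taken so that the value of the new parameter at $\gamma(t_0)$ equals $t_0$. Choose initial analytic coordinates $(x,y)$ near $\gamma(t_0)$ with $\gamma(u)=(u,0)$, so that $E(u,0)=1$ along the $u$-axis. Because the null space at each singular point is one-dimensional (Fact \ref{lem:null-add}) and $\gamma'$ is non-null, the null direction along $\gamma$ is transverse to $\gamma'$, and I can choose the $y$-coordinate so that $\partial_y|_{(u,0)}$ is the null vector. In these coordinates, $F(u,0)=G(u,0)=0$. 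A further rescaling $y\mapsto y/\alpha(u)$, with $\alpha$ analytic and chosen so that the new signed area density satisfies $\lambda_y(u,0)=1$ (possible since \eqref{eq:k2} forces $\lambda_y(u,0)\neq 0$), preserves these equalities while fixing the derivative of $\lambda$ transverse to $\gamma$.

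The central step is to arrange $F\equiv 0$ on a full neighborhood. Keeping $v=y$ and seeking $x=x(u,v)$ with $x(u,0)=u$, the condition $F\equiv 0$ becomes the transport equation $x_v=-F_0/E_0$, which is an analytic ODE in $v$ parametrized by $u$ and admits an analytic solution by the standard existence theorem. The subtle point is that this coordinate change automatically preserves $E_v(u,0)=0$: a direct computation yields
\[
E_v(u,0) \;=\; E_y^{\mathrm{old}}(u,0) \;-\; 2F_x^{\mathrm{old}}(u,0),
\]
which vanishes by the K-admissibility identity \eqref{eq:d-flat}. Thus conditions (1) and (2) are achieved. For condition (3), once $F\equiv 0$ we have $EG=\lambda^2$; by the invariance of the signed area form $d\hat A$ (Fact \ref{lem:dA}), the normalization $\lambda_v(u,0)=1$ persists, so $\lambda=v\lambda_0(u,v)$ with $\lambda_0(u,0)=1$. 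Hence $G=\lambda^2/E=v^2\lambda_0^2/E$, and setting $G_0:=2\lambda_0^2/E$ yields $G=v^2 G_0/2$ with $G_0(u,0)=2$.

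The main obstacle I expect is the cancellation giving $E_v(u,0)=0$ after the $F\equiv 0$ change: this is the sole place where the Kossowski K-admissibility identity \eqref{eq:d-flat} plays a decisive role, and the coefficient matching between the derivatives of the inverse coordinate change and the original metric has to be carried out precisely. All other steps reduce to analytic ODE existence together with the algebraic identity $EG=\lambda^2$ valid whenever $F\equiv 0$.
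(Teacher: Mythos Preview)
The paper does not prove this lemma at all: its entire ``proof'' is the single sentence invoking \cite[Proposition~6]{HNUY}. Your proposal therefore goes well beyond what the paper provides, supplying a self-contained construction, and your argument is correct.

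Your three-stage scheme---arc-length along $\gamma$ with the null direction as $\partial_y$, a rescaling of $y$ to normalize $\lambda_y(u,0)=1$, then the transport ODE $x_v=-F_0/E_0$ to kill $F$---is the natural route when the metric degenerates and Fermi/geodesic coordinates are unavailable. The identity you highlight, $E_v^{\mathrm{new}}(u,0)=E_y^{\mathrm{old}}(u,0)-2F_x^{\mathrm{old}}(u,0)$, is correct (it follows from $E^{\mathrm{new}}=E^{\mathrm{old}}x_u^2$ together with $x_{uv}(u,0)=-(F^{\mathrm{old}})_x(u,0)$), and it vanishes by K-admissibility \eqref{eq:d-flat} exactly as you say. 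In fact in your own setup the cancellation is even simpler: since you already arranged $F^{\mathrm{old}}(x,0)\equiv 0$, one has $F_x^{\mathrm{old}}(u,0)=0$ tautologically, and then \eqref{eq:d-flat} forces $E_y^{\mathrm{old}}(u,0)=0$ directly. The derivation of condition~(3) from $EG=\lambda^2$ and the persistence of the normalization $\lambda_v(u,0)=1$ through the last coordinate change (via the invariance of $d\hat A$, Fact~\ref{lem:dA}) are both sound. What your approach buys is self-containment; the paper simply trades that for brevity and an external citation.
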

 
\begin{proof}
Applying \cite[Proposition 6]{HNUY} at the point $(t_0,0)$ on a
singular curve of $ds^2$, we obtain the desired local coordinate system.
\end{proof}

\begin{corollary}
For each generalized cuspidal edge $f\in \mc G^\omega(\R^2_J,\R^3,C)$ along $C$
and for each singular point $p$ of $f$,
there exists a local coordinate neighborhood $(V;u,v)$ of $p$
such that the restriction $f|_V$ of $f$
is parametrized by an adapted coordinate system.
\end{corollary}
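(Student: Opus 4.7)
The plan is to apply Lemma \ref{fact:coord} to the induced metric $ds^2_f$ and then translate the metric conditions into the geometric conditions of Definition \ref{def:adapted}. First, by Proposition \ref{lem:GK}, the induced metric $ds^2_f$ is a real analytic Kossowski metric whose singular points are of type I. The singular curve is parametrized by the edge curve $\mb c$, which is a regular $C^\omega$-embedding, so the hypothesis \eqref{eq:3-5} is satisfied because $ds^2_f(\mb c'(t),\mb c'(t))=|\mb c'(t)|^2>0$.

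Next, I apply Lemma \ref{fact:coord} at the given singular point $p$ to obtain a real analytic coordinate neighborhood $(V;u,v)$ in which the coefficients of $ds^2_f=E\,du^2+2F\,du\,dv+G\,dv^2$ satisfy $E(u,0)=1$, $E_v(u,0)=0$, $F\equiv 0$ on $V$, and $G(u,v)=v^2G_0(u,v)/2$ with $G_0(u,0)=2$.

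Then I read off the conditions (1) and (2) of Definition \ref{def:adapted} from these metric coefficients. Since $G(u,0)=0$ gives $|f_v(u,0)|^2=0$, we have $f_v(u,0)=\mb 0$. The condition $E(u,0)=1$ gives $|f_u(u,0)|=1$. Computing $G_{vv}=2f_{vv}\cdot f_{vv}+2f_v\cdot f_{vvv}$ and evaluating at $v=0$ using $f_v(u,0)=\mb 0$, we obtain $G_{vv}(u,0)=2|f_{vv}(u,0)|^2$; on the other hand, from $G=v^2G_0/2$ and $G_0(u,0)=2$ we get $G_{vv}(u,0)=2$, whence $|f_{vv}(u,0)|=1$. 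Finally, differentiating $F=f_u\cdot f_v\equiv 0$ in $v$ yields $f_{uv}\cdot f_v+f_u\cdot f_{vv}=0$, and setting $v=0$ gives $f_u(u,0)\cdot f_{vv}(u,0)=0$, which is condition (2).

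Since each step is a direct application of Lemma \ref{fact:coord} together with a short differential calculation, there is no real obstacle. The only point requiring mild care is the derivation of $|f_{vv}(u,0)|=1$ from the expression $G=v^2G_0/2$, which rests on differentiating $G=f_v\cdot f_v$ twice and using $f_v(u,0)=\mb 0$ to kill the cross term.
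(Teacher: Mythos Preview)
Your proof is correct and follows essentially the same route as the paper: apply Lemma~\ref{fact:coord} to the first fundamental form $ds^2_f$ (which is a Kossowski metric of type~I by Proposition~\ref{lem:GK}), and then read off the conditions of Definition~\ref{def:adapted} from the resulting metric coefficients via short differentiations of $E=f_u\cdot f_u$, $F=f_u\cdot f_v$, and $G=f_v\cdot f_v$. Your computations of $f_v(u,0)=\mb 0$, $|f_u(u,0)|=1$, $f_u\cdot f_{vv}|_{v=0}=F_v(u,0)=0$, and $|f_{vv}(u,0)|^2=\tfrac{1}{2}G_{vv}(u,0)=G_0(u,0)/2=1$ match the paper's argument line for line.
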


\begin{proof}
We let $ds^2_f$ be the first fundamental form of $f(x,y)$.
By Lemma \ref{fact:coord}, we obtain a parameter change
$
(x,y) \mapsto (u(x,y),v(x,y))
$
on a neighborhood of $p$
such that the new parameter $(u,v)$ of
$f(u,v)$ defined by \eqref{eq:F0}
satisfies (1)-(3) of 
Lemma \ref{fact:coord} for the first fundamental form $ds^2_f$ of $f$.
Then we can show that this new coordinate system $(u,v)$
is the desired one:
Since the $u$-axis is the singular set of $ds^2_f$, we have
$
f_v(u,0)=\mb 0.
$
On the other hand,
$
f_u(u,0)\cdot f_u(u,0)=E(u,0)=1
$
and
\begin{equation}\label{eq:fvvu}
f_{vv}(u,0)\cdot f_u(u,0)=\left.\frac{\partial F(u,v)}{\partial v}\right |_{v=0}=0.
\end{equation}
Finally, we have
$$
f_{vv}(u,0)\cdot f_{vv}(u,0)
=\frac12 \left.\frac{\partial^2 G(u,v)}{\partial v^2}\right |_{v=0}
=\frac{G_0(u,0)}{2}=1,
$$
proving the assertion.
\end{proof}

From now on, we assume that $f(u,v)$ is parametrized by
the local coordinate system as in 
Definition \ref{def:adapted}.
Then $u$ is the arc-length parameter of the edge
$
\mb c(u):=f(u,0).
$
In this section, we assume that the curvature 
function $\kappa(u)$
of $\mb c(u)$ is positive for each $u$. 
Then the torsion function $\tau(u)$ is well-defined.
We can take the unit tangent vector
$
\mb e(u):=\mb c'(u)
$
(${}'=d/du$),
and the unit principal normal vector $\mb n(u)$
satisfying
$
\mb c''(u)=\kappa(u)\mb n(u).
$
We set
$$
\mb b(u):=\mb e(u)\times \mb n(u),
$$
which is the binormal vector of $\mb c(u)$.
Since $f_{vv}(u,0)$ is perpendicular to $\mb e(u)$,
we can write
\begin{equation}\label{eq:fvv}
f_{vv}(u,0)=\cos \theta(u) \mb n(u)-\sin \theta(u) \mb b(u),
\end{equation}
which is called the {\it cuspidal direction}.
As defined in the introduction, 
\begin{itemize}
\item 
the plane $\Pi(\mb c(u))$ passing through $\mb c(u)$
spanned by $\mb n(u)$ and $\mb b(u)$ is 
the normal plane of the space curve $\mb c(u)$,
\item the section of the image of $f$ by
$\Pi(\mb c(u))$ is a plane curve, which is
called the {\it sectional cusp} at $\mb c(u)$, and
\item 
the vector $f_{vv}(u,0)$ points in the tangential direction of
the sectional cusp at $\mb c(u)$.
So we call $\theta(u)$ the {\it cuspidal angle function}.
\item By using $\theta(u)$,
the {\it singular curvature} $\kappa_s$ and 
the {\it limiting normal curvature} $\kappa_\nu$
along the edge of $f$ (cf. \cite{SUY})
are given in \eqref{eq:sn2}.
\end{itemize}
The following fact is important:

\begin{lemma}[\cite{SUY}]
The singular curvature is intrinsic. 
In particular, it is defined along the singular curve
with respect to a given Kossowski metric {\rm (cf. \cite[(2.17)]{HHNSUY}).}
More precisely,
\begin{equation}\label{eq:ks0}
\kappa_s(u)=\frac{-E_{vv}(u,0)}2
\end{equation}
holds, where $(u,v)$ is the coordinate system as in Lemma \ref{fact:coord}.
\end{lemma}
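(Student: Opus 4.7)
The plan is to compute $E_{vv}(u,0)$ directly in the adapted coordinate system of Definition \ref{def:adapted} (equivalently, the coordinate system of Lemma \ref{fact:coord}, where $F\equiv 0$ and $G=v^2 G_0/2$ with $G_0(u,0)=2$), and compare with the extrinsic formula $\kappa_s=\kappa\cos\theta$ coming from \eqref{eq:fvv}. Since $E$ is a coefficient of $ds^2_f$, such an identity shows that $\kappa_s$ is determined by the metric alone (this is the content of \cite[(2.17)]{HHNSUY}); thus the only real work is establishing the formula \eqref{eq:ks0}.

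First I would differentiate $E=f_u\cdot f_u$ twice in $v$, obtaining
\[
E_{vv}=2f_{uv}\cdot f_{uv}+2f_u\cdot f_{uvv}.
\]
Since $f_v(u,0)=\mb 0$ in an adapted coordinate system, differentiating in $u$ yields $f_{uv}(u,0)=\mb 0$, so at $v=0$ the first term vanishes and
\[
E_{vv}(u,0)=2\,\mb e(u)\cdot f_{uvv}(u,0),
\]
where $\mb e(u)=f_u(u,0)=\mb c'(u)$ is the unit tangent of $\mb c$.

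Next I would compute $f_{uvv}(u,0)$ by differentiating \eqref{eq:fvv} in $u$ and using the Frenet formulas $\mb n'=-\kappa\mb e+\tau\mb b$ and $\mb b'=-\tau\mb n$:
\[
f_{uvv}(u,0)=-\kappa\cos\theta\,\mb e+\bigl(\tau\sin\theta-\theta'\sin\theta\bigr)\mb n+\bigl(\tau\cos\theta-\theta'\cos\theta\bigr)\mb b.
\]
Taking the inner product with $\mb e(u)$ kills the $\mb n$ and $\mb b$ components and leaves
\[
\mb e(u)\cdot f_{uvv}(u,0)=-\kappa(u)\cos\theta(u)=-\kappa_s(u),
\]
by \eqref{eq:sn2}. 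Combining with the previous display gives \eqref{eq:ks0}.

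For intrinsicness, I would observe that the right-hand side of \eqref{eq:ks0} is a function of the metric coefficient $E$ in a coordinate system constructed purely from $ds^2_f$ via Lemma \ref{fact:coord}. Hence the value of $\kappa_s$ along the singular curve depends only on $ds^2_f$, not on the isometric immersion $f$; this is precisely the assertion of \cite[(2.17)]{HHNSUY}. There is no real obstacle here beyond carefully handling the Frenet derivatives and confirming that $f_{uv}(u,0)=\mb 0$ in the adapted coordinates, which is immediate from $f_v(u,0)=\mb 0$.
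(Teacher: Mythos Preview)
Your argument is correct but proceeds along a different route from the paper. The paper does not compute $E_{vv}$ extrinsically; instead it invokes the general intrinsic formula from \cite[Proposition~1.8]{SUY},
\[
\kappa_s=\frac{-F_vE_u+2EF_{uv}-EE_{vv}}{2E^{3/2}\lambda_v},
\]
valid in any coordinates with the $u$-axis the singular set and $\partial_v$ null, and then specializes to the coordinates of Lemma~\ref{fact:coord}: there $F\equiv 0$, $E(u,0)=1$, and $\lambda=v\sqrt{EG_0/2}$ gives $\lambda_v(u,0)=1$, so the formula collapses to $-E_{vv}(u,0)/2$. Thus intrinsicness is inherited directly from the cited formula, with \eqref{eq:ks0} as a specialization.

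Your approach is more self-contained: you bypass the general \cite{SUY} formula entirely, compute $E_{vv}(u,0)$ from $E=f_u\cdot f_u$ via the Frenet equations and \eqref{eq:fvv}, and then argue intrinsicness \emph{a posteriori} from the fact that the coordinate system of Lemma~\ref{fact:coord} is built from $ds^2$ alone. This is a clean alternative; the only caveat is that the paper's route also establishes the more general formula \eqref{eq:ksN} along the way, which could in principle be used in coordinates where $F$ is not forced to vanish.
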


\begin{proof}
As shown in \cite[Proposition 1.8]{SUY}, $\kappa_s$
is expressed as
\begin{equation}\label{eq:ksN}
\kappa_s=\frac{-F_vE_u+2EF_{uv}-EE_{vv}}{2E^{3/2}\lambda_v},
\end{equation}
where $(u,v)$ is a local coordinate system such that
the $u$-axis is the singular set and $\partial_v$
points in the null direction.
If $(u,v)$ is the local coordinate system as in Lemma \ref{fact:coord},
then $F=0$, $\lambda=v \sqrt{EG_0}$
and $E(u,0)=1$ hold. So we can obtain \eqref{eq:ks0}.
\end{proof}

We now prove the following theorem under the assumption that
the curve $\mb c$ is real analytic:

\begin{theorem}\label{thm:S1}
We let $U$ be an open subset of the $uv$-plane $\R^2$ 
containing $J\times \{0\}$ and
$ds^2$ a  
real analytic
Kossowski metric 
satisfying \eqref{eq:3-5}. 
Suppose that 
the curvature function $\kappa$
of the curve $\mb c$ 
is positive everywhere
and the absolute value of the singular curvature 
$\kappa_s(u)$ of $ds^2$ along the singular curve
$$
J\ni u\mapsto (u,0)\in U
$$
is less than $\kappa(u)$ for each $u\in J$.
Then there exist two real analytic generalized cuspidal edges
$g_+,\,\,g_-$ defined on an open subset $V(\subset U)$
containing $J\times \{0\}$ satisfying the following properties:
\begin{enumerate}
\item 
The maps $u\mapsto g_+(u,0)$ and  $u\mapsto g_-(u,0)$
parametrize $C$, which induce
the same orientation as $\mb c:J\to \R^3$.
\item $ds^2$ is the common first fundamental form of $g_+$ and $g_-$.
\item $g_-$ is a faithful isomer of $g_+$.
\item If $\kappa^\pm_\nu:J\to \R$ are the limiting normal curvature functions of  
$g_\pm$, then
$\kappa^-_\nu=-\kappa^+_\nu$ holds on $J$. 
\item If $ds^2$ is non-parabolic at $(u,0)$, then 
$g_+$ and $g_-$ have cuspidal edges at $(u,0)$.
\end{enumerate}
Moreover, suppose that $h:U\to \R^3$ is a
generalized cuspidal edge whose first fundamental form is
$ds^2$. 
If $u\mapsto h(u,0)$
parametrizes $C$ 
giving the same orientation as $\mb c:J\to \R^3$,
then $h$ coincides with $g_+$ or $g_-$. 
\end{theorem}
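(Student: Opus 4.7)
The plan is to adapt the Cauchy--Kowalevski argument of \cite{NUY}. By Lemma~\ref{fact:coord}, after a real analytic change of parameter we may assume $(u,v)$ is adapted to $ds^2$, so that $E(u,0) = 1$, $F\equiv 0$, and $G(u,v) = v^2 G_0(u,v)/2$ with $G_0(u,0) = 2$. Any real analytic generalized cuspidal edge $g$ restricting to $\mb c$ on $v=0$ can be written
\begin{equation*}
g(u,v) = \mb c(u) + \frac{v^2}{2}\,\xi(u,v)
\end{equation*}
for some real analytic $\xi \colon V \to \R^3$. First I would show that the three metric-matching equations $E_g = E$, $F_g = 0$, $G_g = G$, after dividing respectively by $1$, $v$, and $v^2$, become a real analytic system of three equations for $\xi$ on a neighborhood of $J\times\{0\}$.

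Evaluating this system at $v = 0$ forces $\xi(u,0)$ to be a unit vector perpendicular to $\mb e(u) = \mb c'(u)$, so that
\begin{equation*}
\xi(u,0) = \cos\theta(u)\,\mb n(u) - \sin\theta(u)\,\mb b(u)
\end{equation*}
in the Frenet frame of $\mb c$. Combining the intrinsic formula \eqref{eq:ks0} with $\kappa_s = \kappa\cos\theta$ from \eqref{eq:sn2} shows $\cos\theta(u) = \kappa_s(u)/\kappa(u) \in (-1,1)$ by hypothesis, so $\sin\theta(u)$ is real analytic but determined only up to a global sign. The two sign choices yield two distinct real analytic initial data $\xi_\pm(u,0)$ on $J\times\{0\}$ with $\sin\theta_- = -\sin\theta_+$.

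Next, differentiating the three metric equations once more in $v$ yields an expression for the top-order $v$-derivative of $\xi$, casting the system in Cauchy--Kowalevski normal form; the leading coefficient matrix is invertible thanks to $\kappa > 0$, the linear independence of $\{\mb e,\mb n,\mb b\}$, and $|\sin\theta| > 0$. The Cauchy--Kowalevski theorem then furnishes a unique real analytic solution $\xi_\pm$ on some neighborhood $V$ of $J\times\{0\}$, and $g_\pm := \mb c + (v^2/2)\xi_\pm$ supplies the required maps. Properties (1) and (2) are immediate from the construction; (3) holds because $\theta \mapsto -\theta$ reflects the cuspidal direction across the osculating plane of $C$ while preserving the edge orientation, so $g_-$ is a faithful isomer of $g_+$ in the sense of Definition~\ref{def:IS}; (4) is $\kappa_\nu = \kappa\sin\theta$ from \eqref{eq:sn2}; and (5) follows from Definition~\ref{def:np}, since non-parabolicity forces the cuspidal curvature to be nonzero at the singular point, ruling out degenerate singularity types. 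The uniqueness claim is the uniqueness clause of Cauchy--Kowalevski applied to $\xi_h$ for any competitor $h$.

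The main obstacle is the reduction to Cauchy--Kowalevski normal form: the raw system is degenerate along $v = 0$, and one must divide out the correct powers of $v$ and then verify invertibility of the coefficient matrix for the highest $v$-derivative of $\xi$ throughout $J\times\{0\}$. The strict inequality $|\kappa_s(u)| < \kappa(u)$ is precisely the geometric condition ensuring both this nondegeneracy and the genuine distinctness of the two sign choices; without it the two candidate edges would collapse or lose real analyticity.
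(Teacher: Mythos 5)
Your overall strategy is the one the paper uses (adapted coordinates from Lemma~\ref{fact:coord}, a Cauchy--Kowalevski system with data on $v=0$, the two sign choices of $\sin\theta$ with $\cos\theta=\kappa_s/\kappa$, and uniqueness of the CK solution for the final clause), but there is a genuine gap at the central step: you assert that property (2), namely that $ds^2$ is the first fundamental form of $g_\pm$, is ``immediate from the construction.'' It is not. To reach CK normal form you must differentiate the metric-matching equations in $v$ (as you yourself note, the raw system is degenerate along $v=0$), so the CK solution a priori satisfies only differentiated consequences of $E_g=E$, $F_g=0$, $G_g=G$. One must then integrate back: show that the solution of the evolution system, with the chosen initial data, actually satisfies the original three equations. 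In the paper this occupies the bulk of the proof: the identities $\zeta\cdot\zeta=G_0/2$, $g_u\cdot\zeta=0$, and especially $g_u\cdot g_u=E$, the last of which rests on the auxiliary identity \eqref{eq:key}, whose proof uses precisely the initial condition $\zeta(u,0)\cdot g_{uu}(u,0)=\kappa(u)\cos\theta(u)=\kappa_s(u)=-E_{vv}(u,0)/2$. This is where the compatibility between the prescribed curve $\mb c$ and the intrinsic data of $ds^2$ enters; without this verification the constructed map could fail to realize $ds^2$, and your argument as written does not close.

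Two further points, smaller but worth fixing. First, your plan to solve for the top $v$-derivative of $\xi$ (with $g=\mb c+\tfrac{v^2}{2}\xi$) is only asserted; the coefficient of $\xi_{vv}$ in the differentiated $G$-equation vanishes at $v=0$, and the paper avoids this by passing to the first-order system in $(g,\,g_u,\,\zeta:=g_v/v)$, in which $\zeta_v$ is determined linearly by its inner products against $\zeta,\,g_u,\,g_{uu}$, a frame that is nondegenerate at $v=0$ exactly because $\sin\theta\neq 0$; you would need to exhibit an analogous reduction and check that the right-hand sides (e.g. the function $(E_v-vE_{vv})/v^2$) remain real analytic after dividing by powers of $v$. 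Second, Lemma~\ref{fact:coord} is local in the base point, so to get $g_\pm$ on a neighborhood of all of $J\times\{0\}$ you need the covering/compactness argument and the gluing via uniqueness (the paper's Remark~\ref{rmk:V}), including consistency of the sign choice of $\sin\theta$ along $J$; your proposal passes over this.
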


We prove this theorem from here on out, as a modification of the proof 
given in \cite{NUY}.

\begin{remark}\label{rmk:V}
For each $t_0\in J$, we can 
take a connected local coordinate neighborhood $(V(t_0); u,v)$
of $(t_0,0)$ satisfying  (1), (2) and (3)
of Lemma \ref{fact:coord}.
Since $J$ is compact, we can find finite points
$t_1,...,t_k\in J$ such that $\{V(t_j)\}_{j=1}^k$ covers
the singular curve $J\times \{0\}$.
It is sufficient to prove Theorem \ref{thm:S1} by replacing
$U$ by each $V(t_j)$ ($j=1,...,k$). (In fact, 
the assertion of Theorem \ref{thm:S1}
contains the uniqueness of $g_{\pm}$ on each $V(t_j)$,
and so $g_\pm$ obtained in $V(t_j)$ can be uniquely extended to
$V(t_j)\cup V(t_{j+1})$ for each $j=1,...,k-1$.)
\end{remark}

The statements of Theorem \ref{thm:S1} are properties
of the maps $g_\pm$ which do not depend on the choice of a
local coordinate system containing $J\times \{0\}$.
As explained in Remark \ref{rmk:V},  
we may assume the existence of a local coordinate system $(U;u,v)$
satisfying (1), (2) and (3) of Lemma \ref{fact:coord}, 
without loss of generality.
Then $U$ contains a bounded closed interval $I$
on the $u$-axis such that 
$I\times \{0\}$ gives the singular set of $ds^2$.
We now show the existence of
a real analytic generalized cuspidal edge 
$g(u,v)$ such that
$g(u,0)=\mb c(u)$, $g_v(u,0)=\mb 0$ and
$$
g_u\cdot g_u=E,\quad g_u\cdot g_v=0,\quad
g_v\cdot g_v=G,
$$
which is defined on a neighborhood of $I\times \{0\}$ in $U$
using the Cauchy-Kowalevski theorem.  
$($We remark that
$\mb c(u)$ is parametrized as an
arc-length parameter.$)$
As in Lemma \ref{fact:coord},
we can write $G=v^2 G_0/2$.
The following lemma holds:

\begin{lemma}\label{lem:lemma}
If there exists a real analytic generalized cuspidal edge $g\,(=g_\pm)$
as in Theorem~\ref{thm:S1}, then it is a solution of
the following system of partial differential equations
\begin{equation}\label{eq:CK}
\begin{cases}
g_v&=v \zeta,\\
\xi_v&(=g_{uv})=v \zeta_u, \\
\zeta_v&=\dy\frac1{4}\left((\zeta,g_u,\xi_u)^{T}\right)^{-1}
\biggl((G_0)_v,
-v(G_0)_u,
2r-v (G_0)_{uu}
+4v \zeta_u\cdot \zeta_u\biggr)^{T}
\end{cases}
\end{equation}
of unknown $\R^3$-valued functions $g,\xi,\zeta$
with the initial data
\begin{equation}\label{eq:CKI}
g(u,0)=\mb c(u), \quad \xi(u,0)=\mb c'(u)(=g_u(u,0)), \quad \zeta(u,0)=\mb x(u),
\end{equation}
on  $I$, where $A^{T}$ denotes the transpose of a $3\times 3$-matrix $A$
and
\begin{equation}\label{eq:gvv2}
\mb x(u):=\cos \theta(u) \mb n(u)\mp \sin \theta(u) \mb b(u),
\quad
\cos \theta(u):=\frac{\kappa_s(u)}{\kappa(u)}.
\end{equation}
\end{lemma}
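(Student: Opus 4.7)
The plan is to derive the system \eqref{eq:CK} and initial data \eqref{eq:CKI} directly from the defining relations of $g$. Since $g$ is real analytic with $g_v(u,0) = \mb{0}$, Hadamard's lemma produces a unique real analytic function $\zeta : U \to \R^3$ satisfying $g_v = v\zeta$, which is the first equation of \eqref{eq:CK}. Setting $\xi := g_u$, the second equation follows by commuting partials: $\xi_v = g_{uv} = (g_v)_u = v\zeta_u$. The initial conditions $g(u,0) = \mb{c}(u)$ and $\xi(u,0) = \mb{c}'(u)$ are immediate from \eqref{eq:FC} and the arc-length parametrization of $\mb{c}$.

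For the third PDE, I express $\zeta_v$ as a linear combination in the frame $\{\zeta, g_u, \xi_u\}$ by computing its three inner products. From $g_v\cdot g_v = v^2 G_0/2$ one extracts $\zeta\cdot\zeta = G_0/2$; differentiating in $v$ yields $\zeta\cdot\zeta_v = (G_0)_v/4$. From $g_u\cdot g_v = F = 0$ one gets $\xi\cdot\zeta = 0$; differentiating in $v$ and inserting $\xi_v = v\zeta_u$ together with $\zeta\cdot\zeta_u = (G_0)_u/4$ (obtained by $u$-differentiating $\zeta\cdot\zeta = G_0/2$) gives $\xi\cdot\zeta_v = -v(G_0)_u/4$. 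The remaining inner product $\xi_u\cdot\zeta_v$ requires Gauss's theorema egregium: one writes the extrinsic identity relating $g_{uu}\cdot g_{vv} - |g_{uv}|^2$ to $K(EG-F^2)$, substitutes $g_{vv} = \zeta + v\zeta_v$ and $g_{uv} = v\zeta_u$, and expresses the Gaussian curvature of the orthogonal metric $E\,du^2 + (v^2 G_0/2)\,dv^2$ as an intrinsic polynomial (denoted $r$) in $E$, $G_0$ and their derivatives. Collecting terms produces the third scalar equation of \eqref{eq:CK}.

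For the coefficient matrix $(\zeta, g_u, \xi_u)^T$ to be invertible, the frame must be linearly independent on a neighborhood of $J\times\{0\}$. At $v=0$, one has $g_u = \mb{e}$, $\xi_u = \kappa\mb{n}$, and, by Definition \ref{def:adapted} together with \eqref{eq:fvv}, $\zeta = g_{vv}(u,0) = \cos\theta\,\mb{n} \mp \sin\theta\,\mb{b}$ with $\cos\theta = \kappa_s/\kappa$; the determinant in the Frenet basis is $\pm\kappa\sin\theta$, which is nonzero because $\kappa > 0$ and the genericity hypothesis $|\kappa_s| < \kappa$ forces $\sin\theta \ne 0$. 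Invertibility then persists on a neighborhood of $J\times\{0\}$ by continuity. The $\mp$ sign records the two unit vectors in the $\mb{n}$-$\mb{b}$ plane with the prescribed $\mb{n}$-component, corresponding to the two solutions $g_\pm$.

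The main obstacle is the precise identification of the intrinsic term $r$ via Gauss's equation for the orthogonal parametrization of Lemma \ref{fact:coord}, together with the bookkeeping that converts the extrinsic quantity $g_{uu}\cdot g_{vv}$ into the stated combination of $(G_0)_{uu}$, $\zeta_u\cdot\zeta_u$, and $r$. Once \eqref{eq:CK} is in its claimed quasilinear normal form, existence, uniqueness, and real analyticity of $g_\pm$ will follow from the Cauchy--Kowalevski theorem, to be invoked in the next step of the proof of Theorem \ref{thm:S1}.
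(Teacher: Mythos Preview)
Your derivation of the first two equations of \eqref{eq:CK}, the initial data \eqref{eq:CKI}, and the invertibility of the coefficient matrix is correct and matches the paper.  The gap is in the third scalar equation.

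The identity you invoke, ``$g_{uu}\cdot g_{vv} - |g_{uv}|^2 = K(EG-F^2)$'', is false: decomposing $g_{uu}$, $g_{uv}$, $g_{vv}$ into tangential and normal parts shows that the left-hand side equals $K(EG-F^2)$ \emph{plus} a nonzero tangential term built from Christoffel symbols.  In fact, for $F\equiv 0$ one has the elementary identity
\[
g_{uu}\cdot g_{vv} - g_{uv}\cdot g_{uv}
= (g_u\cdot g_{vv})_u - (g_u\cdot g_{uv})_v
= -\tfrac{1}{2}G_{uu} - \tfrac{1}{2}E_{vv},
\]
obtained by repeated use of $g_u\cdot g_{uv}=E_v/2$ and $g_u\cdot g_{vv}=F_v-G_u/2=-G_u/2$.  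No Gaussian curvature enters.  Consequently your identification of $r$ as ``the Gaussian curvature of the orthogonal metric'' is wrong: in the paper $r$ is defined explicitly in \eqref{eq:r} as
\[
r(u,v)=\frac{E_v - vE_{vv}}{v^2}=\Bigl(\frac{-E_v}{v}\Bigr)_{\!v},
\]
which is real analytic because $E_v(u,0)=0$ (condition (1) of Lemma~\ref{fact:coord}).  The paper's computation combines the identity above with the separately derived relation $\zeta\cdot g_{uu}=-E_v/(2v)$ (from $v\zeta\cdot g_{uu}=g_v\cdot g_{uu}=-E_v/2$), and then $g_{vv}=\zeta+v\zeta_v$, $g_{uv}=v\zeta_u$, $G_{uu}=v^2(G_0)_{uu}/2$ produce exactly $\zeta_v\cdot\xi_u = r/2 - v(G_0)_{uu}/4 + v\zeta_u\cdot\zeta_u$.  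Replace your Gauss--theorem paragraph with this direct argument.
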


\begin{remark}
Since $g_v=v\zeta$ and $\xi_v=v\zeta_u$, we have
$
\xi_v=v\zeta_u=g_{uv}.
$
Thus, the initial condition $\xi(u,0)=g_u(u,0)$ yields 
$\xi(u,v)=g_u(u,v)$.
\end{remark}

\begin{proof}[Proof of Lemma \ref{lem:lemma}]
Since $ds^2$ is real analytic,
$E$ and $G$ are real analytic functions.
Since $g_v(u,0)=\mb 0$,
we can write
$$
g_v(u,v)=v \zeta(u,v),
$$
where $\zeta(u,v)$ is a real analytic function defined 
on a neighborhood of $I\times \{0\}$ in $\R^2$.
Then 
\begin{equation}\label{eq:I0}
\zeta_v \cdot \zeta=\frac{(\zeta\cdot \zeta)_v}2=\frac{(G_0)_v}{4}.
\end{equation}
 On the other hand, since
\begin{equation}\label{eq:Ia}
   v g_u\cdot \zeta=g_u\cdot g_v=0,
\end{equation}
we have $g_u\cdot \zeta=0$.
Differentiating this, we have
$$
0=v(\zeta\cdot g_u)_v
=v\zeta_v\cdot g_u+v\zeta\cdot g_{uv}
=v\zeta_v\cdot g_u+g_{v}\cdot g_{uv}
=v\zeta_v\cdot g_u+\frac{G_u}2.
$$
Since $G=v^2G_0/2$, we have
\begin{equation}\label{eq:Ib}
\zeta_v\cdot g_u=-\frac{v}{4}(G_0)_u.
\end{equation}
We now obtain information on $\zeta_v\cdot g_{uu}$.
It holds that
$$
  v\zeta\cdot g_{uu}=g_v\cdot g_{uu}
   =(g_v\cdot g_{u})_u-g_{uv}\cdot g_u
   =-g_{uv}\cdot g_u
   =-\frac{E_v}{2},
$$
that is, we obtain
\begin{equation}\label{eq:Ic0}
\zeta\cdot g_{uu}=-\frac{E_v}{2v}.
\end{equation}
On the other hand, we have that 
\begin{align*}
\zeta \cdot g_{uu}+v \zeta_v\cdot g_{uu}
&=g_{vv} \cdot g_{uu}  
=(g_{vv} \cdot g_{u})_u-g_{vvu} \cdot g_{u}  \\
&=\left\{(g_{v} \cdot g_{u})_v-
(g_{v} \cdot g_{uv})\right\}_u-(g_{uv} \cdot g_{u})_v+g_{uv}\cdot g_{uv}  \\
&=(-G_u/2)_u-(E_v/2)_v+g_{uv}\cdot g_{uv}.
\end{align*}
This, together with \eqref{eq:Ic0}, 
gives the following identity
\begin{equation}\label{eq:Ic2}
\zeta_v\cdot g_{uu}
=\frac{E_v-vE_{vv}}{2v^2}-v\frac{(G_0)_{uu}}{4}+v\zeta_u\cdot \zeta_u.
\end{equation}
Since $E_v(u,0)=0$, the function $E_v/v$ is a real analytic function, and
the function 
\begin{equation}\label{eq:r}
r(u,v):=\frac{E_v-vE_{vv}}{v^2}=\left(\frac{-E_v}{v}\right)_v
\end{equation}
is also real analytic.
By \eqref{eq:Ia}, \eqref{eq:Ib} and \eqref{eq:Ic2},
we have the third equality of
\eqref{eq:CK}
under the assumption that
the $3\times 3$ matrix 
$$
M(u,v):=(\zeta,g_u,\xi_{u})
$$
is  regular,  
where $\xi:=g_u$.
The map $g$ must have
the initial data \eqref{eq:CKI},
where
$$
\mb x(u)=\zeta(u,0)
=\lim_{v\to 0}\frac{g_v(u,v)}{v}=g_{vv}(u,0).
$$
By \eqref{eq:fvv}, $\mb x(u)$ can be written in the
form
\begin{equation}\label{eq:gvv}
(\mb x_+(u):=)\mb x(u)=\cos \theta(u) \mb n(u)-\sin \theta(u) \mb b(u),
\end{equation}
where $\theta(u)$ is the function defined by
\eqref{eq:gvv2} and
$\kappa(u)$ (resp.~$\kappa_s(u)$)
is the curvature function of $\mb c(u)$
(resp. the singular curvature function defined by 
\eqref{eq:ks0}).
In fact, since 
the singular curvature $\kappa_s$ of $ds^2$ is less than 
$\kappa$ on $I$,
there exists a real analytic angular function $\theta:I\to \R$ 
satisfying \eqref{eq:gvv2} and
$$
0<|\theta(u)|<\frac\pi2 \qquad (u\in I). 
$$
Moreover, such a  $\theta$ is determined up to a $\pm$-ambiguity.
In particular,
\begin{equation}\label{eq:gvv3}
(\mb x_-(u):=)\mb x(u)=\cos \theta(u) \mb n(u)+\sin \theta(u) \mb b(u)
\end{equation}
is the other possibility.
\end{proof}

We now return to the proof of Theorem \ref{thm:S1}.
We have
\begin{align*}
(M(u,0)&=)\,\left(\zeta(u,0),\,g_u(u,0),\,g_{uu}(u,0)\right) \\
&\,=\left(\cos \theta(u) \mb n(u)-\sin \theta(u) \mb b(u),
\,\mb e(u),\,\kappa(u) \mb n(u)\right).
\end{align*}
Since the singular curvature of $ds^2$ 
satisfies $|\kappa_s|<\kappa$ on $I$,
the function $\sin \theta$
does not vanish on $I$. 
Thus the matrix $M(u,0)$ is regular for each $u\in I$.
We can then apply the Cauchy-Kowalevski theorem (cf. \cite{KP})
for
the system of partial differential equations \eqref{eq:CK}
with initial data \eqref{eq:CKI}
and obtain a unique real analytic solution $(g,\xi,\zeta)$
of \eqref{eq:CK}
defined on a neighborhood of $I\times \{0\}$ in $\R^2$.
Thus, we obtained the existence of real analytic generalized 
cuspidal edges $g_{\pm}(u,v)$ 
corresponding to the initial data $\mb x_\pm(u)$.
By the above construction of these $g_{\pm}$,
the functions $\pm \theta$ coincide with
the cuspidal angles of $g_\pm$, respectively.
To accomplish the proof of Theorem \ref{thm:S1}, 
we need to verify that
the first fundamental forms of $g_\pm$ coincide with $ds^2$.
To show this, we consider the case $g=g_+$
with initial condition $\mb x(u):=\mb x_+(u)$,
without loss of generality.
The third equation of \eqref{eq:CK}
yields
$
  \zeta_v\cdot \zeta
  = (G_0)_v/4,
$
and hence we have
$( \zeta\cdot \zeta - G_0/2 )_v =0$.
Since
$$
  \zeta(u,0)\cdot \zeta(u,0) - \frac{G_0(u,0)}{2} 
  = \mb x(u)\cdot \mb x(u) -1 
  =0,
$$
the Cauchy-Kowalevski theorem yields that
\begin{equation}\label{eq:G2}
\zeta\cdot \zeta=\frac{G_0}2.
\end{equation}
Hence, 
by the first equation of \eqref{eq:CK},
we have 
\begin{equation}\label{eq:GG}
  g_v \cdot  g_v
  =\frac{v^2 G_0}2
  =G.
\end{equation}
On the other hand, using \eqref{eq:CK}, we have
$$
(\xi-g_u)_v=\xi_v-g_{uv}=v\zeta_u-(g_{v})_u
=v\zeta_u-(v\zeta)_u=0.
$$
The initial condition $\xi(u,0)=g_u(u,0)$ yields that
$
g_u=\xi.
$
Then $g_{uv}=\xi_v=v\zeta_u$
and 
$$
g_{uv}\cdot \zeta=v \zeta_u\cdot \zeta
=v\frac{(\zeta\cdot \zeta)_u}2=\frac{v (G_0)_u} 4
$$
hold. Using this, we have
$$
(g_u \cdot \zeta)_v
=g_{uv}\cdot \zeta+g_u\cdot \zeta_v \\
=\frac{v (G_0)_u} 4-\frac{v (G_0)_u} 4=0.
$$
Since
$g_u(u,0) \cdot \zeta(u,0)=0$,
we can conclude that
$g_u \cdot \zeta=0$,
that is,
\begin{equation}\label{eq:G}
g_{u}\cdot g_v=0
\end{equation}
is obtained.  We now prepare the following:

\begin{lemma}
Suppose that $($which is one of the conditions in 
\eqref{eq:CK}$)$
$$
\zeta_v \cdot \xi_u(=\zeta_v\cdot g_{uu})=
\frac{2r-v (G_0)_{uu}
+4v \zeta_u\cdot \zeta_u}{4}.
$$
Then the initial condition \eqref{eq:gvv}
implies the following identity
\begin{equation}\label{eq:key}
\frac{E_v}2+v \zeta \cdot \xi_{u}=0.
\end{equation}
\end{lemma}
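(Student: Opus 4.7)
The plan is to introduce the auxiliary function
\[
\Phi(u,v):=\frac{E_v}{2}+v\,\zeta\cdot \xi_u,
\]
and show that $\Phi\equiv 0$. First I would note that $\Phi(u,0)=0$, since $E_v(u,0)=0$ by condition (1) of Lemma \ref{fact:coord} and the second term is explicitly $O(v)$. Next, I would compute $\Phi_v$ and try to derive a differential equation satisfied by $\Phi$ using the hypothesis of the lemma together with the relations already established just above (namely $\xi=g_u$, $\xi_v=v\zeta_u$, and \eqref{eq:G2}).

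The key computation is
\[
\Phi_v=\frac{E_{vv}}{2}+\zeta\cdot\xi_u+v\,\zeta_v\cdot\xi_u+v\,\zeta\cdot\xi_{uv}.
\]
From $\xi_v=v\zeta_u$ and the equality of mixed partials one has $\xi_{uv}=v\zeta_{uu}$, so the last term equals $v^2\zeta\cdot\zeta_{uu}$. Substituting the assumed formula for $\zeta_v\cdot\xi_u$ and using $r=(E_v-vE_{vv})/v^2$, the first term $E_{vv}/2$ is cancelled by the $-E_{vv}/2$ produced from $vr/2$, and there remains
\[
\Phi_v=\zeta\cdot\xi_u+\frac{E_v}{2v}+v^2\Bigl(\zeta_u\cdot\zeta_u+\zeta\cdot\zeta_{uu}-\frac{(G_0)_{uu}}{4}\Bigr).
\]
By \eqref{eq:G2}, $\zeta\cdot\zeta=G_0/2$, and differentiating twice in $u$ gives
$\zeta_u\cdot\zeta_u+\zeta\cdot\zeta_{uu}=(G_0)_{uu}/4$, so the last bracket vanishes. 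Recognizing that $\Phi/v=E_v/(2v)+\zeta\cdot\xi_u$ (for $v\ne0$), we obtain the identity $v\Phi_v=\Phi$.

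Since $E_v(u,0)=0$, Hadamard's lemma yields that $\Phi/v$ extends to a real analytic function $\Psi(u,v)$; so $\Phi=v\,\Psi$. The ODE $v\Phi_v=\Phi$ then becomes $v^2\Psi_v=0$, forcing $\Psi_v\equiv 0$, i.e.\ $\Psi=\Psi(u)$ depends only on $u$. To evaluate $\Psi$, I would set $v=0$: by Hadamard, $E_v/(2v)|_{v=0}=E_{vv}(u,0)/2$, and the initial data \eqref{eq:gvv} gives $\zeta(u,0)\cdot\xi_u(u,0)=\mb x(u)\cdot\mb c''(u)=\kappa(u)\cos\theta(u)=\kappa_s(u)$ in view of \eqref{eq:gvv2}. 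Combining this with the intrinsic formula \eqref{eq:ks0}, namely $\kappa_s=-E_{vv}(u,0)/2$, yields $\Psi(u)=\kappa_s(u)-\kappa_s(u)=0$, hence $\Phi\equiv 0$, which is \eqref{eq:key}.

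The main obstacle I anticipate is the bookkeeping: one must see that the $v^2$-terms arising from $\zeta_v\cdot\xi_u$ and from $v\zeta\cdot\xi_{uv}$ conspire, via the already established identity \eqref{eq:G2}, to cancel exactly against $v^2(G_0)_{uu}/4$. This cancellation is what reduces $\Phi_v$ to $\Phi/v$ and makes the argument work; the remaining step of showing $\Psi(u)\equiv 0$ is then an initial-value computation dictated by \eqref{eq:gvv2} and \eqref{eq:ks0}.
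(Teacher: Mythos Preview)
Your proof is correct and is essentially the same as the paper's argument. The only organizational difference is that the paper works directly with your quotient $\Psi=\Phi/v=\zeta\cdot\xi_u+E_v/(2v)$ from the outset: it computes $(\zeta\cdot\xi_u)_v=r/2=(-E_v/(2v))_v$ using the same cancellation via \eqref{eq:G2}, concludes $\Psi_v=0$, and then checks $\Psi(u,0)=0$ via the identical initial-value computation with \eqref{eq:gvv2} and \eqref{eq:ks0}; your route through $\Phi$ and the ODE $v\Phi_v=\Phi$ simply repackages this.
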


\begin{proof}
Using \eqref{eq:G2},
we have that
\begin{align*}
(\zeta \cdot \xi_{u})_v
&=\zeta_v\cdot \xi_{u}+\zeta \cdot \xi_{uv} 
=\zeta_v\cdot \xi_{u}+\zeta \cdot g_{uuv} 
=\zeta_v\cdot \xi_{u}+\zeta \cdot (v \zeta_{uu}) \\
&=\frac1{4}\biggl(2r-v (G_0)_{uu}+4v \zeta_u\cdot \zeta_u\biggr)+\zeta 
\cdot (v \zeta_{uu})\\
&=\frac{r}2-\frac{v}2 (G_0)_{uu}+v(\zeta_u\cdot \zeta_u+\zeta \cdot \zeta_{uu}) \\
&=\frac{r}2-\frac{v}{4} (\zeta\cdot \zeta)_{uu}+\frac{v}2(\zeta\cdot \zeta)_{uu} 
=\frac{r}2.
\end{align*}
By \eqref{eq:r},  
$$
\left(\zeta \cdot \xi_{u}+\frac{E_v}{2v}\right)_{\!v}=0
$$
holds. On the other hand, we have
\begin{align*}
\zeta(u,0) \cdot \xi_{u}(u,0)
&={\mb x}(u)\cdot g_{uu}(u,0)
=(\cos \theta(u) \mb n(u)-\sin \theta(u) \mb b(u))\cdot \mb c''(u) \\
&=\bigl(\cos \theta(u) \mb n(u)-\sin \theta(u) \mb b(u)\bigr)
\cdot \left(\kappa(u)\mb n(u)\right)
=\kappa(u)\cos \theta(u) \\
&=\kappa(u) \frac{\kappa_s(u)}{\kappa(u)}=\kappa_s(u)
=\frac{-E_{vv}(u,0)}2=\lim_{v\to 0}\frac{-E_v(u,v)}{2v}.
\end{align*}
So we obtain \eqref{eq:key}.
\end{proof}

We again return to the proof of Theorem \ref{thm:S1}.
By \eqref{eq:key},
we have
\begin{align*}
\frac12 (g_u\cdot g_u)_v
=g_{uv}\cdot g_u
=(g_v\cdot g_u)_u-g_{v}\cdot g_{uu}
 =-g_v \cdot g_{uu}
=\frac{E_v}2. 
\end{align*}
This, with the initial condition
$
g_{u}(u,0)\cdot  g_u(u,0)
=\mb c'(u)\cdot \mb c'(u)=1
$
implies
\begin{equation}\label{eq:E}
g_{u}\cdot g_u=E.
\end{equation}
By \eqref{eq:E},  \eqref{eq:G} and \eqref{eq:GG},
we can conclude that $ds^2$ coincides with
the first fundamental form of $g=g_+$, which implies
the existence and uniqueness of $g=g_+$.
Replacing $\theta$ by $-\theta$,
we also obtain the existence and uniqueness
of $g=g_-$.
Since the cuspidal angles of $g_\pm$ are distinct,
the image of $g_-$ does not coincide with $g_+$.
Since the orientation of 
$u\mapsto g_-(u,0)$ is
compatible with that of the
curve $u\mapsto g_+(u,0)$, the map
$g_-$ is a faithful isomer of $g_+$.

Here, we suppose $ds^2$ is non-parabolic at $(u,0)$, 
then $g_+$ and $g_-$ are wave fronts by \cite[Proposition 4 (o)]{HNUY}. 
Since $ds^2$ is of type ${\rm I}$, the criterion of cuspidal edges
given in \cite[Proposition 4 (i)]{HNUY} yields that
$g_+$ and $g_-$ are both cuspidal edges.

Finally, the last assertion
of Theorem \ref{thm:S1}
follows from the uniqueness of
the system of partial equations
\eqref{eq:CK}
as a consequence of  the Cauchy-Kowalevski theorem,
proving Theorem \ref{thm:S1}.

\medskip
By the above proof of Theorem \ref{thm:S1},
we obtain the following:

\begin{corollary}\label{cor:T}
The cuspidal angle of $g_-$ is $-\theta$, 
where $\theta$ is the cuspidal angle of $g_+$.
In particular, $g_-$ is a faithful isomer of $g_+$
since $\sin \theta\ne 0$.
\end{corollary}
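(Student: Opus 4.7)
The plan is to read off the cuspidal angle of each $g_\pm$ directly from the initial data used in the Cauchy--Kowalevski construction, and then verify that the distinctness forced by $\sin\theta\neq 0$ turns the isometry into a faithful isomer.

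First I would recall that for a generalized cuspidal edge parametrized in an adapted coordinate system, the cuspidal angle $\theta_f(u)$ at $\mb c(u)$ is characterized by formula \eqref{eq:fvv}, namely
\begin{equation*}
f_{vv}(u,0)=\cos\theta_f(u)\,\mb n(u)-\sin\theta_f(u)\,\mb b(u).
\end{equation*}
Hence computing $g_{vv}(u,0)$ for each of $g_\pm$ and matching coefficients against this formula will determine $\theta_{g_\pm}$. By the first equation of \eqref{eq:CK}, we have $(g_\pm)_v=v\zeta_\pm$, where $\zeta_\pm$ denotes the $\zeta$-component of the solution with initial data $\zeta(u,0)=\mb x_\pm(u)$. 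Differentiating and setting $v=0$ gives $(g_\pm)_{vv}(u,0)=\zeta_\pm(u,0)=\mb x_\pm(u)$.

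Next I would substitute the explicit forms \eqref{eq:gvv} and \eqref{eq:gvv3} of $\mb x_\pm$. For $g_+$ we obtain
\begin{equation*}
(g_+)_{vv}(u,0)=\cos\theta(u)\,\mb n(u)-\sin\theta(u)\,\mb b(u),
\end{equation*}
so $\theta_{g_+}=\theta$. For $g_-$ we obtain
\begin{equation*}
(g_-)_{vv}(u,0)=\cos\theta(u)\,\mb n(u)+\sin\theta(u)\,\mb b(u)
=\cos(-\theta(u))\,\mb n(u)-\sin(-\theta(u))\,\mb b(u),
\end{equation*}
so $\theta_{g_-}=-\theta$. This proves the first half of the statement; note that the argument is essentially a matter of unpacking the initial condition, so no hard step is involved here, the verification being automatic once the adapted-coordinate formula \eqref{eq:fvv} is invoked.

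For the faithful isomer assertion, the ingredients are already in place from Theorem \ref{thm:S1}: both $g_+$ and $g_-$ induce the same first fundamental form $ds^2$, and the parametrizations $u\mapsto g_\pm(u,0)$ of $C$ share the orientation of $\mb c$, so the diffeomorphism identifying their domains respects the orientation of the singular curve; this is exactly the condition for faithfulness in Definition \ref{def:IS}. It remains only to check that $g_-$ is not right equivalent to $g_+$. Since the cuspidal angle is an invariant of the right equivalence class (being defined intrinsically from the image of the sectional cusp in the normal plane), right equivalence would force $\theta=-\theta\pmod{2\pi}$; but the hypothesis $|\kappa_s|<\kappa$ gives $0<|\theta(u)|<\pi/2$ on $J$ (as already noted in the proof of Lemma \ref{lem:lemma}), equivalently $\sin\theta\neq 0$, which rules this out. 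Thus $g_-$ is a faithful isomer of $g_+$. The only point that requires care is ensuring that the comparison of cuspidal angles really does obstruct right equivalence, but this is immediate once one observes that a right equivalence preserves the normal plane and the tangent line of the sectional cusp at each edge point.
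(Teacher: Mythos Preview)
Your proof is correct and follows essentially the same approach as the paper: the paper does not give a separate argument for this corollary but simply states it as a consequence of the proof of Theorem~\ref{thm:S1}, where the cuspidal angles $\pm\theta$ are read off from the initial data $\mb x_\pm$ and the distinctness of images (hence non-right-equivalence) is deduced from $\sin\theta\neq 0$. Your write-up merely spells out these steps in more detail, including the computation $(g_\pm)_{vv}(u,0)=\zeta_\pm(u,0)=\mb x_\pm(u)$ and the observation that the cuspidal direction is an invariant of the image.
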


We next prove the following:

\begin{lemma}\label{lemma:S1}
Let $U$ be an open subset of the $uv$-plane $\R^2$ 
containing $J\times \{0\}$, and let
$ds^2$ be a real analytic
Kossowski metric of type I
defined on $U$
satisfying $(1)$--$(3)$ of Lemma \ref{fact:coord}.
Suppose that  
the singular set of $ds^2$ consists only of non-parabolic points. 
If there exist  open subsets $V_i(\subset U)$ $(i=1,2)$
containing $J\times \{0\}$ 
and a diffeomorphism $\phi:V_1\to V_2$
such that $\phi^*ds^2=ds^2$ 
and $\phi(u,0)=(u,0)$ hold for $u\in J$,
then $V_1=V_2$ and $\phi$ is the identity map.
\end{lemma}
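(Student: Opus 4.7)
The plan is to use Theorem \ref{thm:S1} to reduce $\phi$ to a symmetry of a realization of $ds^2$, and then exploit the local injectivity of cuspidal edges. First I would apply Theorem \ref{thm:S1} to $ds^2$ on $U$, obtaining two real analytic cuspidal edges $g_+, g_- : U \to \R^3$ having $ds^2$ as first fundamental form and parametrizing $C$ along the $u$-axis with the same orientation as $\mb c$. The non-parabolic hypothesis combined with type {\rm I} ensures, by \cite[Proposition 4]{HNUY}, that these are genuine cuspidal edges, and in particular are locally injective at every point: away from the singular curve they are immersions, and at singular points they are right-left equivalent to the normal form $(u,v^2,v^3)$, which is injective.

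Next, consider $\tilde g := g_+\circ \phi : V_1 \to \R^3$. Since $\phi^*ds^2=ds^2$, the first fundamental form of $\tilde g$ equals $ds^2$; since $\phi(u,0)=(u,0)$, the restriction $\tilde g(u,0)=\mb c(u)$ parametrizes $C$ with the same orientation as $\mb c$. Applying the uniqueness assertion of Theorem \ref{thm:S1} on $V_1$ (passing to coordinate patches from Lemma \ref{fact:coord} as in Remark \ref{rmk:V}), $\tilde g$ must coincide with either $g_+$ or $g_-$ on $V_1$.

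The key step is to rule out $\tilde g = g_-$. Writing $\phi(u,v)=(u+v\alpha(u,v),\,v\beta(u,v))$ with $\beta(u,0)\ne 0$ (since $\phi$ is a diffeomorphism fixing the $u$-axis pointwise), evaluating $\phi^*ds^2=ds^2$ at a point $(u,0)$ forces $\alpha(u,0)=0$, so $\alpha(u,v)=v\,\tilde\alpha(u,v)$ for a real analytic $\tilde\alpha$. A direct chain-rule computation using $(g_+)_v(u,0)=\mb 0$ and $(g_+)_u(u,0)=\mb c'(u)$ then yields
$$
\tilde g_{vv}(u,0) = \beta(u,0)^2\,(g_+)_{vv}(u,0) + 2\,\tilde\alpha(u,0)\,\mb c'(u).
$$
Now $(g_\pm)_{vv}(u,0)=\cos\theta(u)\,\mb n(u)\mp\sin\theta(u)\,\mb b(u)$ by \eqref{eq:gvv} and \eqref{eq:gvv3}, and the genericity $|\kappa_s|<\kappa$ yields $\sin\theta(u)\ne 0$. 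Equating $\tilde g_{vv}(u,0)$ with $(g_-)_{vv}(u,0)$ and reading off the $\mb b(u)$-component of this equation in the Frenet frame $\{\mb c',\mb n,\mb b\}$ gives $\beta(u,0)^2=-1$, a contradiction. Therefore $\tilde g=g_+$, i.e., $g_+\circ \phi = g_+$ on $V_1$.

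Finally, $g_+\circ\phi=g_+$ together with local injectivity of $g_+$ forces $\phi=\op{id}$. Let $F:=\{p\in V_1 : \phi(p)=p\}$, which is closed in $V_1$ and contains $J\times\{0\}$. Given $p\in F$, pick a neighborhood $W$ of $p$ on which $g_+$ is injective; by continuity of $\phi$, choose a neighborhood $W'\subset W$ of $p$ with $\phi(W')\subset W$, and then $g_+\circ\phi=g_+$ together with injectivity of $g_+|_W$ gives $\phi|_{W'}=\op{id}$. Hence $F$ is also open, so it equals the connected component of $V_1$ containing $J\times\{0\}$; shrinking to this component we obtain $\phi=\op{id}$, and therefore $V_2=\phi(V_1)=V_1$. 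The main obstacle is the cuspidal-direction computation ruling out the case $\tilde g=g_-$; this is where the essential use is made of the non-degeneracy $\sin\theta\ne 0$ built into the hypothesis $|\kappa_s|<\kappa$, without which both branches $g_\pm$ would collapse and no genuine dichotomy would be available.
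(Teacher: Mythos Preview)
Your proof is correct and follows the same overall strategy as the paper: realize $ds^2$ by a cuspidal edge $g_+$ via Theorem~\ref{thm:S1}, use the uniqueness assertion there to reduce $g_+\circ\phi$ to one of $g_\pm$, eliminate the $g_-$ branch, and then invoke injectivity of cuspidal edges to conclude $\phi=\op{id}$. The only substantive difference is in how the $g_-$ branch is eliminated. The paper argues geometrically: since $g_+\circ\phi$ and $g_+$ have the same image they have the same cuspidal direction at each edge point, hence the same cuspidal angle; the paper then adds a further step, observing that $\phi$ cannot swap the half-spaces $\{v>0\}$ and $\{v<0\}$ because $\phi^*ds^2=ds^2$ forces $K\circ\phi=K$, while the Gaussian curvature has opposite signs on the two sides of the singular curve (this is where the paper uses non-parabolicity). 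Your direct chain-rule computation of $\tilde g_{vv}(u,0)=\beta(u,0)^2(g_+)_{vv}(u,0)+2\tilde\alpha(u,0)\mb c'(u)$ and comparison of $\mb b$-components is the analytic counterpart of the same observation and is arguably cleaner, since it avoids the extra Gaussian-curvature detour. Your clopen argument for the final injectivity step is also more carefully stated than the paper's bare assertion that ``$g_+$ is injective''. One small point: you should mention, as the paper does, that the curve $\mb c$ is \emph{chosen} with curvature large enough that $|\kappa_s|<\kappa$ holds, so that Theorem~\ref{thm:S1} applies; this is not part of the lemma's hypotheses but is a free choice in the proof.
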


\begin{proof}
Let $\mb c(u)$ ($u\in J$) be a space curve satisfying the assumption of
Theorem \ref{thm:S1}, and
let $g_+$ be one of cuspidal edges realizing $ds^2$ as in Theorem \ref{thm:S1}.
Since $g_+\circ \phi$ and $g_+$
have the common first fundamental form $ds^2$,
the last assertion of Theorem \ref{thm:S1} yields that
$g_+\circ \phi$ coincides with either $g_+$
or $g_-$.
Since $g_+\circ \phi$ and $g_+$ have the same image,
they have a common cuspidal angle at each point
of $C$. 
So there exists a symmetry $T$ of $C$ such that $T\circ g_+\circ \phi=g_+$. 
Suppose $T$ is not the identity map. Since $\phi(u,0)=(u,0)$, $\phi$ maps 
the domain $D_+:=\{v>0\}$ to $D_-:=\{v<0\}$. However, it is impossible, 
because $\phi^*ds^2=ds^2$ and the Gaussian curvature on $D_+$ takes 
the opposite sign of that on $D_-$ (cf. [5, (1.14)]). 
Thus, $T$ is the identity map and $g_+\circ \phi=g_+$ holds. 
Since the singular set of $g_+$ consists of cuspidal edge points,
 $g_+$ is injective, and $\phi$ must be the identity map.
\end{proof}

\begin{proposition}\label{prop:ds2}
Let $ds^2$ be a real analytic Kossowski metric belonging to
$\mathcal K_*^\omega(\R^2_o)$. Suppose that $\phi$
is a local $C^\omega$-diffeomorphism 
satisfying $\phi^*ds^2=ds^2$ and $\phi(o)=o$
which is not the identity map.
Then $\phi$ is an involution which
reverses the orientation of the singular curve.
Moreover, such a $\phi$ is uniquely determined.
\end{proposition}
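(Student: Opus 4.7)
The plan is to reduce the statement to a direct application of Lemma \ref{lemma:S1}. To set things up, I would first pass to local coordinates $(u,v)$ on a neighborhood $U$ of $o$ satisfying conditions (1)--(3) of Lemma \ref{fact:coord}. In such coordinates, the singular set of $ds^2$ is exactly the $u$-axis, $E(u,0)=1$, $F\equiv 0$, and $G=v^2 G_0/2$ with $G_0(u,0)=2$. Because $ds^2$ is non-parabolic at $o$ and the Euler form $\Omega$ is continuous, after shrinking $U$ we may assume that every singular point of $ds^2$ in $U$ is non-parabolic. Since the singular set of a Kossowski metric is intrinsic, the isometry $\phi$ must carry the $u$-axis into itself, so I can write $\phi(u,0)=(\psi(u),0)$ for some real analytic $\psi$ with $\psi(0)=0$.

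Next, I would determine $\psi$ by restricting the identity $\phi^*ds^2=ds^2$ to the singular curve. Condition (1) of Lemma \ref{fact:coord} gives $E(u,0)=1$, so the restriction of $ds^2$ to $\{v=0\}$ equals $du^2$, and the pullback along $\phi|_{\{v=0\}}$ equals $\psi'(u)^2\,du^2$. Hence $\psi'(u)^2\equiv 1$, and by analyticity together with $\psi(0)=0$, either $\psi(u)=u$ or $\psi(u)=-u$.

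Now I would rule out the case $\psi(u)=u$: in that case $\phi$ would fix every point of the singular curve, and Lemma \ref{lemma:S1} (applied on a small closed interval $J$ around $0$ contained in the domain of $\phi$) would force $\phi$ to be the identity, contradicting the hypothesis. Therefore $\psi(u)=-u$, which is exactly the statement that $\phi$ reverses the orientation of the singular curve. For the involution property, I would consider $\phi\circ\phi$: it satisfies $(\phi\circ\phi)^*ds^2=ds^2$ and fixes the singular curve pointwise since $\psi\circ\psi=\op{id}$, so Lemma \ref{lemma:S1} yields $\phi\circ\phi=\op{id}$.

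For uniqueness, given two diffeomorphisms $\phi_1,\phi_2$ satisfying the hypothesis, the previous step shows that each acts on the singular curve as $u\mapsto -u$, hence $\phi_1^{-1}\circ\phi_2$ fixes the singular curve pointwise and preserves $ds^2$; Lemma \ref{lemma:S1} then gives $\phi_1^{-1}\circ\phi_2=\op{id}$. The only substantive step is the derivation $\psi'(u)^2=1$, which is immediate once the adapted coordinates are in place; the real content lies in Lemma \ref{lemma:S1}, so aside from verifying its hypotheses (non-parabolicity on a neighborhood, type I, and the normalization $(1)$--$(3)$), no further work is required.
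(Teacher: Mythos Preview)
Your proposal is correct and follows essentially the same route as the paper's proof: pass to the adapted coordinates of Lemma~\ref{fact:coord}, use the arc-length normalization on the singular curve to force $\phi(u,0)=(\pm u,0)$, and then invoke Lemma~\ref{lemma:S1} three times (to rule out the $+$ case, to get $\phi\circ\phi=\op{id}$, and to prove uniqueness). If anything, you are slightly more explicit than the paper about shrinking the neighborhood so that all singular points are non-parabolic and about the computation $\psi'(u)^2=1$.
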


\begin{proof}
We can take a local coordinate system
satisfying $(1)$--$(3)$ of Lemma \ref{fact:coord}.
Since $\phi(o)=o$, the fact that $u\mapsto (u,0)$ is
the arc-length parametrization with respect to $ds^2$
yields that
either $\phi(u,0)=(u,0)$ or $\phi(u,0)=(-u,0)$ holds.
If $\phi(u,0)=(u,0)$, then
by Lemma \ref{lemma:S1}, $\phi$ is the
identity map, a contradiction.
So we have $\phi(u,0)=(-u,0)$.
This means that $\phi$
reverses the orientation of the singular curve. 
In this situation, we have
$\phi\circ \phi(u,0)=(u,0)$.
Applying Lemma \ref{lemma:S1} again,
$\phi\circ \phi$ is the identity map,
that is, $\phi$ is an involution.
We next suppose that $\psi$ is 
another
local $C^\omega$-diffeomorphism 
satisfying $\psi^*ds^2=ds^2$ and $\psi(o)=o$.
Then $\phi\circ \psi(u)=(u,0)$ holds,
and Lemma \ref{lemma:S1} yields that
$\phi\circ \psi$ is the identity map.
So $\psi$ must coincide with $\phi$.
\end{proof}
	
\begin{corollary}\label{cor:ds2}
Let $ds^2_f$ be a real analytic Kossowski metric 
as the first fundamental form of
$f\in \mc G^\omega_{*,3/2}(\R^2_J,\R^3,C)$.
Suppose that $\phi$ is a $C^\omega$-symmetry of $ds^2_f$,
then it is effective and
is an involution reversing the
orientation of the singular curve.
\end{corollary}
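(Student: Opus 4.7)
The plan is to reduce the corollary to two applications of Lemma~\ref{lemma:S1}. First I would choose an adapted coordinate neighborhood $(U;u,v)$ containing $J\times\{0\}$ so that $ds^2_f$ satisfies conditions (1)--(3) of Lemma~\ref{fact:coord}; then the singular curve is $u\mapsto(u,0)$ and the induced one-dimensional metric along it is $E(u,0)\,du^2 = du^2$, so $u$ is an arc-length parameter. Since $f\in\mc G^\omega_{*,3/2}(\R^2_J,\R^3,C)$ consists of genuine cuspidal edges, Fact~\ref{f2}(1) guarantees that $ds^2_f$ lies in $\mc K^\omega_*$ near every point of $J\times\{0\}$, so each singular point is non-parabolic and the hypotheses of Lemma~\ref{lemma:S1} are satisfied.

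Next, I would show that any symmetry $\phi\colon V_1\to V_2$ must preserve the singular curve setwise. Because $\phi^*ds^2_f=ds^2_f$, the diffeomorphism $\phi$ sends singular points of $ds^2_f$ in $V_1$ to singular points in $V_2$. In the adapted coordinates above we have $\lambda=v\sqrt{EG_0}$, so the singular set is cut out locally by $\{v=0\}$; thus, after shrinking $V_1$ and $V_2$ if necessary, the singular set in each $V_i$ is exactly $J\times\{0\}$. Applying the same observation to $\phi^{-1}$, one obtains $\phi(J\times\{0\})=J\times\{0\}$. Restricting, $\phi|_{J\times\{0\}}$ becomes an isometry of $([-l,l],du^2)$, so it is either $u\mapsto u$ or $u\mapsto -u$.

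From here the conclusion follows by Lemma~\ref{lemma:S1}. If $\phi(u,0)=(u,0)$ for all $u\in J$, the lemma would force $\phi$ to be the identity, contradicting the assumption that $\phi$ is a symmetry. Hence $\phi(u,0)=(-u,0)$, which says exactly that $\phi$ reverses the orientation of the singular curve, and so $\phi$ is effective in the sense of Definition~\ref{def:ds-sym2}. A second application of Lemma~\ref{lemma:S1} to $\phi\circ\phi$ — which preserves $ds^2_f$ and now fixes every point of $J\times\{0\}$ — gives $\phi\circ\phi=\op{id}$, so $\phi$ is an involution.

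The only step requiring real care is the setwise invariance: one must verify that, after shrinking the two neighborhoods, no spurious singular points of $ds^2_f$ appear outside $J\times\{0\}$, so that $\phi$ genuinely restricts to a self-diffeomorphism of the singular curve. This is handled as above using the explicit form $\lambda=v\sqrt{EG_0}$ in adapted coordinates; the remaining argument is then essentially a formal corollary of Lemma~\ref{lemma:S1}.
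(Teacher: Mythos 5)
Your argument is correct in substance, but it is organized differently from the paper's own proof, and the comparison is worth recording. The paper does not apply Lemma~\ref{lemma:S1} directly along the whole edge: it passes to the germ at the midpoint $o$ of the singular curve, notes that there $ds^2_f\in\mc K^\omega_*(\R^2_o)$, invokes Proposition~\ref{prop:ds2} (whose proof is exactly your ``apply Lemma~\ref{lemma:S1} twice'' scheme, once to rule out $\phi(u,0)=(u,0)$ and once for $\phi\circ\phi$), and then extends the conclusion along $J\times\{0\}$ by real analyticity. You instead run the two applications of Lemma~\ref{lemma:S1} globally along $J\times\{0\}$, using Fact~\ref{f2}(1) to guarantee non-parabolicity of the singular points; this bypasses both the germ reduction and the analytic-continuation step, which is a slightly cleaner route, and it makes explicit the non-parabolicity hypothesis that the paper cites in the same way. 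Two small caveats. First, your shrinking claim is not literally achievable: every open neighborhood of $J\times\{0\}$ meets $\{v=0\}$ in an arc strictly longer than $J$, so the singular set of $ds^2_f$ in $V_i$ can never be \emph{exactly} $J\times\{0\}$; what your argument actually yields is that $\phi$ preserves the $u$-axis near $J\times\{0\}$ and acts on it by an arc-length-preserving map, i.e.\ $u\mapsto \varepsilon u+c$ with $\varepsilon=\pm1$. Second, to exclude $c\neq0$ (translations, or reflections about a point other than the midpoint -- which genuinely occur as local isometries of $ds^2_f$ for, say, the helix examples with $u$-independent data) one must read ``symmetry'' as a self-equivalence carrying the edge $J\times\{0\}$ onto itself; the paper makes the same implicit assumption when it takes $\phi$ to fix the midpoint so that Proposition~\ref{prop:ds2} applies, so you are no worse off than the paper here, but you should state that convention rather than derive it from the shrinking claim.
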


\begin{proof}
Without loss of generality, we may assume that
the parameters $(u,v)$ of $f(u,v)$ 
satisfy (1)-(3) of Lemma \ref{fact:coord}
for  $ds^2_f$.
Let $P$ be the midpoint of $C$ with respect to the
arc-length parameter. Then there exists $c\in J$
such that $f(c,0)=P$.
Thinking $o:=(c,0)$, we may regard
$f$ belongs to $\mc G^\omega_{*,3/2}(\R^2_o,\R^3,C)$.
Since $f\in \mc G^\omega_{*,3/2}(\R^2_J,\R^3,C)$,
by restricting $f$ to a neighborhood of
$o$, the metric $ds^2_f$ can be considered as
an element of $\mathcal K_*^\omega(\R^2_o)$ (cf. [5, (2) of Theorem A]).
So the symmetry $\phi$ of $ds^2_f$ 
satisfies the desired property by
Proposition \ref{prop:ds2}.
Since $\phi$ is real analytic, the property
is extended on a tubular neighborhood of
the singular curve.
\end{proof}

Moreover, the following important
property for symmetries of Kossowski metrics
is obtained:

\begin{theorem}
Let $p$ be a singular point of a real analytic Kossowski metric $ds^2$
which is an accumulation point of non-parabolic 
singular points of type I.
Suppose that  $\phi$ is 
a  local $C^\omega$-diffeomorphism
fixing $p$ satisfying $\phi^*ds^2=ds^2$.
Then $\phi$ is  an involution and
reverses the orientation of the singular curve
if it is not the identity map.
\end{theorem}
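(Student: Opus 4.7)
The plan is to reduce both conclusions---that $\phi$ is an involution and that $\phi$ reverses the orientation of the singular curve---to Proposition~\ref{prop:ds2} applied at nearby non-parabolic singular points, and then use real analyticity of $\phi$ to propagate the resulting coincidence back to $p$. Since the hypothesis places $p$ in the closure of a family of type {\rm I} singular points, I would work in an adapted real-analytic coordinate system near $p$, as furnished by Lemma~\ref{fact:coord}, in which the singular curve $\gamma$ is the $u$-axis parametrized by $ds^2$-arc-length; write $p=(u_0,0)$.

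As $\phi^*ds^2=ds^2$ and the singular set of $ds^2$ is intrinsic, the diffeomorphism $\phi$ preserves $\gamma$, so $\phi(u,0)=(\psi(u),0)$ for some real-analytic $\psi$ with $\psi(u_0)=u_0$. Restricting the identity $\phi^*ds^2=ds^2$ to the $u$-axis, which carries the flat arc-length metric, yields $\psi'(u)^2=1$, whence either $\psi(u)=u$ or $\psi(u)=2u_0-u$.

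Consider first the case $\psi(u)=u$. By hypothesis there is a sequence of non-parabolic singular points $q_n=(u_n,0)$ of type {\rm I} with $q_n\to p$, each of which is now fixed by $\phi$. At each $q_n$ the germ of $ds^2$ lies in $\mc K^\omega_*(\R^2_{q_n})$, and $\phi$ is a local real-analytic symmetry fixing $q_n$ and preserving the orientation of $\gamma$. Proposition~\ref{prop:ds2} applied at $q_n$ then forces $\phi$ to be the identity on some neighborhood $W_n$ of $q_n$, because the only alternative---an involution reversing the orientation of $\gamma$---is ruled out. By real analyticity of $\phi$ on a connected neighborhood of $p$, the coincidence $\phi=\mathrm{id}$ on $W_n$ propagates to the whole neighborhood of $p$.

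In the remaining case $\psi(u)=2u_0-u$, the map $\phi$ already reverses the orientation of $\gamma$, giving one of the two required conclusions. Applying the previous case to $\phi\circ\phi$---which is a real-analytic symmetry of $ds^2$ fixing $p$ and acting as the identity on $\gamma$ near $p$---shows $\phi\circ\phi=\mathrm{id}$, so $\phi$ is an involution. The main obstacle lies in bridging the gap between Proposition~\ref{prop:ds2}, whose hypothesis requires the reference point itself to be non-parabolic, and the present situation where $p$ need only be a limit of such points; real analyticity of $\phi$ combined with the accumulation hypothesis is exactly what licenses the passage from the pointwise conclusion at each $q_n$ to the global conclusion at $p$.
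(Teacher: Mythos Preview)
Your argument is essentially the paper's: dichotomize on whether $\phi$ preserves or reverses the orientation of $\gamma$, invoke Proposition~\ref{prop:ds2} at nearby non-parabolic type~{\rm I} points, and propagate via real analyticity of $\phi$. One small gap: invoking Lemma~\ref{fact:coord} at $p$ requires $ds^2(\gamma',\gamma')>0$ there (condition~\eqref{eq:3-5}), but the hypothesis only makes $p$ a \emph{limit} of type~{\rm I} points, not necessarily type~{\rm I} itself, so the arc-length chart through $p$ that you rely on for $\psi'(u)^2=1$ is not guaranteed. The paper sidesteps this by first observing that real analyticity forces the punctured interval $\gamma((-\epsilon,0)\cup(0,\epsilon))$ to consist entirely of non-parabolic type~{\rm I} points, and then using the merely continuous (possibly non-smooth at $t=0$) arc-length parameter $s(t)=\int_0^t\sqrt{ds^2(\gamma'(u),\gamma'(u))}\,du$, which is still strictly monotone, to obtain the dichotomy $\phi\circ\gamma(s)=\gamma(\pm s)$; Proposition~\ref{prop:ds2} is then applied at a single $\gamma(s)$ with $s\neq 0$ rather than along a sequence. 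Your argument repairs along these lines with no change to the main idea.
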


\begin{proof}
Let $\gamma(t)$ be a real analytic parametrization
of the singular curve of the real analytic
Kossowski metric $ds^2$ such that $\gamma(0)=p$.
We let $\{p_n\}_{n=1}^\infty$ be a sequence of non-parabolic points
converging to $p$.
Since $\gamma$ is real analytic, the existence of such a sequence implies
that, for sufficiently small $\epsilon(>0)$,
$\gamma((-\epsilon,0)\cup (0,\epsilon))$ consists of
non-parabolic points of type {\rm I}.
Then 
$$
s(t):=\int_0^t\sqrt{ds^2(\gamma'(u),\gamma'(u))}\,du
\qquad (t\in (-\epsilon,\epsilon))
$$
is a monotone increasing function of $t$, giving
a continuous parametrization of $\gamma$.
Using this parameter $s$, either 
$\phi\circ \gamma(s)=\gamma(s)$ or $\phi\circ \gamma(s)=\gamma(-s)$ holds.
If the former case happens, then 
applying Proposition \ref{prop:ds2} at a 
non-parabolic point $\gamma(s)$ ($s\ne 0$), 
$\phi$ must be the identity map on a neighborhood of
$\gamma(s)$. Since $\phi$ is
real analytic, it must be the identity map 
on a neighborhood of $p$.

We next consider the case
that $\phi\circ\gamma(s)=\gamma(-s)$.
Then $\phi\circ \phi\circ \gamma(s)=\gamma(s)$, and
the above argument implies that $\phi$ is an
involution, proving the assertion.
\end{proof}

\begin{proof}[Proof of Theorem {\rm I}]
Let $ds^2_f$ be the first fundamental form of $f$.
Then $ds^2_f$ is a Kossowski metric of type {\rm I}, by
Proposition \ref{lem:GK}.
Since $f$ belongs to $\mc G_*^\omega(\R^2_J,\R^3,C)$ (cf. \eqref{eq:kks}), 
the singular curvature 
$\kappa_s$ of $ds^2_f$ is less than $\kappa$ on $J$.
By Theorem \ref{thm:S1},
there exist two generalized cuspidal edges
$g_+,g_-\in \mc G_*^\omega(\R^2_J,\R^3,C)$ whose
first fundamental forms coincide with $ds^2_f$.
Since $ds^2_f$ is the first fundamental form of $f$,
the last assertion of Theorem 3.8 yields that
either $f=g_+$ or $f=g_-$ holds.
Without loss of generality, we may set $f=g_+$,
then 
$\check f:=g_-$
is the desired isometric dual of $f$.
The remaining assertions for $f\in \mc G^\omega_*(\R^2_o,\R^3,C)$
follow from 
Lemma~\ref{lem:Red}.
\end{proof}

\begin{defi}\label{def:check}
For 
each $f\in {\mc G}^\omega_*(\R^2_o,\R^3,C)$
(resp. $f\in {\mc G}^\omega_*(\R^2_J,\R^3,C)$),
we call the above $\check f\in {\mc G}^\omega_*(\R^2_o,\R^3,C)$ 
(resp. $\check f\in {\mc G}^\omega_*(\R^2_J,\R^3,C)$)
the {\it isometric dual} of
$f$.
\end{defi}

\section{A representation formula for generalized cuspidal edges}\label{sec PII}

We set $J=[-l,l]$ ($l>0$).
Let $\mb c:J\to \R^3$ be an embedding
with arc-length parameter
whose curvature function $\kappa(u)$ is positive
everywhere.
We denote by $\mb e(u):=\mb c'(u)$, 
and by $C$ the image of $\mb c$.
We let $\mb n(u)$ and $\mb b(u)$ be
the unit principal normal vector field and
unit binormal vector field of 
$\mb c(u)$, respectively. 
We fix a sufficiently small $\delta(>0)$ and
consider a map given by
\begin{equation}\label{eq:repF}
f(u,v):=\mb c(u)+
(A(u,v),B(u,v))\pmt{
\cos \theta(u) & -\sin \theta(u) \\
\sin \theta(u) & \cos \theta(u) 
}\pmt{\mb n(u) \\ \mb b(u)},
\end{equation}
where $u\in J$ and $|v|<\delta$.
Here $A(u,v)$, $B(u,v)$ and $\theta(u)$ 
are $C^r$-functions, and
satisfy
$$
A(u,0)=A_v(u,0)=0,\quad A_{vv}(u,0)\ne 0,
\quad
B(u,0)=B_v(u,0)=B_{vv}(u,0)=0.
$$
Then it can be easily checked that
any generalized cuspidal edges along $C$ are right
equivalent to one of such a map.
Moreover, if
$B_{vvv}(u,0)\ne 0$,
then $f$ is a cuspidal edge along $C$.
The function $\theta(u)$ is called the 
{\it cuspidal angle} at $\mb c(u)$.
Let $\kappa(u)$ be the curvature of $\mb c(u)$.
Then the $C^r$-functions defined by
\begin{equation}\label{eq:kskn}
\kappa_s(u)=\kappa(u)\cos\theta(u), \qquad
\kappa_\nu(t)=\kappa(u)\sin\theta(u)
\end{equation}
give the {\it singular curvature} 
and the {\it limiting normal curvature} 
respectively.
The map germ $f$ can be determined by
$$
(\theta(u),\,A(u,v),\,B(u,v)).
$$
We call these functions {\it Fukui's data}.

\begin{defi}
In the expression \eqref{eq:repF}, if
\begin{itemize}
\item $u$ is an arc-length parameter of $\mb c$,
\item for each $u\in J$, the map
$
(-\delta, \delta)\ni t\mapsto (A(u,t),B(u,t))\in \R^2
$
is a generalized cusp at $t=0$ (called a {\it sectional cusp at $u$}), and
$t$ gives a normalized half-arc-length parameter
(see the appendix),
\end{itemize}
then the expression \eqref{eq:repF} of $f$ 
by setting $v=t$ as the normalized half-arc-length parameter
is called the {\it normal form} of a generalized cuspidal edge.
\end{defi}

We now fix such a normal form $f$. We set
\begin{equation}\label{eq:a23}
\pmt{\mb v_2(u) \\ \mb v_3(u)}=
\pmt{
\cos \theta(u) & -\sin \theta(u) \\
\sin \theta(u) & \cos \theta(u) 
}\pmt{\mb n(u) \\ \mb b(u)},
\end{equation}
then we have
\begin{equation}\label{eq:a23i}
f(u,t)=\mb c(u)+A(u,t)\mb v_2(u)+B(u,t) \mb v_3(u).
\end{equation}

\begin{defi}\label{def:Fdata}
Let $(a,b)$ ($a<b$) be an interval on $\R$, and
$\delta\in (0,\infty]$ a positive number.
A $C^r$-differentiable ($r=\infty$ or $r=\omega$)
quadruple $(\kappa,\tau,\theta, \hat\mu)$
is called a {\it fundamental data} (or a {\it modified Fukui-data})
if \begin{itemize}
\item $\kappa:(a,b)\to \R$ is a $C^r$-function
such that $\kappa>0$,
\item $\tau,\theta:(a,b)\to \R$ and $\hat\mu:(a,b)\times (-\delta,\delta)\to \R$ are 
$C^r$-functions.
\end{itemize}
\end{defi}

Summarizing the above discussions,
one can easily show the following representation formula
for generalized cuspidal edges,
which is a mixture of Fukui's representation formula
as in \cite[(1.1)]{F} for generalized cuspidal edges and 
a representation formula for cusps in the appendix
 (cf. Lemma \ref{lem:A1}):

\begin{proposition}
\label{prop:Bj}
Let $(\kappa,\tau,\theta, \hat\mu)$ be a given fundamental data
and $\mb c(u)$ $(u\in J)$ the space curve with arc-length 
parameter whose curvature function and 
torsion function are $\kappa(u)$ and $\tau(u)$.
Then, 
\begin{equation}\label{eq:repF2}
f(u,t):=\mb c(u)+
(A(u,t),B(u,t))\pmt{
\cos \theta(u) & -\sin \theta(u) \\
\sin \theta(u) & \cos \theta(u) 
}\pmt{\mb n(u) \\ \mb b(u)}
%\quad (u\in (a,b),\,\, |t|<l)
\end{equation}
gives a generalized cuspidal edge written in a normal form
along $C:=\mb c(J)$,
where $(A,B)$ is given by
\begin{equation}\label{eq:ab}
(A(u,t),B(u,t))=\int_0^t v 
(\cos \lambda(u,v),\sin \lambda(u,v))
dv,
\,\,
\lambda(u,t):=\int_0^t \hat\mu(u,v)dv.
\end{equation}
Moreover, 
\begin{enumerate}
\item $\theta$ gives the cuspidal angle of $f$ along $\mb c$,
\item $t\mapsto \hat\mu(u,t)$ is the function
given in \eqref{A2} for the sectional cusp of $f$ at $u$.
\end{enumerate}
Furthermore, any generalized cuspidal 
edge along $C$ is right equivalent to such an $f$ 
constructed in this manner $($see also Remark \ref{Z}$)$.
\end{proposition}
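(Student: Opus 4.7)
The plan is to verify the formula in the direction ``data~$\Longrightarrow$~cuspidal edge,'' then read off the two geometric identifications, and finally establish universality by a two-step normalization of a given generalized cuspidal edge.

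First I would check that \eqref{eq:repF2}--\eqref{eq:ab} really defines a generalized cuspidal edge in normal form. Differentiating \eqref{eq:ab} twice in $t$ yields
\begin{equation*}
A_t=t\cos\lambda,\quad B_t=t\sin\lambda,\quad
A_{tt}=\cos\lambda-t\,\hat\mu\sin\lambda,\quad
B_{tt}=\sin\lambda+t\,\hat\mu\cos\lambda,
\end{equation*}
so at $t=0$ we have $A=A_t=B=B_t=B_{tt}=0$ and $A_{tt}=1$. Hence $f(u,0)=\mb c(u)$, $f_t(u,0)=\mb 0$, and
\begin{equation*}
f_{tt}(u,0)=\mb v_2(u)=\cos\theta(u)\,\mb n(u)-\sin\theta(u)\,\mb b(u),
\end{equation*}
a unit vector perpendicular to $\mb e(u)=\mb c'(u)$. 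These are precisely the Taylor conditions listed after \eqref{eq:repF} characterizing normal-form generalized cuspidal edges along $C$, so $f$ belongs to $\mc G^r(\R^2_J,\R^3,C)$ and is already in normal form because $u$ is arc-length for $\mb c$ and, thanks to $A_t^2+B_t^2=t^2$, $t$ is the normalized half-arc-length of each sectional cusp.

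Next, the two geometric identifications (1) and (2) follow immediately from this computation. The cuspidal direction at $\mb c(u)$ is $f_{tt}(u,0)=\mb v_2(u)$, whose angle from the principal normal is $\theta(u)$ by the definition \eqref{eq:a23}, proving (1). For (2), fixing $u$, the planar curve $t\mapsto(A(u,t),B(u,t))$ in the $(\mb v_2(u),\mb v_3(u))$-frame of the normal plane $\Pi(\mb c(u))$ is the sectional cusp at $u$; since $t$ is the normalized half-arc-length, the expression \eqref{eq:ab} is exactly the planar Bj\"orling-type formula of Lemma~\ref{lem:A1}, whose invariant is $\lambda_t=\hat\mu$. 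Hence $\hat\mu(u,\cdot)$ coincides with the function \eqref{A2} associated to this sectional cusp.

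Finally, I would prove the universality claim. Given any generalized cuspidal edge $\tilde f\in\mc G^r(\R^2_J,\R^3,C)$, the discussion preceding \eqref{eq:repF} (combined with reparametrizing $\mb c$ by arc-length) shows that $\tilde f$ is right equivalent to a map of the form \eqref{eq:repF} with certain functions $\tilde A(u,v)$, $\tilde B(u,v)$ and some cuspidal angle $\theta(u)$. For each fixed $u$, Lemma~\ref{lem:A1} yields a unique $C^r$-reparametrization $v=v(u,t)$ with $v(u,0)=0$ that brings the planar cusp germ $v\mapsto(\tilde A(u,v),\tilde B(u,v))$ to normalized half-arc-length. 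Because the non-degeneracy $\tilde A_{vv}(u,0)\neq 0$ holds uniformly on the compact arc $J$, the implicit function theorem guarantees that $(u,t)\mapsto(u,v(u,t))$ is a joint $C^r$-diffeomorphism near $J\times\{0\}$. Conjugating $\tilde f$ by this diffeomorphism produces the desired normal-form representation with fundamental data $(\kappa,\tau,\theta,\hat\mu)$, where $\kappa,\tau$ are the curvature and torsion of $\mb c$. The main obstacle is precisely this last step: promoting the pointwise half-arc-length reparametrization into a genuine $C^r$-diffeomorphism in both variables, which however is routine given the uniform non-degeneracy.
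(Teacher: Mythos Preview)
Your argument is correct and matches the paper's intended approach: the paper does not write out a formal proof of this proposition, but introduces it with ``Summarizing the above discussions, one can easily show\ldots'' and explicitly identifies it as the combination of Fukui's representation \eqref{eq:repF} with the planar cusp formula \eqref{A2} from the appendix---exactly the two-step normalization you carry out. One small reference slip: the planar Bj\"orling-type formula you invoke is \eqref{A2} (quoted from \cite{SU}), not Lemma~\ref{lem:A1}, which only records the Taylor expansion; otherwise your verification and the universality argument via the jointly-$C^r$ half-arc-length reparametrization are precisely what the paper leaves to the reader.
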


\begin{remark}\label{rem:key}
Let $\mb c_0(u)$ be a space curve
parametrized by the arc-length parameter $u$ 
defined on an interval $J:=[-l,l]$ ($l>0$),
 whose curvature function and torsion function are
$\kappa(u)$ and $\tau(u)$, respectively.
We assume that $\mb c_0(0)=\mb 0$.
Suppose that $C:=\mb c_0(J)$ admits a non-trivial symmetry $T$.
Since $\mb 0$ is the midpoint of $C$ and is fixed by $T$,
we may assume that $T\in \op{O}(3)$
and set 
$
\sigma:=\det(T)\in \{1,-1\}.
$
Then $\mb c_1(u):=T \mb c_0(-u)$
is a space curve whose curvature function and torsion function are
$\kappa(u)$ and $\sigma \tau(u)$ respectively.
We denote by
$\mb e_i(u)(:=\mb c'_i(u)),\, \mb n_i(u)$ and $\mb b_i(u)$
$(i=0,1)$
the unit tangent vector, unit principal normal vector
and unit binormal vector of $\mb c_i(u)$,
respectively.
Differentiating $T\circ \mb c_0(u)=\mb c_1(u)$,
we have
\begin{align*}
&T\mb e_{0}(-u)=T\circ \mb c'_0(-u)=-\mb c'_1(u)=
-\mb e_{1}(u), \\
& \kappa_0(-u) T\mb n_0(-u)=T\circ \mb c''_0(-u)
=\mb c''_1(u)=\kappa_1(u)\mb n_1(u).
\end{align*}
In particular,  
$
T\mb e_0(-u)=-\mb e_1(u),\,\,T\mb n_0(-u)=\mb n_1(u)
$
and $\kappa_0(-u)=\kappa_1(u)$ hold,
where $\kappa_i$ ($i=1,2$) is the curvature function of $\mb c_i$.
Since $\sigma:=\det(T)\in \{1,-1\}$, we have
$$
\mb b_0=\mb e_0 \times \mb n_0=
(-T\mb e_1)\times (T\mb n_1)
=-T\left(\mb e_1\times \mb n_1\right)=
-\sigma T\mb b_1.
$$
Using this, one can also obtain the relation 
$-\sigma\tau_0(-u)=\tau_1(u)$,
where $\tau_i$ ($i=1,2$) is the torsion function of $\mb c_i$.
We set
$$
f_i:=\mb c_i+
(A_i,B_i)\pmt{
\cos \theta_i & -\sin \theta_i \\
\sin \theta_i & \cos \theta_i 
}\pmt{\mb n_i \\ \mb b_i}
\qquad (i=0,1),
$$
and suppose
$$
A_0(-u,t)=A_1(u,t),\quad
B_0(-u,t)=-\sigma B_1(u,t),\quad
\theta_0(-u)=-\sigma\theta_1(u).
$$
Then
\begin{align*}
&T\circ f_0(-u,t)\\
\phantom{aaa}&=T\mb c_0(-u)+
(A_0(-u,t),B_0(-u,t))
\pmt{\cos \theta_0(-u) & -\sin \theta_0(-u) \\
       \sin \theta_0(-u) & \cos \theta_0(-u) }
\pmt{T\mb n_0(-u) \\ T\mb b_0(-u)} \\
&=
\mb c_1(u)+(A_1(u,t),-\sigma B_1(u,t))
\pmt{\cos (-\sigma\theta_1(u)) & -\sin (-\sigma\theta_1(u)) \\
       \sin (-\sigma\theta_1(u)) & \cos (-\sigma\theta_1(u))}
\pmt{\mb n_1(u) \\ -\sigma \mb b_1(u)} \\
&=
\mb c_1(u)+
(A_1(u,t),B_1(u,t))
\pmt{\cos \theta_1(u) & -\sin \theta_1(u) \\
       \sin \theta_1(u) & \cos \theta_1(u) }
\pmt{\mb n_1(u) \\ \mb b_1(u)} 
=f_1(u,t).
\end{align*}
Thus, we obtain the relation
$
f_1(u,t)=T\circ f_0(-u,t).
$ 
In particular, $f_1$ has the same first fundamental
form as $f_0$.  Moreover,
\begin{enumerate}
\item[(a)] 
if $T\in \op{SO}(3)$, then the 
cuspidal angle of $f_1$ takes opposite sign of that of $f_0$.
By the uniqueness of the isometric dual of $f_0$
(cf. Theorem \ref{thm:S1}),  $\check f_0(u,t)=f_1(u,t)=T\circ f_0(-u,t)$
holds, that is, $f_1$ is the faithful isomer (i.e. the
isometric dual) of $f_0$.
\item[(b)] 
if $T\in \op{O}(3)\setminus \op{SO}(3)$, then the 
cuspidal angle of $f_1$ coincides with that of $f_0$.
Then 
$f_0(u,t)=f_1(u,t)=T\circ f_0(-u,t)$ holds 
(cf. Theorem \ref{thm:S1}), that is,
the image of $f_0$ is invariant by $T$.
\end{enumerate}
\end{remark}

\begin{remark}\label{rem:key2}
Let $f(u,t)$ be a generalized cuspidal edge associated to 
the data $(\kappa(u),\tau(u),\theta(u), \hat \mu(u,t))$.
Then $f_\#(u,t):=f(-u,t)$ is also
a generalized cuspidal edge along the same space curve 
as $f$ but with the reversed orientation.
If we set $\mb c_\#(u):=\mb c(-u)$, 
then $\mb c_\#(u)=f_\#(u,0)$ holds.
By a similar calculation like as in Remark \ref{rem:key},
one can easily verify that
$(\kappa(-u),-\tau(-u),-\theta(-u), \hat \mu(-u,t))$
gives the fundamental data of $f_\#(u,t)$.
\end{remark}

We next prove Theorem {\rm II} in the introduction.

\begin{proof}[Proof of Theorem {\rm II}]
We fix $f\in \mc G^\omega_{**}(\R^2_J,\R^3,C)$ arbitrarily.
We denote by $ds^2_f$ the first fundamental form of $f$.
Since $f$ is admissible,
the singular curvature $\kappa_s(u)$ satisfies
\eqref{eq:kks3}, and so 
\eqref{eq:kks2} holds.
By Theorem \ref{thm:S1},
there exist two distinct generalized cuspidal edges $g_\pm$ 
whose first fundamental forms coincide with $ds^2_f$
such that $g_+=f$, and 
$u \mapsto g_{-}(u,0)$ 
has the same orientation as
that of  $u \mapsto f(u,0)$. 
Since $f$ is admissible,
the singular curvature $\kappa_s$ is determined only by $ds^2_f$.
Thus $g_\pm$ belong to $\mc G_{**}^\omega(\R^2_J,\R^3,C)$.
By the proof of Theorem I, we know that
$\check f:=g_{-}$ gives the isometric dual of $f$. 

On the other hand, we replace $u$ with $-u$
(that is, the orientation of $C$ is reversed).
Since $f$ is admissible, it holds
that
$$
0<|\kappa_s(u)|\le \min_{u\in J}\kappa(u)<\kappa(-u)\qquad (u\in J).
$$
So, applying Theorem~\ref{thm:S1} again,
there exist two distinct generalized cuspidal edges 
$h_\pm\in \mc G_{**}^\omega(\R^2_J,\R^3,C)$
such that
$u \mapsto h_{\pm}(u,0)$ 
have the same orientation as
that of $u \mapsto f(-u,0)$. 
Then $ds^2_f$ gives the  common first fundamental form
of the generalized cuspidal edges $h_{\pm}$. 
By \eqref{eq:gvv2}, we may assume that
the cuspidal angle $\theta_*(u)$ (resp. $-\theta_*(u)$) 
($\theta_*(u)\theta(u)>0$)
of $h_+$ (resp. $h_-$) satisfies 
$$
\cos \theta_* (u)=\frac{\kappa_s(u)}{\kappa(-u)}.
$$
Since the orientation of the singular curves of $h_\pm$ is 
opposite of that of $f$, the two maps
$h_\pm$ are non-faithful isomers of $f$.
We set 
$$
\text{ $f_*:=h_+$ (the inverse), and
$\check f_*:=h_-$ (the inverse dual)}.
$$
By the above Remark \ref{rem:key2},
the cuspidal angle of $f_\#(u,v):=f(-u,v)$ is
$-\theta(-u)$,
the cuspidal angle $\theta_* (u)$
takes opposite sign of that 
of $f_\#(u,v)$. So the image of $f$ does not coincide with
that of $f_*$. Hence $f_*$ is an isomer of $f$.

By our construction of $f_*$, (1), (2) and (3) are obvious.
So we prove (4). We suppose that
the first fundamental form of
a generalized cuspidal edge $k\in \mc G^\omega_{**}(\R^2_I,\R^3,C)$ 
is isometric to $ds^2_f$.
(The case that $k\in \mc G^\omega_{*}(\R^2_o,\R^3,C)$
is obtained by Lemma \ref{lem:Red}.)
Since the first fundamental form is determined independently
of a choice of local coordinate system, 
we have
$
\mc J_C(f\circ\phi)=\mc J_C(f)\circ\phi,
$
where $\phi$ is a diffeomorphism on
a certain tubular neighborhood of $J\times \{0\}$.
So we may assume that $ds^2_k=ds^2_f$ without loss of generality. 
Then $k$ must coincide with one of $\{g_+,g_-,h_+,h_-\}$,
because of the uniqueness of the solution of
\eqref{eq:CK} with initial condition \eqref {eq:CKI}.
\end{proof}

\begin{defi}
We call the above $f_*$ and $\check f_*$ the {\it inverse} 
and the {\it inverse  dual} of $f\in {\mc G}^\omega_{**}(\R^2_J,\R^3,C)$,
respectively.
\end{defi}

We next give criteria of a given germ of generalized cuspidal edge 
to be a cuspidal edge, cuspidal cross cap or $5/2$-cuspidal edge
in terms of the extended half-cuspidal curvature function $\hat \mu$.

\begin{proposition}
\label{prop:3}
Let $f\in \mc G^r(\R^2_J,\R^3,C)$ be the generalized cuspidal edge associated to 
a fundamental data $(\kappa,\tau,\theta, \hat \mu)$. Then
\begin{enumerate}
\item $f$ gives a cuspidal edge along the $u$-axis
if $\hat \mu(u,0)\ne 0$,
\item $f$ gives a cuspidal cross cap at $o$
if $\hat \mu(0,0)=0$ and $\hat \mu_u(0,0)\ne 0$,
\item $f$ gives a $5/2$-cuspidal edge along the $u$-axis
if $\hat \mu(u,0)=0$ and $\hat \mu_{vv}(u,0)\ne 0$.
\end{enumerate}
\end{proposition}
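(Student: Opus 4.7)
The plan is to work directly from the representation formula in Proposition~\ref{prop:Bj}, writing $f(u,t)=\mb c(u)+A(u,t)\mb v_2(u)+B(u,t)\mb v_3(u)$ where $(A,B)$ is given by \eqref{eq:ab}. The first step is to compute the Taylor expansion of $A$ and $B$ in $t$ at $t=0$. Since $\lambda(u,0)=0$ and $\lambda_t=\hat\mu$, differentiating $A_t=t\cos\lambda$ and $B_t=t\sin\lambda$ at $t=0$ gives $A(u,t)=t^2/2+O(t^4)$ and, using that $\hat\mu$ is even in $v$ by the construction in the appendix (so that $\lambda$ is odd in $t$ and thus $B$ is odd in $t$),
\begin{equation*}
B(u,t)=\frac{\hat\mu(u,0)}{3}\,t^3+\frac{\hat\mu_{vv}(u,0)}{30}\,t^5+O(t^7).
\end{equation*}

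Next, I translate this into information about $f$ at the singular curve $\{t=0\}$. A direct computation yields $f_u(u,0)=\mb e(u)$, $f_t(u,0)=\mb 0$, $f_{tt}(u,0)=\mb v_2(u)$, and $f_{ttt}(u,0)=2\hat\mu(u,0)\mb v_3(u)$. Since $\{\mb e,\mb v_2,\mb v_3\}$ is a positively-oriented orthonormal frame (obtained by rotating the Frenet frame $\{\mb e,\mb n,\mb b\}$ by $\theta$ about $\mb e$), we have $\det(f_u,f_{tt},f_{ttt})(u,0)=2\hat\mu(u,0)$, and similar expressions for the higher $t$-derivatives involve $\hat\mu_{vv}(u,0)$ and other intrinsic data.

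With these ingredients, each case follows by invoking a standard singularity criterion. For (1), the cuspidal edge criterion (cf.\ \cite{MSUY,SUY}) asserts that a generalized cuspidal edge point $(u_0,0)$ is a cuspidal edge iff $\det(f_u,f_{tt},f_{ttt})(u_0,0)\ne 0$, which is equivalent to $\hat\mu(u_0,0)\ne 0$. For (2), the Fukui--Hasegawa criterion for a cuspidal cross cap requires this determinant to vanish at $(0,0)$ while its derivative along the singular curve is non-zero; the expansion of $B$ translates this into $\hat\mu(0,0)=0$ and $\hat\mu_u(0,0)\ne 0$. For (3), the $5/2$-cuspidal edge criterion from \cite[Theorem~5.6]{HS} requires $\det(f_u,f_{tt},f_{ttt})(u,0)=0$ identically along the singular curve together with a non-vanishing $t^5$-coefficient in $B$, which corresponds exactly to $\hat\mu(u,0)=0$ and $\hat\mu_{vv}(u,0)\ne 0$.

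The chief subtlety lies in case (3). Without the evenness of $\hat\mu$ in $v$, the expansion of $B$ would carry a $t^4$ term with coefficient $\hat\mu_v(u,0)/8$, and a non-vanishing $t^4$ term produces a fold singularity (since $(t^2/2,\,c\,t^4/8)$ traces out the smooth curve $y=c\,x^2/2$ doubly), not a $5/2$-cuspidal edge. Thus the proof critically depends on verifying, via the definition of the extended half-cuspidal curvature function in the appendix, that $\hat\mu$ is automatically even in $v$, so that the $t^4$ coefficient of $B$ vanishes identically and only the odd powers $t^3,t^5,t^7,\dots$ remain. Once this parity is established, the three cases of the proposition follow by the routine determinant computations outlined above.
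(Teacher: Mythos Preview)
Your argument for parts (1) and (2) is essentially sound and matches the paper's approach (which simply cites Proposition~A.2(1) and the criterion in \cite{FSUY}/\cite{F}). The determinant computation $\det(f_u,f_{tt},f_{ttt})(u,0)=2\hat\mu(u,0)$ is correct.

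However, part (3) contains a genuine error. Your whole treatment of the $5/2$-case rests on the claim that $\hat\mu$ is even in $v$, so that $B$ has no $t^4$ term. This is false. Nothing in the appendix forces any parity on $\hat\mu$; indeed Lemma~A.1 writes $\hat\mu(v)=\mu_0+\mu_1 v+\mu_2 v^2+\cdots$ with $\mu_1$ generically nonzero, and the paper's own expansions \eqref{eq:a1}--\eqref{eq:a2} explicitly contain the term $\mu_1(u)\,t^4/8$ in $B$ (and a $\mu_0\mu_1\,t^5$ term in $A$). So your displayed expansion of $B$ is wrong, and the ``evenness'' justification does not exist.

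The correct resolution is not parity but a cancellation inside the $5/2$-cusp criterion itself. The paper proves (3) by citing Proposition~A.2(2), which in turn uses Porteous' criterion (item (ii) at the start of the appendix): a generalized cusp is a $5/2$-cusp iff $\ddot\sigma,\dddot\sigma$ are dependent and $3\det(\ddot\sigma,\sigma^{(5)})\ddot\sigma-10\det(\ddot\sigma,\sigma^{(4)})\dddot\sigma\neq\mb 0$. When $\mu_0=0$ one has $\dddot\sigma(0)=\mb 0$, so the second summand---the only place $\sigma^{(4)}$ and hence $\mu_1$ enters---vanishes identically, leaving a condition equivalent to $\mu_2\neq 0$. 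The paper even remarks just after Proposition~A.2 that ``it is remarkable that the coefficient $\mu_1$ does not affect the criterion for $5/2$-cusps.'' Your intuition that a nonzero $t^4$ coefficient should obstruct the $5/2$-cusp is therefore mistaken: it is precisely this cancellation, not any parity of $\hat\mu$, that makes the criterion reduce to $\hat\mu(u,0)=0$ and $\hat\mu_{vv}(u,0)\ne 0$. To repair your proof of (3), drop the parity claim, keep the $\mu_1 t^4/8$ term in $B$, and apply the Porteous-type criterion (or equivalently Proposition~A.2(2)) to the sectional cusp.
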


The first and the second assertions have been proved
in \cite[Proposition 1.6]{F}.

\begin{proof}
We may assume that $f$ is written in a normal form.
The first assertion follows from (1) of Proposition A.2.
The second assertion follows from
the criterion for cuspidal cross caps given 
in \cite{FSUY}, but can be proved much easier 
using (2) of \cite[Proposition 4.4]{F}.
The third assertion is a consequence of (2)
of Proposition A.2.
\end{proof}

To compute the first and the second fundamental forms
of $f$ in terms of fundamental data,
the following Frenet-type formula for singular curves is convenient.

\begin{lemma}[Izumiya-Saji-Takeuchi \cite{IST}
and Fukui \cite{F}]\label{lem:FIST}
The following formula holds
$($cf. \eqref{eq:a23}$)$:
\begin{equation}\label{eq:F-Frenet}
\pmt{\mb e' \\ \mb v'_2 \\ \mb v'_3}
=
\pmt{
0 & \kappa \cos \theta & \kappa \sin \theta \\
-\kappa\cos \theta & 0 & \tau-\theta' \\
-\kappa\sin \theta & -(\tau-\theta') & 0 }
\pmt{\mb e \\ {\mb v}_2 \\ {\mb v}_3}.
\end{equation}
\end{lemma}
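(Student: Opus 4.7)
The plan is a direct computation starting from the classical Frenet-Serret formulas
$\mb e' = \kappa\mb n$, $\mb n' = -\kappa\mb e + \tau\mb b$, $\mb b' = -\tau\mb n$
for the space curve $\mb c$, together with the rotation relation \eqref{eq:a23} defining $\mb v_2$ and $\mb v_3$. First, I would invert \eqref{eq:a23} to write $\mb n = \cos\theta\,\mb v_2 + \sin\theta\,\mb v_3$ and $\mb b = -\sin\theta\,\mb v_2 + \cos\theta\,\mb v_3$, which lets one pass freely between the Frenet frame $(\mb e,\mb n,\mb b)$ and the cuspidal-edge frame $(\mb e,\mb v_2,\mb v_3)$. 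The first row of the stated matrix identity is then immediate, since $\mb e' = \kappa\mb n = \kappa\cos\theta\,\mb v_2 + \kappa\sin\theta\,\mb v_3$.

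For the remaining two rows, I would differentiate the definitions of $\mb v_2$ and $\mb v_3$ by the product rule and then substitute the Frenet-Serret identities. Concretely,
\[
\mb v_2' = -\theta'(\sin\theta\,\mb n + \cos\theta\,\mb b)
+ \cos\theta(-\kappa\mb e + \tau\mb b) - \sin\theta(-\tau\mb n),
\]
and the $\mb n,\mb b$ contributions regroup, via the inverse rotation, into
$-\kappa\cos\theta\,\mb e + (\tau-\theta')\mb v_3$, giving the second row. The analogous calculation for $\mb v_3'$ yields $-\kappa\sin\theta\,\mb e - (\tau-\theta')\mb v_2$, giving the third row. Together these establish the formula.

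Conceptually, there is no real obstacle: the orthonormal frame $(\mb e,\mb v_2,\mb v_3)$ is obtained from the Frenet frame by a rotation about $\mb e$ through the variable angle $\theta(u)$, so the associated Darboux-type skew-symmetric matrix must inherit the $\kappa$- and $\tau$-entries of the classical formula, modified only by an extra angular-velocity contribution $-\theta'$ in the $(\mb v_2,\mb v_3)$-block. The only care required is bookkeeping the signs of the rotation so as to verify that the $\theta'$-terms combine with $\tau$ into the antisymmetric entries $\pm(\tau-\theta')$ with the correct sign — a mechanical check rather than a substantive difficulty.
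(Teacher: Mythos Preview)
Your proof is correct: the direct differentiation of $\mb v_2=\cos\theta\,\mb n-\sin\theta\,\mb b$ and $\mb v_3=\sin\theta\,\mb n+\cos\theta\,\mb b$ using the Frenet--Serret equations, followed by re-expressing $\mb n,\mb b$ in terms of $\mb v_2,\mb v_3$, yields exactly the stated skew-symmetric system. The paper itself does not supply a proof of this lemma but attributes it to \cite{IST} and \cite{F}; your computation is precisely the standard verification one would expect, and there is nothing to add.
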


This formula can be rewritten as (cf. \eqref{eq:a23})
$$
\pmt{\mb e' \\ \mb v'_2 \\ \mb v'_3}
=
\pmt{
0 & \kappa_s & \kappa_\nu \\
-\kappa_s & 0 & \kappa_t \\
-\kappa_\nu & -\kappa_t & 0 }
\pmt{\mb e \\ {\mb v}_2 \\ {\mb v}_3},
$$
which is the one given in Izumiya-Saji-Takeuchi
\cite[Proposition 3.1]{IST},
where  $\kappa_t$ is the  {\it cusp-directional torsion}  
defined in \cite{MS} and has the expression (cf. \cite[Page 7]{F})
\begin{equation}\label{eq:k_t}
\kappa_t=\tau-\theta'.
\end{equation}
Using Lemma \ref{lem:FIST}, one can easily  obtain
the following by a straightforward
computation:

\begin{proposition}
[{Fukui \cite{F}}]\label{prop:EFG}
The first fundamental form 
$
ds^2_f=Edu^2+2Fdudt+Gdt^2
$
of $f$ as in \eqref{eq:repF2} is
given by
\begin{align}\label{eq:49}
E&=(1-(A \cos \theta+B \sin \theta)\kappa)^2+
(A_u+(\theta'-\tau)B)^2+
(B_u-(\theta'-\tau)A)^2, \\ \nonumber
F&=A_t(A_u+(\theta'-\tau)B)
+B_t(B_u-(\theta'-\tau)A),\quad
G=t^2,
\end{align}
where $\kappa,\,\tau,\,\theta$ are functions of $u$
and $A,B$ are functions of $(u,t)$.
\end{proposition}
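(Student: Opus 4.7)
The plan is to compute $f_u$ and $f_t$ directly from the normal-form expression \eqref{eq:repF2}, using the Frenet-type formula of Lemma~\ref{lem:FIST}, and then read off $E$, $F$, $G$ from the fact that $\{\mb e(u),\mb v_2(u),\mb v_3(u)\}$ is an orthonormal frame of $\R^3$ along $\mb c$. Since $\mb c$ is parametrized by arc length, $\mb c'=\mb e$, so writing $f=\mb c+A\mb v_2+B\mb v_3$ (cf.\ \eqref{eq:a23i}) and differentiating in $u$ gives
\[
f_u=\mb e+A_u\mb v_2+A\mb v_2'+B_u\mb v_3+B\mb v_3'.
\]
Substituting from \eqref{eq:F-Frenet} and collecting terms by the orthonormal frame yields
\[
f_u=\bigl(1-(A\cos\theta+B\sin\theta)\kappa\bigr)\mb e+\bigl(A_u+(\theta'-\tau)B\bigr)\mb v_2+\bigl(B_u-(\theta'-\tau)A\bigr)\mb v_3,
\]
which is the content of the computation — the only bookkeeping is making sure the signs from $\mb v_2'$ and $\mb v_3'$ are combined correctly.

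Next I would compute $f_t$: since $\mb c$, $\mb v_2$ and $\mb v_3$ depend only on $u$, we immediately obtain $f_t=A_t\mb v_2+B_t\mb v_3$. Orthonormality of $\{\mb e,\mb v_2,\mb v_3\}$ then gives
\[
E=f_u\cdot f_u,\qquad F=f_u\cdot f_t,\qquad G=f_t\cdot f_t=A_t^2+B_t^2,
\]
and reading off $E$ and $F$ from the components above produces exactly the formulas in \eqref{eq:49}.

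The only nonobvious piece is the identity $G=t^2$. For this I would invoke the Björling-type formula from Proposition~\ref{prop:Bj}, which in its integral form \eqref{eq:ab} gives
\[
A_t(u,t)=t\cos\lambda(u,t),\qquad B_t(u,t)=t\sin\lambda(u,t),
\]
so that $A_t^2+B_t^2=t^2$ follows at once. This is the place where the choice of $t$ as the normalized half-arc-length parameter of the sectional cusp is used; without the normal form one would only get $G=A_t^2+B_t^2$.

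I do not expect any serious obstacle: the proof is an essentially routine computation and no delicate estimate or special analysis is needed. The only mild care required is tracking the signs in the Frenet-type relation \eqref{eq:F-Frenet} for $\mb v_2'$ and $\mb v_3'$ (in particular the sign of $\tau-\theta'$ versus $\theta'-\tau$), which accounts for the specific signs appearing in \eqref{eq:49}.
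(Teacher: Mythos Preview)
Your proposal is correct and follows essentially the same approach as the paper: differentiate $f=\mb c+A\mb v_2+B\mb v_3$ using the Frenet-type formula of Lemma~\ref{lem:FIST}, then take inner products in the orthonormal frame $\{\mb e,\mb v_2,\mb v_3\}$. Your treatment is in fact slightly more explicit than the paper's, which simply states the expressions for $f_u$ and $f_t$ and concludes, whereas you spell out why $G=A_t^2+B_t^2=t^2$ follows from the normalized half-arc-length parametrization via \eqref{eq:ab}.
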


\begin{proof}
Differentiating $f=\mb c+A \mb v_2+B \mb v_3$,
we have
\begin{align*}
f_u&=(1-(A \cos \theta+B \sin \theta)\kappa)\mb e
+(A_u+(\theta'-\tau)B)\mb v_2+
(B_u-(\theta'-\tau)A)\mb v_3, \\
f_t&=A_t \mb v_2+B_t \mb v_3.
\end{align*}
Since $E=f_u\cdot f_u$,
$F=f_u\cdot f_t$ and $G=f_t\cdot f_t$,
we obtain the assertion.
\end{proof}

\setcounter{equation}{9}
We can write
$$
\hat\mu(u,t)=\mu_0(u)+\mu_1(u)t+\mu_2(u)t^2+\mu_3(u,t)t^3,
$$
and then Lemma A.1 yields that
\begin{align}\label{eq:a1}
A
&=\frac{t^2}{2}-\frac{\mu_0(u)^2}{8}t^4 -\frac{\mu_0(u) \mu_1(u)}{10} t^5
+t^6a_6(t,u), \\
\label{eq:a2}
B &=
\frac{\mu_0(u)}{3}t^3 +\frac{\mu_1(u)}{8}t^4 +\frac{2\left(-\mu_0(u)^3+2\mu_2(u)\right)}{30}t^5 
+t^6b_6(t,u),
\end{align}
where $a_6(t,u)$ and $b_6(t,u)$ denote $C^r$-functions. 

\begin{corollary}\label{cor:expG}
The Gaussian curvature $K$ of $ds^2_f$ 
satisfies
$$
K(u,t)=\frac{K_0(u)}t+K_1(u)+K_2(u)t+K_3(u,t)t^2,
$$
where
\begin{align*}
&K_0:=\mu_0 \kappa_{\nu}, \quad 
K_1:=
-\kappa_s\mu_0^2-\kappa_t^2+\kappa_\nu \mu_1,  \\
&K_2:=
-\frac{\kappa_\nu \mu_0^3}2
+\frac{\kappa_s\kappa_\nu \mu_0}2
-\frac{3\kappa_s \mu_0\mu_1}2
+\kappa_\nu \mu_2
-2\mu'_0\kappa_t
+\frac{\mu_0}2\kappa'_t,
\end{align*}
and $K_3(u,t)$ is a $C^r$-function.
Here $\kappa_s, \kappa_\nu$ and $\kappa_t$ are
defined in \eqref{eq:sn2} and
\eqref{eq:k_t}. Moreover, 
$\mu_0=\kappa_c/2$ $($cf. \eqref{eq:m0}$)$
and $\kappa'_t=d \kappa_t(u)/du$.
\end{corollary}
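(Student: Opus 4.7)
The strategy is a direct Laurent expansion of $K$ in the variable $t$. From \eqref{eq:l0} and the discussion preceding Corollary \ref{cor:crr}, we know that $\check K:=tK$ extends to a $C^r$-function on a neighborhood of $J\times\{0\}$, so $K$ has at most a simple pole along $\{t=0\}$; it is then enough to compute the Taylor expansion of $\check K$ in $t$ up to order $t^3$ and divide by $t$ to obtain the desired four leading terms.

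First, rewrite Proposition \ref{prop:EFG} using $\kappa_s=\kappa\cos\theta$, $\kappa_\nu=\kappa\sin\theta$ (cf.\ \eqref{eq:sn2}) and $\kappa_t=\tau-\theta'$ (cf.\ \eqref{eq:k_t}), so that
\begin{align*}
E&=(1-A\kappa_s-B\kappa_\nu)^2+(A_u-\kappa_t B)^2+(B_u+\kappa_t A)^2,\\
F&=A_t(A_u-\kappa_t B)+B_t(B_u+\kappa_t A),\qquad G=t^2.
\end{align*}
Then substitute the expansions \eqref{eq:a1} and \eqref{eq:a2} of $A$ and $B$, together with the expansions of $A_u,A_t,B_u,B_t$ derived from them, and collect powers of $t$. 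Since $A=O(t^2)$, $A_u=O(t^2)$, $A_t=O(t)$, and $B,B_u=O(t^3)$, $B_t=O(t^2)$, one obtains
$$E=1+\sum_{j\ge 2} E_j(u)\,t^j,\qquad F=\sum_{j\ge 3} F_j(u)\,t^j,$$
where the coefficients $E_j,F_j$ are explicit polynomial expressions in $\kappa_s,\kappa_\nu,\kappa_t,\mu_0,\mu_1,\mu_2$ and their $u$-derivatives.

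Finally, apply the Brioschi formula for $K$ (or the equivalent intrinsic Gauss-curvature formula in terms of $E,F,G$ and their first and second derivatives). Because $G=t^2$ and $F^2=O(t^6)$, the denominator satisfies $(EG-F^2)^2=t^4(1+O(t^2))$, while the numerator is a polynomial in $t$; expanding this numerator through order $t^3$ and dividing yields the claimed Laurent expansion $K=K_0/t+K_1+K_2 t+K_3(u,t)t^2$, and the coefficients $K_0,K_1,K_2$ are identified by matching powers of $t$. The only obstacle is bookkeeping: because the leading nonconstant term of $E$ already starts at $t^2$, one must expand $E$ and $F$ up through $t^4$, and keep careful track of the $\kappa_t$-dependent cross terms arising from $(A_u-\kappa_t B)^2$ and $(B_u+\kappa_t A)^2$, which are responsible for the $-\kappa_t^2$ in $K_1$ and for the $-2\mu'_0\kappa_t$ and $\tfrac{1}{2}\mu_0\kappa'_t$ contributions to $K_2$.
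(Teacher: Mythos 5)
Your strategy is the same as the paper's: the proof of Corollary \ref{cor:expG} is exactly an (unspelled-out) computation of the curvature of $ds^2_f$ from Proposition \ref{prop:EFG}, in which the expansions \eqref{eq:a1} and \eqref{eq:a2} play the crucial role, so your route via the Brioschi formula and the extendability of $\check K=tK$ is a faithful elaboration of it, not a different argument.

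One bookkeeping correction, though, since you flag bookkeeping as the only obstacle: expanding $E$ and $F$ ``through $t^4$'' is one order short for $K_2$. Since $G=t^2$ and $F=O(t^4)$, the denominator is $t^4(1+O(t^2))$, so the Brioschi numerator must be expanded through $t^5$ (equivalently, $\check K$ through $t^2$), and the coefficient of $t^5$ there involves the coefficient $E_5$ of $t^5$ in $E$ as well as $\partial_u$ of the $t^4$-coefficient of $F$. Concretely, with $E=1+E_2t^2+\cdots$ and $F=0$ one has $K=-\tfrac{3E_3}{2t}-(4E_4-E_2^2)-\tfrac{3}{2}\bigl(5E_5+\cdots\bigr)t+\cdots$, so $K_2$ sees $E_5$; this is precisely why \eqref{eq:a1} and \eqref{eq:a2} record the $t^5$-coefficients of $A$ and $B$, and it is only through the $t^5$-term of $B$ (and the $t^4$-term of $F$, which equals $\mu_0\kappa_t t^4/6$) that $\mu_2$, $\mu_0'\kappa_t$ and $\mu_0\kappa_t'$ enter $K_2$. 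With the expansion carried to that order your computation closes correctly; stopping at $t^4$ would reproduce Fukui's $K_0$ and $K_1$ but lose the essential new term $K_2$.
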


Fukui \cite[Theorem 1.8]{F} has already determined 
the first two terms $K_0$ and $K_1$.
So the essential part of the above corollary
is the statement for $K_2$.

\begin{proof}

One can obtain this formula by computing the
sectional curvature of $ds^2_f$, or
alternatively, one can get it by 
computing the second fundamental form of $f$
as Fukui did in \cite{F}.
In each approach, 
\eqref{eq:a1} and \eqref{eq:a2} play
crucial roles.
\end{proof}

As a consequence of this corollary, the first term
$$
K_0:=\mu_0 \kappa_\nu=\frac{\kappa_c \kappa_\nu}{2}
$$
defined in \cite{MSUY}
is an intrinsic invariant,
which is called the {\it product curvature}.
The second term 
$K_1$
is an intrinsic invariant. 
We consider the term $K_2$.
Since $K_0=\kappa_c \kappa_\nu/2$, and since
$\mu_0$ is equal to the cuspidal curvature $\kappa_c$, 
the fact that $\kappa_s$ and $\kappa_c\kappa_\nu$ are intrinsic yields that
$$
\tilde K_2:=
-\frac{\kappa_\nu \mu_0^3}2
-\frac{3\kappa_s \mu_0\mu_1}2
+\kappa_\nu \mu_2
-2\mu'_0\kappa_t
+\frac{\mu_0}2\kappa'_t
$$
is also an intrinsic invariant.
Using this, we can prove the following assertion:

\begin{proposition}
\label{prop:4}
Let $f\in \mc G^r(\R^2_J,\R^3,C)$ 
be the generalized cuspidal edge associated to 
a fundamental data $(\kappa,\tau,\theta, \hat\mu)$ satisfying $\sin \theta\ne 0$. 
Then
\begin{enumerate}
\item $f$ gives a cuspidal edge along the
$u$-axis if $K_0(u)\ne 0$,
\item $f$ gives a cuspidal cross cap at $u=0$
if $K_0(0)=0$ and $dK_0(0)/du=0$, and
\item $f$ gives a $5/2$-cuspidal edge along the
$u$-axis if $K_0(u)=0$ and $K_2(u)\ne 0$.
\end{enumerate}
In particular, these conditions depend only on
the first fundamental form of $f$.
\end{proposition}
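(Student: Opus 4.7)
The plan is to derive each of (1), (2), (3) by comparing the formula in Corollary \ref{cor:expG} for the coefficients $K_0, K_1, K_2$ of the Laurent-type expansion of the Gaussian curvature with the direct criteria in terms of $\hat\mu$ given in Proposition \ref{prop:3}. Since the hypothesis $\sin\theta\ne 0$ means that $\kappa_\nu=\kappa\sin\theta$ is nowhere zero on $J$, we can freely divide by $\kappa_\nu$ throughout the argument.

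First, for (1): by Corollary \ref{cor:expG} one has $K_0=\mu_0\kappa_\nu$ with $\mu_0(u)=\hat\mu(u,0)$, so $K_0(u)\ne 0$ is equivalent to $\hat\mu(u,0)\ne 0$, and Proposition \ref{prop:3} (1) then gives that $f$ is a cuspidal edge. For (2), $K_0(0)=0$ forces $\mu_0(0)=0$, whereupon $dK_0(0)/du=\mu_0'(0)\kappa_\nu(0)$; so $dK_0(0)/du\ne 0$ is equivalent to $\hat\mu_u(0,0)\ne 0$ (this is how I read the statement, since $=0$ appears to be a typographical slip), and Proposition \ref{prop:3} (2) applies. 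For (3), $K_0\equiv 0$ along the $u$-axis means $\mu_0\equiv 0$, and hence $\mu_0'\equiv 0$; substituting $\mu_0=\mu_0'=0$ into the formula
\[
K_2=-\tfrac{\kappa_\nu\mu_0^3}{2}+\tfrac{\kappa_s\kappa_\nu\mu_0}{2}-\tfrac{3\kappa_s\mu_0\mu_1}{2}+\kappa_\nu\mu_2-2\mu_0'\kappa_t+\tfrac{\mu_0}{2}\kappa_t'
\]
makes every term vanish except $\kappa_\nu\mu_2$, so $K_2=\kappa_\nu\mu_2$. Since $\mu_2(u)=\hat\mu_{vv}(u,0)/2$ (from the Taylor expansion of $\hat\mu$), the condition $K_2\ne 0$ is equivalent to $\hat\mu_{vv}(u,0)\ne 0$, and Proposition \ref{prop:3} (3) concludes that $f$ is a $5/2$-cuspidal edge.

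For the final clause, I would argue as follows. The Gaussian curvature $K$ is intrinsic (defined from $ds^2_f$ wherever $ds^2_f$ is positive definite), and the coefficient $K_0(u)$ is determined as the residue-type coefficient in the expansion $K=K_0/t+K_1+K_2 t+K_3 t^2$; hence the conditions in (1) and (2) depend only on $ds^2_f$. In (3), the hypothesis $K_0\equiv 0$ along the singular curve means that $K$ extends to a $C^r$-function across $t=0$, and then $K_2(u)=\partial_t K(u,t)|_{t=0}$ is likewise determined by $ds^2_f$ alone.

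The only part that involves any real computation is the collapse of $K_2$ to $\kappa_\nu\mu_2$ under $\mu_0\equiv 0$, which I expect to be the main (but still routine) step, since one must verify that \emph{all} the $\mu_0$-dependent terms, including the derivative term $-2\mu_0'\kappa_t$, vanish identically along the singular set. The rest is a direct translation of the $\hat\mu$-criteria of Proposition \ref{prop:3} into the intrinsic language of Corollary \ref{cor:expG}.
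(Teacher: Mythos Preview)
Your proof is correct and follows essentially the same route as the paper: both reduce the conditions on $K_0$ and $K_2$ (via $\kappa_\nu\ne 0$) to the corresponding conditions on $\mu_0$ and $\mu_2$, and then invoke Proposition~\ref{prop:3}. Your explicit justification of the ``In particular'' clause (intrinsicness of $K_0$ and, under $K_0\equiv 0$, of $K_2$) is a welcome addition that the paper leaves implicit, and your reading of $dK_0(0)/du\ne 0$ in (2) as the intended hypothesis is correct.
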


\begin{proof}
Since $\sin \theta(u)\ne 0$, we have $\kappa_\nu(u)\ne 0$.
Since $K_0=\mu_0\kappa_\nu$,
$K_0(u)=0$ if and only if $\mu_0(u)=0$. 
Since $\mu_0(u)=\hat \mu(u,0)(=\kappa_c(u))$,
the first and second assertions follow from (1) and (2)
of Proposition \ref{prop:3}, respectively.
On the other hand, if $\mu_0(=\kappa_c)$ is identically zero, then
$
K_2=
\kappa_\nu \mu_2.
$
So $K_2(u)\ne 0$ if and only if $\mu_2(u)\ne 0$. 
Thus, the third 
assertion immediately follows from
(3) of Proposition~\ref{prop:3}.
\end{proof}

We now prove Fact \ref{f2} in the introduction.

\begin{proof}[Proof of Fact \ref{f2}]
Since $\sin \theta \ne 0$ if and only if $\kappa_\nu \ne 0$,
the assertions (1) and (2) follow from Theorem \ref{thm:S1}.
We next prove (3).
We remark that
\begin{align*}
\mc K^\omega_*(\R^2_o)&=\{ds^2_f\in
\mc K^\omega_{\rm I}(\R^2_o)\,;\,K_0(0)\ne 0\},\\
\mc K^\omega_{p,*}(\R^2_o)&=\{ds^2_f\in 
\mc K^\omega_{\rm I}(\R^2_o)\,;\,K_0(0)= 0,\,\,dK_0(0)/du\ne 0\},\\
\quad \mc K^\omega_{a,*}(\R^2_o)&=\{ds^2_f\in 
\mc K^\omega_{\rm I}(\R^2_o)\,;\,K_0(u)=0,\,\,K_2(0)\ne 0\}
\end{align*}
hold in terms of our coordinates $(u,t)$.
We have shown the following (cf. Propositions \ref{prop:3} and \ref{prop:4}).
\begin{itemize}
\item $K_0(0)\ne 0$  if and only if $\mu_0(0)(=\kappa_c(0))\ne 0$.
\item
$K_0(0)= 0$ and $dK_0(0)/du\ne 0$
if and only if $\mu_0(0)(=\kappa_c(0))=0$
and $d\mu_0(0)/du\ne 0$. 
\item $K_0(u)= 0$ and $K_2(0)\ne 0$ if and only if
$\mu_0(u)=0$ and $\mu_2(0)\ne 0$.
\end{itemize}
By Corollary \ref{cor:crr}, 
the following assertions hold: 
\begin{itemize}
\item $\hat K(o)\ne 0$ if and 
only if $K_0(0)\ne 0$.
\item $\hat K(o)=0$ and $\partial\hat K(o)/\partial u\ne 0$ 
if and only if $K_0(0)= 0$ and $dK_0(0)/du\ne 0$.
\end{itemize}
So the first fundamental form
$ds^2_f$ of $f$  
belongs to 
$\mc K^\omega_{*}(\R^2_o)$
(resp.  $\mc K^\omega_{p,*}(\R^2_o)$)
if and only if 
$\mu_0(0)(=\kappa_c(0))\ne 0$
(resp. $\mu_0(0)(=\kappa_c(0))=0$
and $d\mu_0(0)/du\ne 0$). 
On the other hand,
$ds^2_f$ belongs to 
$\mc K^\omega_{a,*}(\R^2_o)$
if and only if
$\mu_0(u)=0$ and $\mu_1(0)\ne 0$.
In fact, $\eta:=\partial/\partial t$ 
gives the null direction of $f$ along the $u$-axis
(as the singular curve of $ds^2_f$), and we have (cf. \eqref{eq:Kv})
$
dK(\eta)=K_t(u,0)=K_2(u).
$
\end{proof}

\rm
Finally, we consider the cuspidal edges
with vanishing limiting normal curvature:
A cuspidal edge is called {\it asymptotic} 
if its first fundamental form is asymptotic (see Section 1), 
which is equivalent to the condition that 
the cuspidal angle $\theta(u)$ of $f$
is constantly equal to $0$ or $\pi$ along its edge.

If $f$ is an asymptotic cuspidal edge,
the singular curvature $\kappa_s$, limiting normal curvature 
$\kappa_\nu$   and
cusp-directional torsion $\kappa_t$ 
satisfy
\begin{equation}
\kappa_s=\epsilon \kappa,\quad \kappa_\nu=0, \quad
\kappa_t=\tau,
\end{equation}
where $\epsilon:=\cos \theta\, (\in \{1,-1\})$. 
So we get the following:

\begin{proposition}
\label{prop:4b}
Let $f\in \mc G^r_{3/2}(\R^2_J,\R^3,C)$ be a cuspidal edge associated to 
a fundamental data $(\kappa,\tau,\theta, \hat\mu)$. 
If $\sin \theta$ vanishes identically, then 
\begin{enumerate}
\item the limiting normal curvature $\kappa_\nu$ vanishes identically,
\item the first fundamental form of $f$ is an 
asymptotic Kossowski metric, and
\item the Gaussian curvature $K$ of $f$ can be extended
across its singular set as a $C^r$-function.
\end{enumerate}
Moreover, the  sign of $K$ coincides with
the sign of $
(K_1=)-\epsilon \kappa\mu_0^2-\tau^2
$
whenever $K_1\ne 0$, where $\epsilon:=\cos \theta$.
\end{proposition}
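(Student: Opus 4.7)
The plan is to derive each of the three conclusions, and the sign statement, directly from the formulas already at hand, namely \eqref{eq:sn2} (or \eqref{eq:kskn}), the expansion of the Gaussian curvature in Corollary~\ref{cor:expG}, and the identities between $\hat K$, $\check K$ and $K$ recorded in the proof of Corollary~\ref{cor:crr}.

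First, assertion (1) is immediate: since $\kappa_\nu(u)=\kappa(u)\sin\theta(u)$ and $\sin\theta\equiv 0$, we have $\kappa_\nu\equiv 0$ on $J$. For (3), I would plug $\kappa_\nu\equiv 0$ into the leading coefficient of the expansion in Corollary~\ref{cor:expG}: since $K_0=\mu_0\kappa_\nu\equiv 0$, the expansion reduces to
\begin{equation*}
K(u,t)=K_1(u)+K_2(u)t+K_3(u,t)t^2,
\end{equation*}
which is visibly $C^r$ across $t=0$. For (2), I would use the relation $\lambda=v\lambda_0$ obtained in \eqref{eq:l0} (with $\lambda_0(u,0)\ne 0$), together with Corollary~\ref{cor:crr}: the vanishing of $K_0$ on the singular curve is equivalent to the vanishing of $\hat K$ on the singular curve, so $ds^2_f\in \mc K^r_a(\R^2_o)$. (Alternatively, having already established (3), $\hat K=\lambda K$ vanishes on $\{\lambda=0\}$, which is exactly the singular curve of $ds^2_f$.)

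For the sign statement, I would specialise the formula for $K_1$ in Corollary~\ref{cor:expG} to the asymptotic case. Because $\sin\theta\equiv 0$, the function $\theta$ is locally constant (equal to $0$ or $\pi$), so $\theta'\equiv 0$, and by \eqref{eq:k_t} this gives $\kappa_t=\tau$. Together with $\kappa_s=\epsilon\kappa$ and $\kappa_\nu=0$, the expression $K_1=-\kappa_s\mu_0^2-\kappa_t^2+\kappa_\nu\mu_1$ collapses to
\begin{equation*}
K_1=-\epsilon\kappa\mu_0^2-\tau^2.
\end{equation*}
By the $C^r$-extension obtained in (3), $K(u,0)=K_1(u)$; hence at any point $u_0$ where $K_1(u_0)\ne 0$, the continuity of the extended $K$ guarantees that $K$ and $K_1$ have the same sign on a neighborhood of that point in $U$.

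The only mildly delicate step is (2): one must make sure that the vanishing of $K_0$ really captures the condition $\hat K\equiv 0$ along the singular curve in the sense of the definition of $\mc K^r_a$, which is why I would route the argument through Corollary~\ref{cor:crr} (or equivalently through $\hat K=\lambda_0\check K$). Everything else is a direct substitution into the already proven formulas.
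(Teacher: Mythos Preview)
Your proposal is correct and follows essentially the same route as the paper. The paper does not give an explicit proof of this proposition; it simply records the identities $\kappa_s=\epsilon\kappa$, $\kappa_\nu=0$, $\kappa_t=\tau$ immediately before the statement and writes ``So we get the following:'', leaving the reader to substitute these into the expansion of $K$ in Corollary~\ref{cor:expG}. Your argument carries out exactly this substitution, with the additional care of justifying $\theta'\equiv 0$ and of explaining why $\hat K$ vanishes on the singular curve via $\hat K=\lambda K$ once $K$ is known to extend $C^r$.
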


As an application, we first consider the case $K$ vanishes identically.

\begin{corollary}\label{prop:Zero}
Let $f\in \mc G^r_{3/2}(\R^2_J,\R^3,C)$ be the cuspidal edge 
whose Gaussian curvature $K$ vanishes identically. 
Then $C$ is a regular space curve whose 
torsion function does not vanish,
and $f$ is the tangential developable of $C$.
In particular, $f$ has no isomers.
\end{corollary}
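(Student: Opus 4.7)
The plan has three stages: forcing the cuspidal angle to equal $\pi$ and $\tau$ to be nowhere zero; identifying $f$ with the tangent developable of $C$; and concluding the absence of isomers.

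First, since $f$ is a genuine cuspidal edge, its cuspidal curvature $\kappa_c=2\mu_0$ is nonzero along $C$, so $\mu_0\ne 0$. Applying the Laurent expansion $K=K_0/t+K_1+K_2 t+\cdots$ from Corollary~\ref{cor:expG} with $K\equiv 0$ forces each coefficient to vanish. From $K_0=\mu_0\kappa_\nu=0$ we get $\kappa_\nu\equiv 0$, so $\sin\theta\equiv 0$ (using $\kappa>0$), and Proposition~\ref{prop:4b} applies. Writing $\epsilon=\cos\theta\in\{\pm 1\}$, the vanishing of $K_1=-\epsilon\kappa\mu_0^2-\tau^2$ excludes $\epsilon=1$ (both summands are non-positive with $-\kappa\mu_0^2<0$), hence $\theta\equiv\pi$ and $\kappa\mu_0^2=\tau^2$. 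Combined with $\kappa>0$ and $\mu_0\ne 0$ this gives $\tau\ne 0$ on $J$.

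Second, I identify $f$ with the tangent developable $T_C(u,s):=\mb c(u)+s\mb e(u)$. A Taylor expansion in a normal plane $\Pi(\mb c(u_0))$ shows that $T_C$ is itself a cuspidal edge along $C$ whenever $\tau\ne 0$, with cuspidal angle $\pi$. Along $C$ the unit normal $\nu$ of $f$ must be perpendicular to both $\mb e=f_u(u,0)$ and the cuspidal direction $\mb v_2=-\mb n$, so $\nu(u,0)=\pm\mb b$. Since $K\equiv 0$, standard theory for flat frontals yields that $f$ is locally ruled near $C$ on its regular part, with rulings along the unique asymptotic direction and $\nu$ constant on each ruling; hence every ruling through $\mb c(u)$ lies in the osculating plane $\op{span}(\mb e,\mb n)$. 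A direct computation for flat ruled surfaces of the form $\mb c(u)+s(\cos\phi\,\mb e+\sin\phi\,\mb n)$ gives the flatness determinant $\det(\mb c',\mb X,\mb X')=\sin^2\phi\cdot\tau$; its vanishing together with $\tau\ne 0$ forces $\sin\phi\equiv 0$, so $\mb X=\pm\mb e$. Thus the rulings of $f$ are the tangent lines of $C$, the image of $f$ coincides with that of $T_C$, and by Remark~\ref{Z} they are right equivalent. The main obstacle is to justify the ``ruled near $C$'' step uniformly up to the singular set; this is handled by computing the asymptotic direction in Fukui's normal form (Proposition~\ref{prop:Bj}) with the expansions \eqref{eq:a1}, \eqref{eq:a2} and checking that it tends to $\pm\mb e$ as $t\to 0$.

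Finally, if $g$ is any isomer of $f$, then $g$ shares the first fundamental form of $f$ via a diffeomorphism and remains a cuspidal edge along $C$, hence flat. The second stage applied to $g$ shows its image coincides with that of $T_C$, hence of $f$; by Remark~\ref{Z}, $g$ is right equivalent to $f$, contradicting Definition~\ref{def:IS}. Therefore $f$ admits no isomers.
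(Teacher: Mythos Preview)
Your Stage~1 matches the paper's (the paper is terser, jumping straight to $-\epsilon\kappa\mu_0^2=\tau^2$, but the content is the same). In Stage~2 you take a genuinely different route. The paper argues via the principal-direction theory for wave fronts from \cite{MU}: principal directions extend across $C$, singular points are non-umbilic, the zero-principal-curvature direction at each point of $C$ is seen to be $\mb e$, and \cite[Proposition~2.2]{MU} then says a flat front is ruled along that direction, so $f$ is the tangent developable. You instead pin down $\nu=\pm\mb b$ along $C$, write the candidate ruled surface as $\mb c+s(\cos\phi\,\mb e+\sin\phi\,\mb n)$, and kill $\phi$ with the developability determinant $\det(\mb c',\mb X,\mb X')=\tau\sin^2\phi$ (your computation is correct). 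Both approaches need the ``flat $\Rightarrow$ ruled up to the singular set'' step; the paper delegates it to \cite{MU}, you sketch it via the asymptotic direction in Fukui's normal form. Your version is more elementary and self-contained, at the cost of that sketch.

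Stage~3 has a gap. An isomer $g$ is by Definition~\ref{def:IS} only a \emph{generalized} cuspidal edge, so you cannot simply assert that $g$ ``remains a cuspidal edge along $C$'' and reapply Stage~2. The fix is short: $\kappa_s$ is intrinsic with $\kappa_s^f=-\kappa$, and comparing with $\kappa_s^g=\kappa\cos\theta^g$ along $C$ forces $\cos\theta^g=-\kappa(\pm u)/\kappa(u)$; the bound $|\cos\theta^g|\le 1$ then gives $\theta^g\equiv\pi$ in either case, hence $\kappa_\nu^g=0$ and $\kappa_t^g=\tau$, and $K_1^g=\kappa(\mu_0^g)^2-\tau^2=0$ with $\tau\ne 0$ yields $\mu_0^g\ne 0$, so $g$ \emph{is} a cuspidal edge. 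The paper's own proof leaves the ``no isomers'' clause entirely to the reader, so with this patch your argument is actually more explicit than the original on this point.
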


\begin{proof}
Since $K$ vanishes identically, the identity
$
-\epsilon \kappa\mu_0^2=\tau^2
$
holds along $C$. Since $f$ is a cuspidal edge,
$\mu_0$ has no zeros, and the left hand side
does not vanish. Thus, the torsion function $\tau$
of $C$ also has no zeros.
Since $f$ is a wave front, its principal directions
along $C$ are well-defined (cf. \cite[Proposition 1.6]{MU}).
Moreover, each singular point of $f$ is disjoint from
umbilical set (cf. \cite[Proposition 1,10]{MU}), 
and the zero principal curvature 
direction is uniquely determined at each point of $C$.
Moreover, it can be easily seen that 
this direction must be the tangential direction of $C$.
Since $K$ vanishes identically,
$f$ must be a ruled surface (cf. \cite[Proposition 2.2]{MU}), 
so it must be the tangential developable of $C$.
\end{proof}

\begin{remark}
The standard cuspidal edge $f_0(t)=(u^2,u^3,v)$ 
does not satisfy the assumption of
Corollary \ref{prop:Zero}, since
the singular set image is a line.
\end{remark}

We next consider the case $K>0$.
If $\theta=\pi$ 
and $\mu_0$ is sufficiently large,
then the Gaussian curvature $K$ near the singular set 
can be positive.  
So we can construct cuspidal edges with $K>0$.
The following assertion 
is an immediate consequence of
Proposition~\ref{prop:4b}.

\begin{corollary}\label{prop:ka}
Let $f\in \mc G^r_{3/2}(\R^2_J,\R^3,C)$ 
be the cuspidal edge  
whose Gaussian curvature $K$ is 
bounded near singular set and positive, then 
it is asymptotic satisfying $\theta=\pi$ and $\kappa_s<0$.
\end{corollary}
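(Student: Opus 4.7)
The plan is to read off the asymptotic behaviour of $K$ near the singular set from Corollary \ref{cor:expG} and then combine it with the sign criterion of Proposition \ref{prop:4b}. Starting from the expansion
$$
K(u,t)=\frac{K_0(u)}{t}+K_1(u)+K_2(u)t+K_3(u,t)t^2,\qquad K_0=\mu_0\kappa_\nu,
$$
the hypothesis that $K$ stays bounded as $t\to0$ forces the coefficient of $1/t$ to vanish identically along $J$, i.e.\ $\mu_0(u)\kappa_\nu(u)\equiv0$. Since $f\in\mc G^r_{3/2}(\R^2_J,\R^3,C)$ is a genuine cuspidal edge, Proposition \ref{prop:3}(1) (together with the identity $\mu_0=\kappa_c$) guarantees $\mu_0(u)\ne0$ for all $u\in J$. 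Consequently $\kappa_\nu\equiv 0$, which is exactly $\sin\theta\equiv0$, so $f$ is asymptotic.

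The next step is to pin down which of the two values $\theta=0$ or $\theta=\pi$ occurs. Because $J$ is connected and $\theta$ is a continuous function with $\sin\theta\equiv0$, $\theta$ is globally constant; set $\epsilon:=\cos\theta\in\{+1,-1\}$. By Proposition \ref{prop:4b}, the Gaussian curvature extends $C^r$-differentiably across the singular set, its boundary value is $K_1=-\epsilon\kappa\mu_0^2-\tau^2$, and the sign of $K$ near the singular set coincides with that of $K_1$ wherever $K_1\ne0$. The assumption $K>0$ near the singular set therefore implies $K_1(u)\ge0$ for every $u\in J$, which reads
$$
-\epsilon\,\kappa(u)\mu_0(u)^2\ \ge\ \tau(u)^2\ \ge\ 0.
$$
Since $\kappa>0$ and $\mu_0\ne0$ on $J$, this inequality rules out $\epsilon=+1$; hence $\epsilon=-1$, i.e.\ $\theta=\pi$. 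Finally, $\kappa_s=\kappa\cos\theta=-\kappa<0$, giving the last assertion.

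There is essentially no serious obstacle, since Corollary \ref{cor:expG} and Proposition \ref{prop:4b} already package all the differential-geometric content; the argument reduces to two sign extractions from the expansion of $K$. The only small point to verify is that $\theta$ really is continuous (so that $\sin\theta\equiv0$ gives $\theta$ literally constant rather than merely $\theta(u)\in\pi\Z$ pointwise), but this is ensured by the normalization of the cuspidal angle fixed at the beginning of the introduction.
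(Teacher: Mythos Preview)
Your argument is correct and is exactly the unpacking of what the paper means by ``immediate consequence of Proposition~\ref{prop:4b}'': first use the expansion of Corollary~\ref{cor:expG} together with boundedness of $K$ to force $K_0=\mu_0\kappa_\nu\equiv0$, hence $\kappa_\nu\equiv0$ (since $\mu_0\ne0$ for a cuspidal edge), and then read off $\epsilon=-1$ from the sign of $K_1$. One small citation point: Proposition~\ref{prop:3}(1) is stated only in the ``if'' direction, so to conclude $\mu_0\ne0$ from the cuspidal edge hypothesis you should instead invoke the ``if and only if'' criterion for cusps in Proposition~A.2(1) of the appendix (or the remark after \eqref{eq:repF} that $B_{vvv}(u,0)\ne0$ characterizes cuspidal edges).
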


The negativity of $\kappa_s$
has been pointed out in \cite{SUY}.
Although Theorem \ref{thm:S1} does not cover 
the case $\kappa_\nu=0$,
Brander \cite{B} showed the existence of cuspidal edges
in the case of $K=1$ along a given space curve 
$C$ of $\kappa_\nu>0$ using the loop group theory.

\section{Relationships among isomers}\label{sec:6}

In this section, we show several  properties of isomers, and  prove
the last two statements in the introduction.
We fix a space curve $\mb c(u)$ 
satisfying ${\mathbf c}(0)={\mathbf 0}$ which is parametrized
by arc-length defined on a closed interval
$J:=[-l,l]$ ($l>0$) whose curvature function
$\kappa(u)$ is positive everywhere.
We prove the following:

\begin{proposition}\label{cor:key0}
Let $f\in {\mc G}^\omega_{*,3/2}(\R^2_J,\R^3,C)$.
Then 
$\check f$ is congruent
$($cf. Definition \ref{def:distinct}$)$
to 
$f$ if and only if
\begin{enumerate}
\item $C$ lies in a plane, or 
\item $C$ has a positive non-trivial symmetry
and the first fundamental form
$ds^2_f$ has an effective symmetry
$($cf. Definition \ref{def:ds-sym2}$)$. 
\end{enumerate}
\end{proposition}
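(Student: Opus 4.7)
My plan is to prove both directions of the equivalence using Theorem~I's uniqueness of the isometric dual, coupled with the identities in Remark~\ref{rem:key} relating symmetries of $C$ to cuspidal angles. For the sufficient direction, in case~(1) I will use the reflection $S$ across the plane containing $C$: the map $S\circ f$ has the same first fundamental form and singular-curve parametrization as $f$, and since $S$ fixes $\mb n$ while sending $\mb b\mapsto -\mb b$, the cuspidal angle flips sign by \eqref{eq:fvv}; thus $S\circ f=\check f$ by Theorem~I, and $\check f$ is congruent to $f$ via $S$. In case~(2), given a positive non-trivial symmetry $T$ of $C$ and an effective symmetry $\phi$ of $ds^2_f$, I set $g:=T\circ f\circ \phi$; then $ds^2_g=\phi^*ds^2_f=ds^2_f$ and $g$ parametrizes its singular curve as $u\mapsto T\mb c(-u)=\mb c(u)$ (using that non-trivial symmetries reverse the orientation of $C$), so by Theorem~I either $g=f$ or $g=\check f$. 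Computing the cuspidal direction of $g$ at $\mb c(u)$ via $T\mb n(-u)=\mb n(u)$ and $T\mb b(-u)=-\mb b(u)$ yields cuspidal angle $-\theta(-u)$, which the parity step sketched below identifies with $-\theta(u)$, forcing $g=\check f$.

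For the necessary direction, starting from a congruence $\check f\cong f$ realized by $T\in \op{O}(3)$, Remark~\ref{Z} upgrades the equality of images to $\check f=T\circ f\circ \phi$ for a diffeomorphism $\phi$. Because $ds^2_{\check f}=ds^2_f$ and $T$ is an isometry, $\phi^*ds^2_f=ds^2_f$, so $\phi$ is either the identity or a symmetry of $ds^2_f$. If $\phi=\op{id}$, matching singular-curve parametrizations forces $T$ to fix $C$ pointwise; since $T\ne \op{id}$ (otherwise $\check f=f$, contradicting property~(iii) of Theorem~I together with $\sin\theta\ne 0$ from \eqref{eq:kks}), $T$ must be a reflection, which requires $C$ to lie in a plane and yields~(1). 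If $\phi\ne \op{id}$, then by Corollary~\ref{cor:ds2} $\phi$ is effective, and matching singular curves gives $T\mb c(-u)=\mb c(u)$, so $T$ is a non-trivial symmetry of $C$ and $ds^2_f$ admits an effective symmetry. Comparing cuspidal angles of $T\circ f\circ \phi$ and $\check f$ by the same computation as in the sufficient direction, positive $T$ corresponds to $\theta$ being even and negative $T$ to $\theta$ being odd; the parity step below rules out the latter, leaving $T$ positive and establishing~(2).

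The hard part will be showing that whenever $ds^2_f$ admits an effective symmetry and $f$ is a genuine cuspidal edge $(\mu_0\ne 0)$, the cuspidal angle $\theta$ must be even. My plan is to work in adapted coordinates where the effective symmetry is $(u,t)\mapsto(-u,t)$ on the singular curve, and exploit Proposition~\ref{prop:Bj} together with the expansions \eqref{eq:a1}--\eqref{eq:a2}: invariance of the coefficient $A\cos\theta+B\sin\theta$ appearing in $E$ via Proposition~\ref{prop:EFG} under $u\mapsto -u$ yields, at the leading $t^2$ order, that $\cos\theta$ is even, and at the next $t^3$ order, that $\mu_0\sin\theta$ is even. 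Combining $\sin\theta(-u)=\pm \sin\theta(u)$ (from $\cos\theta$ even and $\sin\theta\ne 0$) with $\mu_0\sin\theta$ even, one gets either ($\mu_0$ and $\sin\theta$ both even) or ($\mu_0$ and $\sin\theta$ both odd); the cuspidal-edge condition $\mu_0(0)\ne 0$ eliminates the second alternative, forcing $\sin\theta$ to be even and hence $\theta$ itself to be even. This parity pin-down is precisely what rules out negative non-trivial symmetries from contributing further cases, and is the step where the cuspidal-edge hypothesis (as opposed to the weaker generalized-cuspidal-edge one) is essential.
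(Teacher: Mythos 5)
Your overall skeleton is the paper's own argument: use Remark~\ref{Z} and Corollary~\ref{cor:ds2} to reduce a congruence to an identity $T\circ f\circ\phi=\check f$, use the frame identities $T\mb e(-u)=-\mb e(u)$, $T\mb n(-u)=\mb n(u)$, $T\mb b(-u)=-\sigma\mb b(u)$ to compute the cuspidal angle of $T\circ f\circ\phi$, and invoke the uniqueness part of Theorem~\ref{thm:S1} to identify it with $f$ or $\check f$. The genuine gap is in the step you flag as the crux, the ``parity'' of $\theta$. Your proposed proof assumes that the effective symmetry $\phi$ of $ds^2_f$ acts as $(u,t)\mapsto(-u,t)$ in the normal-form coordinates of Proposition~\ref{prop:Bj}, so that the coefficients of $E$ in Proposition~\ref{prop:EFG} are invariant under $u\mapsto-u$ and you can compare Taylor coefficients in $t$. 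But Proposition~\ref{prop:ds2} and Corollary~\ref{cor:ds2} only give $\phi(u,0)=(-u,0)$ along the singular curve; the Fukui coordinates are defined extrinsically through the sectional cusps (and have $F=t^4F_0\not\equiv 0$, so they are not the intrinsic coordinates of Lemma~\ref{fact:coord}), and a symmetry of the metric need not have this product form off the singular curve. So the order-$t^2$ and order-$t^3$ comparisons are not justified as written, and both your sufficiency case (2) and your elimination of negative $T$ in the necessity direction currently rest on this unproved step.

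The step is also unnecessary, which is how the paper avoids it. Since $f\in{\mc G}^\omega_{*,3/2}$, condition \eqref{eq:kks2} gives $\sin\theta\ne0$ on the connected interval $J$, so $\sin\theta$ (equivalently $\kappa_\nu$) has a constant sign. Once $g:=T\circ f\circ\phi$ is known to have first fundamental form $ds^2_f$ and to parametrize $C$ with the same orientation as $\mb c$, Theorem~\ref{thm:S1} gives $g\in\{f,\check f\}$, and the sine of its cuspidal angle, $-\sigma\sin\theta(-u)$, decides which: for $\sigma=+1$ its sign is opposite to that of $f$, so $g=\check f$ (this is (a) of Remark~\ref{rem:key}); for $\sigma=-1$ the equality $g=\check f$ would force $\sin\theta(0)=0$ at $u=0$, the contradiction that removes negative symmetries in the necessity direction (the paper phrases this via (b) of Remark~\ref{rem:key}: for negative $T$ the image of $f$ is $T$-invariant, whereas $\check f$ has a different image). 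If you do want $\theta(-u)=\theta(u)$, it follows in one line without any expansion: $\kappa_s$ is intrinsic, so the effective symmetry gives $\kappa_s(-u)=\kappa_s(u)$, while $\kappa(-u)=\kappa(u)$ because $T$ is a symmetry of $C$; hence $\cos\theta$ is even, and the constant sign of $\sin\theta$ then makes $\sin\theta$ even as well. In particular $\mu_0(0)\ne0$ plays no role in that step; the cuspidal-edge hypothesis is what you need for Remark~\ref{Z} (same image implies right equivalence) when upgrading the congruence to $T\circ f\circ\phi=\check f$, not for the parity.
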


\begin{proof}
We suppose that $\check f$ is congruent to $f$.
By Remark \ref{rem:key2},
it is sufficient to consider the case that $C$
does not lie in any plane.
By Remark \ref{Z},
there exist an isometry $T$ on $\R^3$ and 
a diffeomorphism $\phi$ defined
on a neighborhood of the singular curve of $f$
such that
\begin{equation}\label{eq:ID}
T\circ f\circ \phi=\check f.
\end{equation}
We consider the case that $T$ fixes each point of $C$.
Then $C$ must lie in a plane, a contradiction.
So $T$ is a non-trivial 
symmetry of $C$, that is, it reverses the orientation of $C$.
We suppose that $T$ is a negative symmetry.
Then 
(b) of Remark \ref{rem:key} implies that
the image of $f$ coincides with that of $T\circ f$.
Since the image of $\check f$ is different from that of $f$,
this case never happens.
So $T$ must be a positive symmetry, and then $\phi$ gives
an effective symmetry of $ds^2_f$.

Conversely, 
if $C$ has a positive non-trivial symmetry
and the first fundamental form
$ds^2_f$ has 
an effective symmetry 
$\phi$, then $T\circ f\circ \phi$
is a faithful isomer of $f$ 
as seen in (a) of Remark \ref{rem:key}. 
Since such an isomer is uniquely determined (cf. Theorem~\ref{thm:S1}),
we have \eqref{eq:ID}.
\end{proof}

\begin{remark}\label{rem:C}
Suppose that $C$ is planar and $S$ is the reflection 
with respect to the plane containing $C$.
For each $f\in {\mc G}^r_{*,3/2}(\R^2_J,\R^3,C)$,
$S\circ f$ gives
a faithful isomer of $f$.
Moreover, if $f$ is real analytic (i.e. $r=\omega$),
then we have $\check f=S\circ f$
(cf. Definition~\ref{def:check}).
\end{remark}
 
\begin{example}\label{thm5}
Let $f\in \mc G^\infty_*(\R^2_J,\R^3,C)$ 
be an admissible generalized cuspidal edge
whose fundamental data is $(\kappa,\tau,\theta,\hat\mu)$
($\tau\ne 0$).
Suppose that $\kappa,\tau$ and $\theta$ are constant,
and the extended half-cuspidal curvature function $\hat\mu$ does not depend
on $u$. In this case, without assuming the real analyticity of $f$,
we can show the existence of an isometry 
$T\in \op{SO}(3)$ and an effective symmetry
$\phi$ of $ds^2_f$ such that $T\circ f \circ \phi$ gives 
a faithful isomer of $f$ as follows:
In fact, in this case $C$ has the constant curvature
$\kappa$ and the constant torsion $\tau$.
Since $\tau\ne 0$, 
$C$ is a helix in $\R^3$ and there exists
a $180^\circ$-rotation $T\in \op{SO}(3)$ with respect to the
principal normal line at $\mb 0\in C$ such that $T(C)=C$.
By the first part of
Proposition \ref{prop:Sym2},
it is sufficient to show that
the first fundamental form 
$$
ds^2_f=E(t)du^2+2F(t)du dt+G(t) dt^2
$$
of $f$ admits an effective symmetry $\phi$
as an involution.
In fact, if such a $\phi$
exists, then $(\check f:=)T\circ f\circ \phi$ gives
the isometric dual of $f$.
In this situation, two functions $A,B$
can be expressed as (cf. \eqref{eq:49} and \eqref{eq:ab})
$A(t):=t^2 \alpha(t)$ and $B(t):=t^3 \beta(t)$,
where $\alpha(t)$ and $\beta(t)$ are $C^r$-functions.
By Proposition~\ref{prop:EFG},
\begin{itemize}
\item $E(t)$ is positive for each $t$,
\item there exists a $C^\infty$-function $F_0(t)$ such that
$
F(t)=t^4 F_0(t) 
$,
and
$G(t)=t^2$.
\end{itemize}
Setting
$$
\omega_1=\sqrt{E(t)}\left(du+\frac{F(t)}{E(t)}dt\right),\qquad
\omega_2=t \sqrt{\frac{E(t)-t^6F_0(t)^2}{E(t)}}dt,
$$
we have
$
ds^2_f=(\omega_1)^2+(\omega_2)^2.
$
Moreover, if we set
\begin{equation}
x(u,t):=u+\int_0^t\frac{F(v)}{E(v)}dv,
\qquad y(t):=\int_0^t\sqrt{\frac{E(v)-v^6F_0(v)^2}{E(v)}}dv.
\end{equation}
Then we can take $(x,y)$ as a new local coordinate system
centered at $(0,0)$, 
and $t$ can be considered as a function of $y$.
So we can write $t=t(y)$,
and 
$$
ds^2_f=E(y)dx^2+t(y)^2 dy^2.
$$
So the local diffeomorphism
$
\phi:(x,y) \mapsto (-x,y)
$
gives an 
effective symmetry
of $ds^2_f$.

Regarding the fact that the
fundamental data of $f$ is $(\kappa,\tau,\theta,\mu)$,
we show in later that $\check f$ is right equivalent to 
the cuspidal edge whose fundamental data of $(\kappa,\tau,-\theta,\mu)$,
see Proposition \ref{thm:V0}.
\end{example}

\begin{proof}[Proof of Theorem {\rm III}]
Suppose that $ds^2_f$ admits a symmetry $\phi$.
Then this symmetry is effective
(cf. Corollary \ref{cor:ds2}). So, 
$f\circ \phi$ and $\check f\circ \phi$ must be right
equivalent to $\check f_*$ and $f_*$, respectively.
In particular, the number of right equivalence 
classes of $f,\check f,f_*,\check f_*$
is two.
 
Conversely, we suppose that two of $\{f,\check f,f_*,\check f_*\}$ are
right equivalent.
Replacing $f$ by $\check f,\,\, f_*,\,\, \check f_*$,
we may assume that one of the right equivalent pair is
$f$ and the other is $g\in \{\check f,\,\, f_*,\,\, \check f_*\}$.
Without loss of generality, we may assume that
$f$ is written in a normal form.
Since $\check f$ cannot be right equivalent to $f$, the map
$g$ must be right equivalent to $f_*$ or $\check f_*$,
that is, there exists a local diffeomorphism $\phi$ such that
$g=f\circ \phi$, which implies
$\phi^*ds^2_f=ds^2_f$.
If $\phi$ is an identity map, then $g=f$ holds.
However, it contradicts the fact that
$u\mapsto f(u,0)$ and
$u\mapsto f_*(u,0)=\check f_*(u,0)$ 
give mutually distinct 
orientations to $C$.
So, by Corollary \ref{cor:ds2}, $\phi$ must be an effective symmetry of $ds^2_f$.
\end{proof}

\begin{corollary}\label{1}
Let $f\in {\mc G}^{\omega}_{**,3/2}(\R^2_J,\R^3,C)$.
Suppose that
\begin{itemize}
\item[(1)] $C$ is planar and does not admit 
any non-trivial symmetry at $\mb 0$, and
\item[(2)] $ds^2_f$ admits no effective symmetries
$($cf. Definition \ref{def:ds-sym2}$)$.
\end{itemize}
Then 
\begin{itemize}
\item $\check f:=S\circ f$ holds,
where $S\in \op{O}(3)$ is the reflection with respect to the plane
containing $C$,
\item 
the isometric dual, inverse and the inverse dual are
given by $S\circ f$, $f_*$ and $S\circ f_*$, respectively.
Moreover, $f_*$
is not congruent to $f$.
\end{itemize}
In particular, the four maps consist of two congruence classes.
\end{corollary}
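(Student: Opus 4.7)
The plan is to identify the four isomers explicitly using Remark \ref{rem:C}, and then to rule out the remaining possibility that $f_*$ is congruent to $f$ using assumptions (1) and (2) simultaneously. First, I would invoke Remark \ref{rem:C} applied to $f$: since $C$ lies in a plane and $f$ is real analytic, the reflection $S$ across that plane is a trivial symmetry of $C$, and the remark yields $\check f=S\circ f$ directly. The same argument applied to $f_*\in {\mc G}^\omega_{**,3/2}(\R^2_J,\R^3,C)$ (which is a cuspidal edge by Fact \ref{f2} together with Theorem~{\rm II}) gives $\check f_*=S\circ f_*$. Hence the four maps form the set $\{f,\,S\circ f,\,f_*,\,S\circ f_*\}$, and the pairs $\{f,\check f\}$ and $\{f_*,\check f_*\}$ are each congruent through the isometry $S$.

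It then remains to show that $f_*$ is not congruent to $f$; combined with the obvious congruences just noted, this gives exactly two congruence classes. Suppose, toward a contradiction, that $T\in \op{O}(3)$ makes $T\circ f_*$ share the same image as $f$. Then $T\circ f_*(u,0)$ parametrizes $C$, so $T(C)=C$, meaning $T$ is a symmetry of $C$. By assumption (1), $T$ must be trivial, i.e.\ $T$ fixes every point of $C$; since $\kappa>0$ implies $C$ is not contained in a line, the fixed subspace of $T\in \op{O}(3)$ must contain the plane of $C$, forcing $T\in \{\mathrm{id},\,S\}$. If $T=\mathrm{id}$, then by Remark \ref{Z} the map $f_*$ is right equivalent to $f$, contradicting Theorem~{\rm II}~(1) which says $f_*$ is an isomer of $f$. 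If $T=S$, then $\check f_*=S\circ f_*$ has the same image as $f$, so by Remark \ref{Z} we may write $\check f_*=f\circ \phi$ for some diffeomorphism $\phi$; the identity $ds^2_{\check f_*}=ds^2_f$ then gives $\phi^* ds^2_f=ds^2_f$, and since the orientations of the singular curve induced by $\check f_*(u,0)$ and $f(u,0)$ are opposite, $\phi$ reverses the orientation of the singular set, making $\phi$ an effective symmetry of $ds^2_f$, contradicting assumption~(2).

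The main subtle point is the sub-case $T=S$: even though $S$ is only a trivial symmetry of $C$, its composition with the non-faithful isomer $f_*$ equals $\check f_*$, and from that identity one must extract an effective symmetry of $ds^2_f$ through Remarks \ref{Z} and \ref{rem:C}. Everything else reduces to routine bookkeeping built on Remark \ref{rem:C}, Remark \ref{Z}, Theorem~{\rm II}, and the planarity hypothesis, together with the elementary fact that a nontrivial element of $\op{O}(3)$ fixing a curve of positive curvature in a plane can only be the reflection across that plane.
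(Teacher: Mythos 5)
Your proof is correct and follows essentially the same route as the paper: identify the four maps $\{f,\,S\circ f,\,f_*,\,S\circ f_*\}$ via Remark \ref{rem:C} and Theorem~II, then rule out a congruence $T\circ f_*\circ\phi=f$ by using hypothesis (1) to force $T$ to be trivial and hypothesis (2) to exclude the resulting orientation-reversing symmetry $\phi$ of $ds^2_f$. Your explicit split into the sub-cases $T=\mathrm{id}$ (handled by Remark \ref{Z} and the isomer property) and $T=S$ (handled by producing an effective symmetry) is just a slightly more detailed version of the paper's single-stroke argument that $\phi$ must be an effective symmetry, so no further comment is needed.
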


\begin{proof}
As seen in Remark \ref{rem:C},
$\check f:=S\circ f$ holds.
We next prove the second assertion.
Since $C$ lies in a plane, $\mc I_C(f)=S\circ f$ holds.
By applying Theorem {\rm II},
the right equivalence classes of $\mc J^{-1}_{C}(\mc J_C(f))$ are
represented by
$\{f, \,\,S\circ f,\,\,f_*, \,\,S\circ f_*\}$.
It is sufficient to show that $f_*$ is not congruent
to $f$.  If not, then, by Remark \ref{Z},
there exist $T\in \op{O}(3)$ and 
a diffeomorphism $\phi$ defined
on a neighborhood of the singular curve of $f$
such that
$
T\circ f_*\circ \phi=f
$.
In particular, $\phi^*ds^2_f=ds^2_f$ holds.
By (1), $T$ is not non-trivial.
So, $\phi$ must be an effective symmetry,
contradicting (2).
\end{proof}

We next consider the case that $ds^2_f$ has an 
effective symmetry.

\begin{proposition}\label{2}
Let $f\in {\mc G}^\omega_{**,3/2}(\R^2_J,\R^3,C)$.
Suppose that
\begin{itemize}
\item[(1)] $C$ is non-planar and does not admit 
any non-trivial 
symmetry at $\mb 0$, 
\item[(2)] $ds^2_f$ admits an 
effective symmetry $\phi$.
\end{itemize}
Then 
$\check f(:=\mc I_C(f))$ is not congruent to  $f$, 
and
$\check f,\,\, \check f\circ \phi$ and $f\circ \phi$
give the isometric dual, inverse and inverse dual, respectively.
\end{proposition}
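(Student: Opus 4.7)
The plan is to split the statement into two parts. The first, that $\check f$ is not congruent to $f$, follows immediately from Proposition \ref{cor:key0}: hypothesis (1) says $C$ is non-planar and admits no non-trivial (in particular no positive) symmetry, so neither of the two cases in that proposition is in force.

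For the second part, I identify $\check f\circ\phi$ and $f\circ\phi$ via the uniqueness assertion in Theorem \ref{thm:S1} applied to the reversed space curve $\mb c(-u)$. Since $\phi$ is an effective symmetry of $ds^2_f$, Corollary \ref{cor:ds2} tells us that $\phi$ is an involution reversing the orientation of the singular curve and satisfies $\phi^*ds^2_f=ds^2_f$. Hence both $f\circ\phi$ and $\check f\circ\phi$ have first fundamental form $ds^2_f$ and parametrize $C$ along $\mb c(-u)$. Theorem \ref{thm:S1} produces exactly two generalized cuspidal edges with these properties, namely the inverse $f_*$ and the inverse dual $\check f_*$; so each of the two maps in question must coincide with one of them.

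To pick the right one, I compare cuspidal angles. Writing $\theta$ for the cuspidal angle of $f$ at $\mb c(u)$, the reversed Frenet frame along $\mb c(-u)$ satisfies $\mb n_*=\mb n(-u)$ and $\mb b_*=-\mb b(-u)$, from which one reads off that the cuspidal angle of $f\circ\phi$ at parameter $u$ equals $-\theta(-u)$. On the other hand, the proof of Theorem~{\rm II} gives $\cos\theta_*(u)=\kappa_s(u)/\kappa(-u)$ for the cuspidal angle $\theta_*$ of $f_*$. Because arc-length is intrinsic to $ds^2_f$ and $\phi$ is an orientation-reversing involution of the singular curve fixing the midpoint, $\phi$ must act as $u\mapsto -u$ along the singular curve; since $\kappa_s$ is an intrinsic invariant, $\phi^*ds^2_f=ds^2_f$ then forces $\kappa_s(-u)=\kappa_s(u)$. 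Comparing with $\cos\theta(-u)=\kappa_s(-u)/\kappa(-u)$ yields $\cos\theta_*(u)=\cos\theta(-u)$, and the admissibility hypothesis \eqref{eq:kks3} keeps $\sin\theta$ and $\sin\theta_*$ of constant nonzero sign on $J$, which upgrades this to $\theta_*(u)=\theta(-u)$. Therefore the cuspidal angle of $f\circ\phi$ equals $-\theta_*(u)$, matching that of $\check f_*$; by uniqueness in Theorem \ref{thm:S1}, $f\circ\phi=\check f_*$. Running the same computation with $\check f$ in place of $f$ (so with $-\theta$ in place of $\theta$) gives $\check f\circ\phi=f_*$.

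The principal obstacle will be the sign-matching step: one must first verify that any effective symmetry acts as $u\mapsto -u$ on the arc-length-parametrized singular curve (so that $\kappa_s$ is forced to be even in $u$), and then invoke admissibility to eliminate the ambiguity when passing from the identity $\cos\theta_*(u)=\cos\theta(-u)$ to the identity of the angles themselves.
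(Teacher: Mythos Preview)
Your argument is correct and follows the same approach as the paper: Proposition~\ref{cor:key0} handles the non-congruence, and the identification of $f\circ\phi$ and $\check f\circ\phi$ with $\check f_*$ and $f_*$ is done by comparing cuspidal angles along the reversed curve via Remark~\ref{rem:key2}. The paper's version is slightly quicker---it compares only the \emph{signs} of the cuspidal angles (using the convention $\theta_*\theta>0$ built into the definition of $f_*$ in the proof of Theorem~II) rather than establishing the exact equality $\theta_*(u)=\theta(-u)$, so your detour through $\kappa_s(-u)=\kappa_s(u)$ is valid but unnecessary, and note that your upgrade from $\cos\theta_*(u)=\cos\theta(-u)$ to $\theta_*(u)=\theta(-u)$ tacitly relies on that same sign convention anyway.
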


\begin{proof}
By Proposition \ref{cor:key0},
$\check f$ is not congruent to $f$.
Since $\check f\circ \phi$ 
(resp. $f\circ \phi$) has
the same first fundamental form as $f$,
the fact that $\phi$ is effective yields
that it coincides with either $f_*$ or $\check f_*$.
Since the cuspidal angle of $\check f\circ \phi$ 
(resp. $f\circ \phi$) takes 
the opposite sign 
(resp. the same sign) of that of $f$ (cf. Remark \ref{rem:key2}),
we have $f_*=\check f\circ \phi$ (resp. $\check f_*=f\circ \phi$).
\end{proof}

\begin{corollary}\label{3}
Let $f\in {\mc G}^{\omega}_{**,3/2}(\R^2_J,\R^3,C)$.
Suppose that
\begin{itemize}
\item[(1)] $C$ is planar and does not admit  
any non-trivial symmetry at the origin $\mb 0$, 
\item[(2)] $ds^2_f$ admits an 
effective symmetry $\phi$.
\end{itemize}
Then 
\begin{itemize}
\item $\check f=S\circ f$ holds,
where $S\in \op{O}(3)$ is the reflection with respect to the plane
containing $C$.
\item 
Moreover, 
$S\circ f,\,\, S\circ f\circ\phi, f\circ \phi$ 
give the isometric dual, inverse and inverse dual, respectively.
\end{itemize}
As a consequence, all of isomers are congruent to $f$.
\end{corollary}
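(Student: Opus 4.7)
The plan is to combine Remark \ref{rem:C} with the identification strategy used in the proof of Proposition \ref{2}, and then to verify congruence by an elementary image-comparison argument.

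First, I would invoke Remark \ref{rem:C}: since $C$ lies in a plane and $S \in \op{O}(3)$ is the reflection across that plane, $S$ is a trivial symmetry of $C$, so $\check f = S \circ f$. This settles the first bullet of the statement.

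Next, I would mimic the argument of Proposition \ref{2} to identify the inverse and the inverse dual. The key point is that $\phi^*ds^2_f = ds^2_f$ and $S$ is an isometry of $\R^3$, so both $S \circ f \circ \phi$ and $f \circ \phi$ have $ds^2_f$ as their first fundamental form. Effectiveness of $\phi$ reverses the orientation of the singular curve, so both maps are non-faithful isomers of $f$; hence by Theorem~II\,(4) each must be right equivalent to one of $\{f,\check f,f_*,\check f_*\}$, and the orientation reversal rules out $f$ and $\check f$. To decide which of $f_*$ and $\check f_*$ each candidate is, I would use that under the reflection $S$ the vectors $\mb e$ and $\mb n$ are preserved while $\mb b$ is negated, so composition with $S$ reverses the sign of the cuspidal angle (cf.\ \eqref{eq:fvv}), while the reparametrization $\phi$ itself flips the cuspidal angle as in Remark \ref{rem:key2}. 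Combining these two sign effects, together with the uniqueness of the realization problem given by Theorem \ref{thm:S1}, I expect to obtain $S \circ f \circ \phi = f_*$ and $f \circ \phi = \check f_*$.

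For the final congruence assertion, I would observe that $S \circ f$ and $S \circ f \circ \phi$ have image equal to $S$ applied to the image of $f$, while $f \circ \phi$ has the same image as $f$. By Definition \ref{def:distinct} all three maps are therefore congruent to $f$ (with $T = S$ in the first two cases and $T$ the identity in the third), and by Theorem~II\,(4) this exhausts the isomers of $f$ up to right equivalence.

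The main obstacle I expect is the cuspidal-angle bookkeeping that separates $f_*$ from $\check f_*$: one must carefully combine the effect of the planar reflection $S$ on the frame $(\mb e,\mb n,\mb b)$ with the sign behavior given in Remark \ref{rem:key2}, while being wary of the distinction between the cuspidal angle at a point $P \in C$ and the cuspidal angle as a function of the parameter $u$.
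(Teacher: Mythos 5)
Your proposal is correct and follows essentially the same route as the paper: Remark \ref{rem:C} gives $\check f=S\circ f$, effectiveness of $\phi$ forces $S\circ f\circ\phi$ and $f\circ\phi$ to be $f_*$ or $\check f_*$, and the sign bookkeeping for the cuspidal angle (reflection $S$ negating $\mb b$, reparametrization as in Remark \ref{rem:key2}) identifies $S\circ f\circ\phi=f_*$ and $f\circ\phi=\check f_*$, after which congruence of all four maps to $f$ is immediate. Your image-comparison justification of the final congruence statement is slightly more explicit than the paper's \lq\lq it is obvious\rq\rq, but the argument is the same.
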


\begin{proof}
As we have seen in 
Remark \ref{rem:C},
$\check f=S\circ f$ holds.
Since $S\circ f\circ \phi$ 
(resp. $f\circ \phi$) has
the same first fundamental form as $f$,
the fact that $\phi$ is effective yields
it coincides with $f_*$ or $\check f_*$.
Since the sign of cuspidal angle of $S\circ f\circ \phi$
(resp. $f\circ \phi$)
along the curve $\mb c_\#(u):=\mb c(-u)$
takes the opposite sign (resp. the same sign)
of that of $f$, we have
$f_*=S\circ f\circ \phi$ (resp. $\check f_*=f\circ \phi$).
Finally, it is obvious that the four maps
are congruent. So the proposition is proved.
\end{proof}

We then consider the case that $C$ has a 
non-trivial symmetry.

\begin{proposition}\label{4}
Let $f\in {\mc G}^\omega_{**,3/2}(\R^2_J,\R^3,C)$.
Suppose that 
\begin{itemize}
\item[(1)] $C$ is non-planar and admits a non-trivial
symmetry $T\in \op{O}(3)$ at $\mb 0$,
\item[(2)] $ds^2_f$ does not admit 
any effective symmetries.
\end{itemize}
Then 
\begin{itemize}
\item $\check f:=\mc I_C(f)$ is not congruent to  
$f$, and
\item 
$T\circ \check f,\,\,T\circ f$
are the inverse and inverse dual, respectively.
\end{itemize}
In particular, 
$f, \,\, \check f,\,\, T\circ \check f$ and $T\circ f$ 
consist of two congruence classes.
\end{proposition}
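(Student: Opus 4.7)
The plan is to verify the three assertions in order: noncongruence of $\check f$ with $f$ via Proposition~\ref{cor:key0}, identification of $T\circ f$ and $T\circ\check f$ with $\check f_*$ and $f_*$ via Theorem~II together with a cuspidal-angle computation, and then a short counting argument for the congruence classes. For the first claim, I apply Proposition~\ref{cor:key0}: by hypothesis~(1) of the present proposition, $C$ is non-planar, which excludes its case~(1); by hypothesis~(2), $ds^2_f$ admits no effective symmetry, which excludes its case~(2); hence $\check f$ is not congruent to $f$.

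Next, I identify $T\circ f$ and $T\circ\check f$ within $\{f_*,\check f_*\}$. Since $T\in\op{O}(3)$ is a Euclidean isometry, both maps inherit the first fundamental form $ds^2_f$. Because $T$ is a non-trivial symmetry of $C$ reversing its orientation, $T^2$ fixes $C$ pointwise; non-planarity of $C$, whose affine span is all of $\R^3$, then forces $T^2=I$, whence in arc-length parametrization with $\mb c(0)=\mb 0$ we obtain $T\mb c(u)=\mb c(-u)$, showing that $T\circ f$ and $T\circ\check f$ parametrize $C$ in the direction opposite to $f$. Theorem~II(4) then places each of them within $\{f_*,\check f_*\}$ up to right equivalence, and since $f$ and $\check f$ have cuspidal angles of opposite signs (Theorem~I(iii)) so do $T\circ f$ and $T\circ\check f$; hence the two maps exhaust the pair.

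To determine the specific pairing, I would compute the cuspidal angle of $T\circ f$ in the Frenet frame of $\tilde{\mb c}(u):=\mb c(-u)$, and similarly for $T\circ\check f$. The relations $T\mb n(u)=\tilde{\mb n}(u)$ and $T\mb b(u)=-\sigma\,\mb b(-u)$, with $\sigma:=\det T$, follow from differentiating $T\mb c(u)=\mb c(-u)$ twice and applying $T$ to the cross product defining $\mb b$. Substitution into the representation~\eqref{eq:repF2}, rewritten in the tilded frame, then produces the cuspidal angles of $T\circ f$ and $T\circ\check f$ at parameter~$u$ in terms of $\theta(u)$ and $\sigma$; comparison with the sign convention $\theta_*\theta>0$ from the proof of Theorem~II identifies $T\circ\check f$ with the inverse $f_*$ and $T\circ f$ with the inverse dual $\check f_*$, as asserted. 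Finally, since $T\circ f$ is congruent to $f$ and $T\circ\check f$ is congruent to $\check f$ via the isometry $T$, combining this with the first step yields precisely the two congruence classes $\{f,\,T\circ f\}$ and $\{\check f,\,T\circ\check f\}$.

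The main obstacle lies in the sign bookkeeping of the cuspidal-angle computation: the factor $\sigma=\det T$ arising from the action of $T$ on the binormal must be propagated consistently through the $(\cos\theta,-\sin\theta)$-rotation in~\eqref{eq:repF2}, and balanced against the tilded Frenet relation $\tilde{\mb b}(u)=-\mb b(-u)$, so that the resulting cuspidal angles line up in the order prescribed by the $\theta_*$-sign convention of Theorem~II and give the pairing claimed in the statement.
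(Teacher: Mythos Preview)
Your approach is correct and is essentially what the paper's terse proof has in mind; the paper only invokes Proposition~\ref{cor:key0} for the noncongruence of $\check f$ with $f$ and then declares the remaining identifications ``easy''. Your argument via Theorem~II(4)---that $T\circ f$ and $T\circ\check f$ share $ds^2_f$ with $f$, land on $C$ with reversed orientation, and hence must exhaust $\{f_*,\check f_*\}$---together with the congruence-class count, is exactly the content the paper leaves implicit.

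One caveat on the specific pairing you assert at the end. If you actually carry out the cuspidal-angle computation you outline, the factor $\sigma=\det T$ does not cancel: using $T\mb n(u)=\tilde{\mb n}(u)$ and $T\mb b(u)=\sigma\tilde{\mb b}(u)$ in the tilded frame, one finds that $T\circ f$ has cuspidal angle $\sigma\theta(u)$ along $\tilde{\mb c}$. Comparing with the convention $\theta_*\theta>0$ for $f_*$, this gives $T\circ f\sim f_*$ when $\sigma=+1$ and $T\circ f\sim\check f_*$ when $\sigma=-1$. Thus the labeling ``$T\circ\check f$ is the inverse, $T\circ f$ is the inverse dual'' as stated holds literally only for negative $T$; for positive $T$ the labels swap. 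This does not affect the ``two congruence classes'' conclusion, since either way $\{f,T\circ f\}$ and $\{\check f,T\circ\check f\}$ are the classes, but your write-up should either split into the two cases or note that the pairing is up to this swap.
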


\begin{proof}
By Proposition \ref{cor:key0},
$\check f$ is not congruent to $f$.
So the assertion  can be shown easily.
\end{proof}

We get the following corollary.

\begin{corollary}\label{cor:symP}
Let $f\in {\mc G}^{\omega}_{**,3/2}(\R^2_J,\R^3,C)$.
Suppose that $C$ lies in a plane and admits  
a non-trivial symmetry $T$ at the origin $\mb 0$. 
Then $\check f=S\circ f$ holds, and
$T\circ f,\,S\circ T\circ f$ give the inverse and the
inverse dual of $f$, 
where $S$ is a reflection with respect to the plane.
As a consequence,
$f,\check f, f_*,\check f_*$ 
belong to a single 
congruence class.
\end{corollary}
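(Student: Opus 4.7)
The plan is to verify the two displayed identifications and then derive the congruence claim. Since $C$ lies in the plane $\Pi$ fixed pointwise by $S$, Remark \ref{rem:C} directly yields $\check f = S\circ f$, which disposes of the isometric-dual identification.

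Next, I would show that both $T\circ f$ and $S\circ T\circ f$ belong to $\mc G^\omega_{**,3/2}(\R^2_J,\R^3,C)$ and are non-faithful isomers of $f$. Analyticity and the cuspidal-edge type are preserved under composition with an element of $\op{O}(3)$; the singular set images are $T(C)=C$ and $(S\circ T)(C)=C$; and both first fundamental forms coincide with $ds^2_f$, so admissibility is inherited from $f$. Moreover, since $T$ is a non-trivial symmetry fixing $\mb 0$ and acting isometrically on the arc-length parametrized curve $\mb c$, we have $T\mb c(u)=\mb c(-u)$. Consequently the singular curves of $T\circ f$ and of $S\circ T\circ f$ (the latter because $S$ fixes $C$ pointwise) both traverse $C$ in the reversed orientation. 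Thus each is a non-faithful isomer of $f$, and by part (4) of Theorem II each is right equivalent to $f_*$ or to $\check f_*$.

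To see that $\{T\circ f,\,S\circ T\circ f\}$ realizes both classes $\{f_*,\,\check f_*\}$, I would compare the cuspidal directions at each point $\mb c(-u)\in C$. Applying $S$ to the cuspidal direction of $T\circ f$ fixes the principal-normal component of $\mb c$ at $\mb c(-u)$ (which lies in $\Pi$) and reverses the binormal component (which is perpendicular to $\Pi$); via \eqref{eq:fvv} this changes the cuspidal angle by a sign, so $T\circ f$ and $S\circ T\circ f$ have opposite cuspidal angles at every point of $C$. Since admissibility forces $\sin\theta\not\equiv 0$, the two maps cannot be right equivalent, so they realize both members of $\{f_*,\,\check f_*\}$. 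The precise assignment—identifying which of the two is $f_*$—is obtained by tracking the sign convention $\theta_*(u)\theta(u)>0$ from the proof of Theorem II through the computation of Remark \ref{rem:key}, split into the subcases $\det T=\pm 1$ using the relation $T\mb b(u)=-(\det T)\,\mb b(-u)$.

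Finally, the congruence conclusion is immediate: all four maps $f$, $\check f$, $f_*$, $\check f_*$ are of the form $A\circ f$ with $A\in \{I,\,S,\,T,\,S\circ T\}\subset \op{O}(3)$, so by Definition \ref{def:distinct} they form a single congruence class. The main obstacle I anticipate is the sign-bookkeeping needed to pin down which of $T\circ f$ and $S\circ T\circ f$ is the inverse and which is the inverse dual; this is where the positive versus negative character of $T$ genuinely enters, and it requires careful use of the Frenet-frame identities derived in Remark \ref{rem:key} together with the fact that $S$ and $T$ commute because $T$ preserves $\Pi$.
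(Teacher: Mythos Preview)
Your proposal is correct and follows essentially the same line as the paper. The paper's proof is even terser: after noting $\check f=S\circ f$ via Remark~\ref{rem:C}, it simply observes that $T\circ f$ is a non-faithful isomer and that its isometric dual is $S\circ(T\circ f)$---again by Remark~\ref{rem:C}, since $T\circ f$ still has the planar curve $C$ as its singular image---so the pair $\{T\circ f,\,S\circ T\circ f\}$ automatically coincides with $\{f_*,\check f_*\}$. This sidesteps the explicit cuspidal-angle comparison you carry out, and in particular avoids the sign-bookkeeping you flag as an obstacle: the paper does not commit to which of the two is literally $f_*$, and the statement of the corollary should be read as identifying the pair rather than a specific ordering (contrast Proposition~\ref{4}, where ``respectively'' is used).
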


\begin{proof}
Obviously, $\check f=S\circ f$ holds
(cf. Remark \ref{rem:C}).
On the other hand, $T\circ f$ gives
a non-faithful isomer,
and its isometric dual $S\circ T\circ f$
also gives another non-faithful isomer.
\end{proof}

\begin{figure}[thb]
\begin{center}
\includegraphics[height=3.5cm]{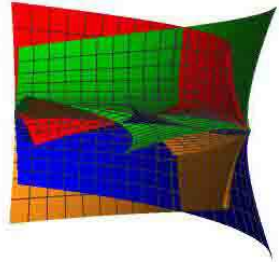}
\caption{
The four cuspidal edges given in Example \ref{ex:symP}
}
\end{center}\label{fig:SymP0}
\end{figure}

\begin{example}\label{ex:symP}
We set
$$
f(u,v):=\biggl(\phi(u,v) \cos u-1,\,\,
\phi(u,v) \sin u,\,\,v^3u+2v^3-v^2\biggr),
$$
where $\phi(u,v):=-v^3 u-2v^3-v^2+1$.
Then, it has cuspidal edge singularities along
$$
\mb c(u)\,(:=f(u,0))=
(\cos u-1,\sin u, 0).
$$
By setting, 
$$
S:=\left(
\begin{array}{ccc}
 1 & 0 & 0 \\
 0 & 1 & 0 \\
 0 & 0 & -1 
\end{array}
\right), \qquad
T:=\left(
\begin{array}{ccc}
 1 & 0 & 0 \\
 0 & -1 & 0 \\
 0 & 0 & 1 \\
\end{array}
\right),
$$
$S\circ f$ is the faithful isomer,
and $T\circ f,\,\, TS\circ f$
are non-faithful isomers. 
We remark that $f$ is associated to 
Fukui's data $(\theta,A,B)$
given by
$$
\theta=\frac{\pi}4, \quad
A(u,v):=\sqrt{2}v^2,\quad  B(u,v):=\sqrt{2}v^3(u+2).
$$
\end{example}

Finally, we consider the case that $C$ 
and $ds^2_f$ admit a symmetry and 
an effective symmetry, respectively.

\begin{proposition}\label{prop:Sym2}
Let $f\in {\mc G}^{\omega}_{**,3/2}(\R^2_J,\R^3,C)$.
Suppose that 
\begin{itemize}
\item[(1)] $C$ is non-planar and admits a non-trivial
symmetry $T\in \op{O}(3)$ at $\mb 0$, 
\item[(2)] $ds^2_f$ admits an 
effective symmetry $\phi$.
\end{itemize}
Then any isomer of $f$ is right equivalent to
one of $\check f,\,\, \check f\circ \phi$,\,\, $f\circ \phi$. 
Moreover,
\begin{itemize}
\item if $T$ is positive $($i.e. $T\in \op{SO}(3))$, then 
$\check f=T\circ f\circ \phi$, and
\item if $T$ is negative $($i.e. $T\not\in \op{SO}(3))$, then 
$\check f$ is not congruent to $f$.
\end{itemize}
\end{proposition}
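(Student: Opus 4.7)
The plan is to reduce the first assertion to Theorem~II via the effective symmetry $\phi$, and to resolve the two sign cases of the ``moreover'' clause through a direct cuspidal-angle computation built on Remark~\ref{rem:key}.

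\emph{First, the decomposition of isomers.} By Theorem~II every isomer of $f$ is right equivalent to one of $\check f$, $f_*$, or $\check f_*$, so it suffices to identify $f\circ\phi$ and $\check f\circ\phi$ with representatives of $f_*$ and $\check f_*$. Since $\phi$ is an effective symmetry of $ds^2_f$---an involution reversing the orientation of the singular curve by Corollary~\ref{cor:ds2}---both $f\circ\phi$ and $\check f\circ\phi$ are generalized cuspidal edges along $C$ with first fundamental form $ds^2_f$ and with singular-curve orientation opposite to $f$. By the uniqueness in Theorem~\ref{thm:S1} (applied to the reversed orientation of $C$), exactly two right equivalence classes realize such data, namely $f_*$ and $\check f_*$; moreover $f\circ\phi$ and $\check f\circ\phi$ are not right equivalent to each other (else right-composing with $\phi^{-1}$ would right-equivate $f$ to $\check f$, contradicting Theorem~I), so they exhaust the pair $\{f_*,\check f_*\}$.

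\emph{The positive case ($T\in\op{SO}(3)$).} I would consider $T\circ f\circ\phi$. Its first fundamental form is $\phi^*ds^2_f=ds^2_f$, and since $\phi$ reverses the singular-curve orientation while $T$ acts on $C$ as $\mb c(u)\mapsto\mb c(-u)$, the composition $T\circ f\circ\phi$ parametrizes $C$ in the standard orientation. By the uniqueness in Theorem~\ref{thm:S1}, it is right equivalent to either $f$ or $\check f$. To pick the correct one, the key computation (as in Remark~\ref{rem:key}) uses $T\mb n(u)=\mb n(-u)$ and $T\mb b(u)=-\sigma\mb b(-u)$ with $\sigma:=\det T$, giving the cuspidal direction of $T\circ f$ at $\mb c(w)$ as $\cos\theta(-w)\,\mb n(w)+\sigma\sin\theta(-w)\,\mb b(w)$; hence the cuspidal angle $\tilde\theta(w)$ of $T\circ f\circ\phi$ satisfies $\sin\tilde\theta(w)=-\sigma\sin\theta(-w)$. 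For $\sigma=+1$ this gives $\tilde\theta(w)=-\theta(-w)$, whose sign is opposite to that of $\theta(w)$ because admissibility forces $\sin\theta$ to have constant sign on $J$. Thus $T\circ f\circ\phi$ shares a cuspidal-angle sign with $\check f$, and Theorem~\ref{thm:S1} identifies it with $\check f$.

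\emph{The negative case ($T\notin\op{SO}(3)$).} For $\sigma=-1$ the same computation gives $\sin\tilde\theta(w)=\sin\theta(-w)$, whose sign agrees with that of $\theta(w)$, so $T\circ f\circ\phi$ is right equivalent to $f$ rather than $\check f$; in particular $T$ itself fails to witness any congruence between $f$ and $\check f$. To rule out such a congruence via any other isometry, I would invoke Proposition~\ref{cor:key0}: a congruence $\check f\cong f$ would force either $C$ to be planar---excluded by hypothesis~(1)---or the existence of a positive non-trivial symmetry of $C$ together with an effective symmetry of $ds^2_f$. Since $T$ is negative (and is, under the intended reading of the hypothesis, the only non-trivial symmetry of $C$ at $\mb 0$), no positive symmetry exists, giving the desired contradiction.

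\emph{Main obstacle.} The sharpest step is the cuspidal-angle computation: getting the sign $-\sigma\sin\theta(-w)$ right requires careful tracking of how $T$ twists the Frenet frame along $C$, and the admissibility bound $|\kappa_s|<\kappa$ is exactly what guarantees $\sin\theta$ does not change sign so the sign comparison is unambiguous. A secondary subtlety is the tacit uniqueness of $T$ as a non-trivial symmetry in the negative case, needed so that Proposition~\ref{cor:key0} can deliver the non-congruence.
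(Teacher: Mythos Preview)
Your proposal is correct and follows essentially the same line as the paper's own proof: the paper also sets $g:=T\circ f\circ\phi$, cites Remark~\ref{rem:key} to conclude $g$ is a faithful isomer of $f$ when $T$ is positive (hence $g=\check f$ by uniqueness), and invokes Proposition~\ref{cor:key0} directly in the negative case. Your treatment is simply more explicit, spelling out the cuspidal-angle computation that Remark~\ref{rem:key} packages, and supplying the identification of $f\circ\phi,\check f\circ\phi$ with $\{f_*,\check f_*\}$ for the first assertion (which the paper leaves implicit, relying on the pattern of Proposition~\ref{2}).

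One small point worth tightening: your ``secondary subtlety'' about tacit uniqueness of $T$ in the negative case is in fact automatic and need not be read into the hypothesis. If $T_1,T_2$ are both non-trivial symmetries of a non-planar $C$, then $T_1T_2$ fixes $C$ pointwise; since $C$ spans $\R^3$, $T_1T_2=\mathrm{id}$, so $T_2=T_1^{-1}=T_1$ (each $T_i$ being an involution by the same reasoning applied to $T_i^2$). Hence a non-planar $C$ admits at most one non-trivial symmetry, and if that symmetry is negative there is no positive one to feed into Proposition~\ref{cor:key0}. Adding this observation removes the only loose end you flagged.
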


\begin{proof}
We set $g:=T\circ f\circ \phi$.
If $T$ is positive, then
$g$ is a faithful isomer of $f$ 
as shown in Remark~\ref{rem:key}.
On the other hand,
if $T$ is negative, then
$\check f$ is not congruent to $f$ 
by Proposition \ref{cor:key0} and so it 
not congruent to $f$.
\end{proof}

\begin{proof}[Proof of Theorem {\rm IV}]
We suppose that
$C$ has no non-trivial symmetries, and also $ds^2_f$ has no symmetries.
If two of $\{f, \check f, f_*,\check f_*\}$ are
mutually congruent, replacing $f$ by one of its isomers,
we may assume that $f$ is congruent to $g$, where
$g$ is one of $\{\check f, f_*,\check f_*\}$.
By Proposition~\ref{cor:key0}, we may assume that
$g=f_*$ or $g=\check f_*$.
 Suppose that $g$ is congruent to $f$. 
Then (cf. Remark \ref{Z})
there exist a non-trivial symmetry $T\in \op{O}(3)$ of $C$ and 
a local diffeomorphism $\phi$ 
such that
$$
T\circ g\circ \phi=f.
$$
Since $C$ has no non-trivial symmetries, and $ds^2_f$ has also no symmetries,
$\phi$ is the identity map and $T$ is not a non-trivial
symmetry. However, this contradicts the fact that
$u\mapsto f(u,0)$ and
$u\mapsto f_*(u,0)=\check f_*(u,0)$ 
give mutually distinct 
orientations to $C$.
So we obtained (1).

The assertion (2) follows from
Corollaries \ref{1}, \ref{3},  \ref{cor:symP}
and Propositions \ref{2},
 \ref{4}, and \ref{prop:Sym2}, by using the fact that
any symmetries of $ds^2_f$ are effective
(cf. Corollary \ref{cor:ds2}).

Finally, suppose that $N_f=1$.
We first consider the case that $C$ lies in a plane.
If $C$ has no non-trivial symmetries
and $ds^2_f$ has also no symmetries,
then $N_f=2$ holds by
Corollary \ref{1}.
So either $C$ or $ds^2_f$ has a symmetry.
If $C$ has a symmetry, then
$N_f=1$ by Corollary \ref{cor:symP} (this corresponds to the case (a)).
On the other hand, if $C$ has no non-trivial symmetries and
$ds^2_f$ also has a symmetry $\phi$, then $\phi$ is effective
(cf. Corollary \ref{cor:ds2}). So, 
Corollary \ref{3} yields that $N_f=1$.
(This corresponds to the case (b). In fact, we denote by
$T_0$ the reflection with respect to the plane containing $C$.
We let $T_1$ be a non-trivial symmetry of $C$.
If $T_1$ is positive, then (b) holds obviously.
On the other hand, if $T_1$ is negative, then $T_0\circ T_1$
is a positive symmetry and (b) holds.)

So we may assume that $C$ does not lie in
any planes. 
The assumption $N_f=1$ implies $\check f$ must congruent to
$f$. By Proposition \ref{cor:key0}, this holds
only when (c) happens, since $C$ 
does not lie in
any planes. 
\end{proof}

\section{Examples}

One method to give a numerical approximation of a
isometric dual $g$ of a real analytic cuspidal edge $f$
is to determine the Taylor expansion of $g(u,v)$ 
at $v=0$ along the
$u$-axis as a singular set so that $g=\mc I_C(f)$.
In \cite[Page 85]{NUY}, we give a numerical approximation of the
isometric dual of
$$
f_{0}(u,v)=\left(u,-\frac{v^2}{2}+\frac{u^3}{6},
\frac{u^2}{2}+\frac{u^3}{6}+\frac{v^3}{6}\right).
$$
We denote by $C$ the image of singular curve $u\mapsto f_0(u,0)$.
In the figure of the isometric dual $g_0=\mc I_C(f_0)$ given in \cite[Figure 2]{NUY},  
the surface $g_0$ seems like it is lying on the almost opposite side of $f_0$.
This is the reason why the cuspidal angle 
$\theta(u)$ of $f_0(u,v)$ is $\pi/2$ at $u=0$.
The red lines of Figure~3 (left) indicates the section 
of $f_0,g_0$ at $u=-1/4$.
The orange (resp. blue) surface corresponds to $f_0$ (resp. $g_0$). 
We can recognize that the cuspidal angle takes value less than $\pi/2$,
that is, the normal direction of $g_0$
is linearly independent of that of $f_0$ at $(u,v)=(-1/4,0)$.
On the other hand, Figure~3 (right) indicates the images of
the numerical approximations of 
the two non-faithful isomers $f_1,g_1$ of $f_0$.

\begin{figure}[h!]
\begin{center}
\includegraphics[height=3.2cm]{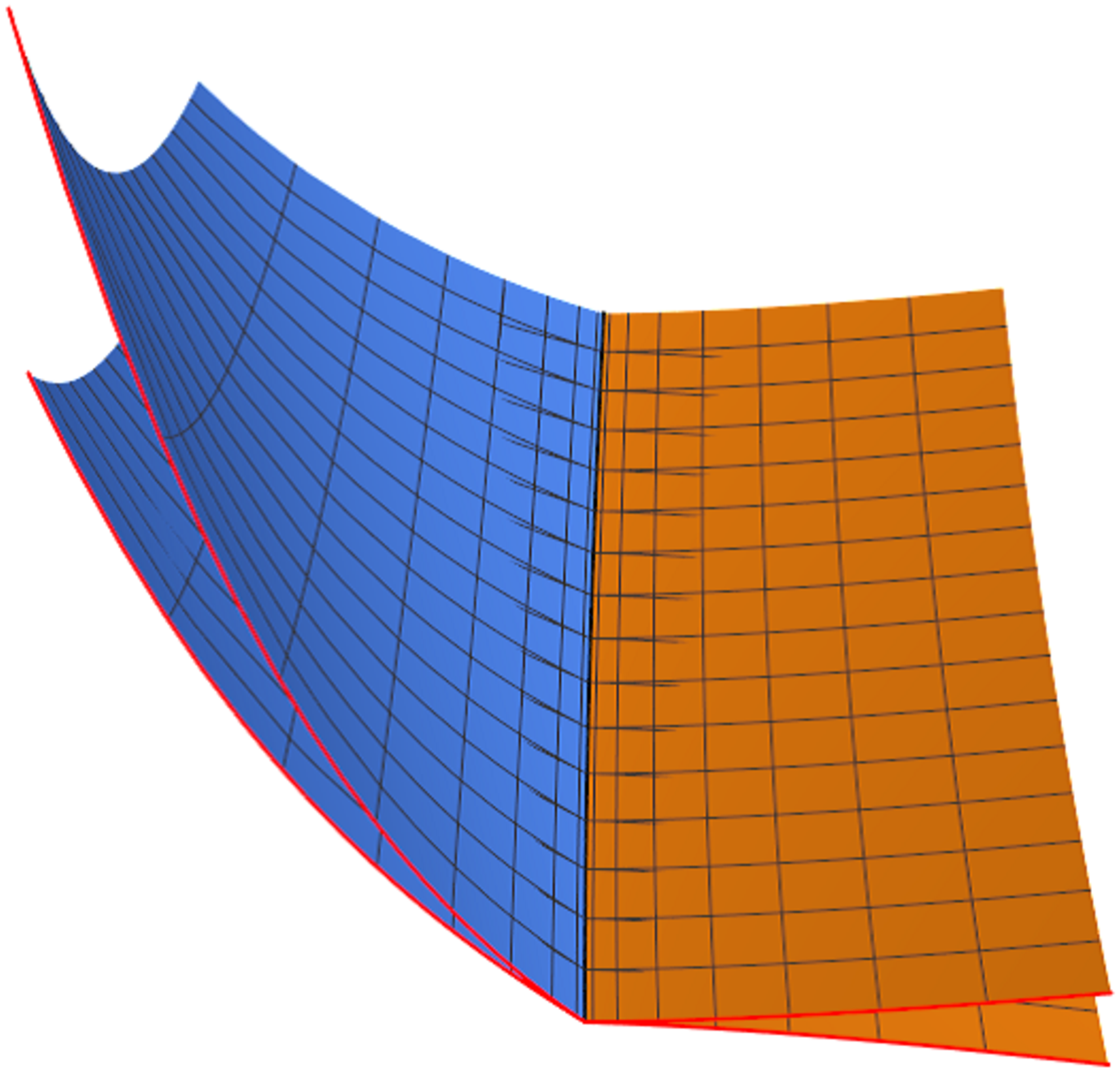}\qquad \quad
\raisebox{-1cm}{\includegraphics[height=3.6cm]{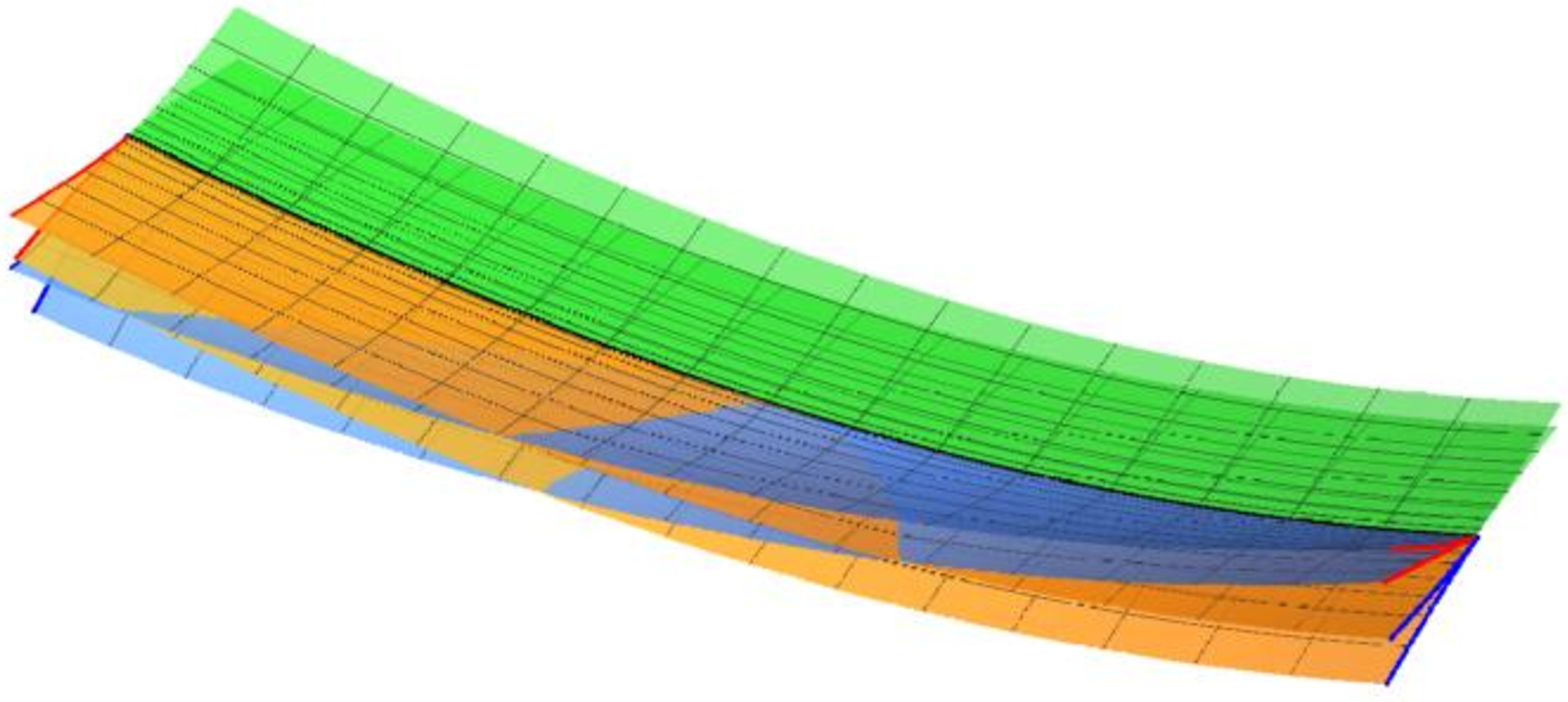}}
\caption{The images of $f_{0},g_0$ (left), and
the images of $f_{0},f_1,g_1$ (right),
where $f_0$ is indicated as the orange surfaces.}
\end{center}\label{fig:NUY}
\end{figure}

By Proposition \ref{prop:EFG},
one can easily observe that 
the first fundamental form of
$f_{-\theta}$ does not coincide with 
that of $f_\theta$.
This means that the image of $f_{-\theta}$ cannot
coincide with that of $f_{\theta}$ nor
$\check f_{\theta}$.
However, 
one might expect the
possibility
that $f_{-\theta}$ is 
an isomer of $f_\theta$.
Here, we consider the case that 
the space curve $C$ has a non-trivial symmetry $T$.
In this case, we know that
$
f,\,\, \check f,\,\,  T\circ f,\,\,  T\circ \check f
$
are only the possibilities of isomers.
Thus, if $f_{-\theta}$ is 
an isomer of $f_\theta$, then
it must be congruent to either $f$ or $\check f$.
We give here the following two propositions
which are related to one of these possibilities
(by the following 
Proposition \ref{thm:V0}, Example \ref{thm5}
is just the case that $f_{-\theta}$ is right equivalent
to $\check f$.) 

\begin{proposition}\label{thm:V0}
Let $C$ be a space curve which 
admits a non-trivial symmetry $T\in \op{SO}(3)$ at $\mb 0$,
and let $f:=f_\theta\in \mc G^\infty(\R^2_J,\R^3,C)$
be a generalized cuspidal edge as in
the formula 
\eqref{eq:repF}
such that
\begin{itemize}
\item $T\circ f(-u,0)=f(u,0)$, and
\item the cuspidal angle $\theta$
satisfies $\theta(u)=\sigma\theta(-u)$ 
where $\sigma\in \{+,-\}$.
\end{itemize}
Suppose that $A(u,v)$ and $B(u,v)$ satisfy 
one of the following two conditions:
\begin{enumerate}
\item $A(-u,-v)=A(u,v)$ and
$B(-u,-v)=-B(u,v)$ or
\item $A(-u,v)=A(u,v)$ and $B(-u,v)=-B(u,v)$.
\end{enumerate}
Then 
$f_\theta=T\circ f_{-\sigma \theta}\circ \phi$
holds, where $\phi(u,v)=(-u,-v)$ $($resp. $\phi(u,v)=(-u,v))$
in the case of {\rm (1)} $($resp. {\rm (2)}$)$. 
In particular, $f_{-\theta}$ is a 
right equivalent to $\check f$ 
if $\sigma=+$, and
the image of $f$ is invariant under $T$
if $\sigma=-$.
\end{proposition}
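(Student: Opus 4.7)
The plan is to prove the identity $f_\theta = T\circ f_{-\sigma\theta}\circ\phi$ by direct substitution into the defining formula \eqref{eq:a23i}, following the pattern of Remark~\ref{rem:key}; the two ``in particular'' conclusions then follow by inspection.

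First I would collect the Frenet-frame identities forced by the symmetry. Since the hypothesis $T\circ f(-u,0)=f(u,0)$ is exactly $T\mb c(-u)=\mb c(u)$, we are in the setting of Remark~\ref{rem:key} with $\mb c_0=\mb c_1=\mb c$ and $\sigma':=\det T=+1$, so that
\begin{equation*}
T\mb e(-u)=-\mb e(u),\qquad T\mb n(-u)=\mb n(u),\qquad T\mb b(-u)=-\mb b(u).
\end{equation*}
The condition $\theta(u)=\sigma\theta(-u)$ together with $\sigma^2=1$ gives $-\sigma\theta(-u)=-\theta(u)$. Denoting by $\mb v_2,\mb v_3$ the rotated frame \eqref{eq:a23} attached to the angle $\theta(u)$, and by $\mb w_2,\mb w_3$ the frame attached to the angle $-\sigma\theta(u)$, expanding $\mb w_j(-u)$ with the angle-sum formulas and applying $T$ then yields
\begin{equation*}
T\mb w_2(-u)=\mb v_2(u),\qquad T\mb w_3(-u)=-\mb v_3(u).
\end{equation*}

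Next I would substitute into \eqref{eq:a23i}. In case~(1), with $\phi(u,v)=(-u,-v)$, we get
\begin{equation*}
T\circ f_{-\sigma\theta}(-u,-v)=\mb c(u)+A(-u,-v)\mb v_2(u)-B(-u,-v)\mb v_3(u),
\end{equation*}
and the parity hypotheses $A(-u,-v)=A(u,v)$, $B(-u,-v)=-B(u,v)$ collapse the right-hand side into $f_\theta(u,v)$. Case~(2) is obtained by the identical calculation with $(-u,v)$ in place of $(-u,-v)$, invoking instead $A(-u,v)=A(u,v)$ and $B(-u,v)=-B(u,v)$.

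Finally, to read off the two consequences: when $\sigma=+$, the identity reads $f_\theta=T\circ f_{-\theta}\circ\phi$, or equivalently $f_{-\theta}=T^{-1}\circ f_\theta\circ\phi^{-1}$, which is precisely the relation characterizing the isometric dual in case~(a) of Remark~\ref{rem:key} (applied to the symmetry $T^{-1}$), so $f_{-\theta}$ is right equivalent to $\check f$ through the diffeomorphism $\phi^{-1}$. When $\sigma=-$, the identity collapses to $f_\theta=T\circ f_\theta\circ\phi$, showing $T(\mathrm{image}(f_\theta))=\mathrm{image}(f_\theta)$, i.e.\ the image of $f_\theta$ is $T$-invariant. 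The whole argument reduces to sign bookkeeping in the transformation of the frame; the only small pitfall is ensuring the parities of $A$ and $B$ are paired with the correct $\phi$ in the two cases.
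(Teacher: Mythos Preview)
Your proposal is correct and takes essentially the same approach as the paper: both derive the Frenet identities $T\mb n(-u)=\mb n(u)$, $T\mb b(-u)=-\mb b(u)$ from Remark~\ref{rem:key}, substitute directly into the representation~\eqref{eq:a23i}, and reduce the identity $f_\theta=T\circ f_{-\sigma\theta}\circ\phi$ to sign bookkeeping on $A,B$, with the ``in particular'' clauses coming from the faithful-isomer uniqueness of Theorem~\ref{thm:S1} (which your citation of Remark~\ref{rem:key}(a) encapsulates). The only cosmetic difference is that the paper treats $\sigma=+$ explicitly and declares $\sigma=-$ analogous, whereas you handle both cases at once via the auxiliary frame $\mb w_2,\mb w_3$.
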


\begin{proof}
We consider the case $\sigma=+$, that is, $\theta(u)=\theta(-u)$.
Since $T\circ \mb c(-u)=\mb c(u)$
and $T\in \op{SO}(3)$ (cf. Remark \ref{rem:key}), 
$$
-T\mb e(-u)=\mb e(u),\quad
T\mb n(-u)=\mb n(u), \quad
\mb b(u)=-T \mb b(-u).
$$
In the case of (1) (resp. (2)),
we set $\phi(u,v):=(-u,-v)$ (resp. $\phi(u,v):=(-u,v)$).
Then $A\circ \phi(u,v)=A(u,v)$ and $B\circ \phi(u,v)=-B(u,v)$
hold, and so
\begin{align*}
T\circ f_\theta\circ \phi
&=\mb c+
(A,-B)
\pmt{\cos \theta & -\sin \theta \\
       \sin \theta & \cos \theta }
\pmt{\mb n \\ -\mb b} \\
&=
\mb c+
(A,B)
\pmt{\cos \theta & \sin \theta \\
       -\sin \theta & \cos \theta }
\pmt{\mb n \\ \mb b} =f_{-\theta},
\end{align*}
proving the relation 
$f_\theta=T\circ f_{-\sigma \theta}\circ \phi$.
The case $\theta(u)=-\theta(-u)$ is proved in
the same way.

We then consider the case that $\sigma=1$.
In this case, 
$f_\theta=T\circ f_{-\theta}\circ \phi$
holds．Since $T$ is an isometry of $\R^3$,
we have $\phi^*ds^2_f=ds^2_g$, where
$f:=f_{-\theta}$ and $g=f_{-\theta}$.
So $g$ is isometric to $f$.
Since the cuspidal angle of $g$ takes
the opposite sign of that of $f$,
the image of $g$ does not coincide with $f$.
So $g$ is a faithful isomer of $f$.
Then the uniqueness of the faithful isomer of $f$
(cf. Theorem \ref{thm:S1})
yields that $g$ is right equivalent to $\check f$.
\end{proof}

Similarly, the following assertion holds.

\begin{proposition}\label{cor:V0}
Let $C$ be a space curve which 
admits a non-trivial symmetry 
$T\in \op{O}(3)\setminus \op{SO}(3)$
at $\mb 0$,
and let $f:=f_\theta\in \mc G^\infty(\R^2_J,\R^3,C)$
be a generalized cuspidal edge
as in the formula \eqref{eq:repF} 
such that
\begin{itemize}
\item $T\circ f(-u,0)=f(u,0)$, and
\item the cuspidal angle $\theta$
satisfies $\theta(u)=\sigma \theta(-u)$, where $\sigma\in \{+,-\}$.
\end{itemize}
Suppose that 
$A(u,v)$ and $B(u,v)$ satisfy one of the following two conditions:
\begin{enumerate}
\item $A(-u,-v)=A(u,v)$ and
$B(-u,-v)=B(u,v)$,
\item $A(-u,v)=A(u,v)$ and $B(-u,v)=B(u,v)$.
\end{enumerate}
Then $f_\theta=T\circ f_{\sigma \theta}\circ \phi$
holds, where $\phi(u,v)=(-u,-v)$ $($resp. $\phi(u,v)=(-u,v))$
in the case of {\rm (1)} $($resp. {\rm (2)}$)$. 
In particular, $f_{-\theta}$ is right equivalent to
$\check f$ 
if $\sigma=-$, and
the image of $f$
is invariant under $T$
if $\sigma=+$.
\end{proposition}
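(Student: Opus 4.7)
The plan is to mimic the proof of Proposition~\ref{thm:V0} essentially verbatim, with the computation modified in two places corresponding to the two hypotheses that differ: $T$ is now orientation-reversing, and the $B$-components now obey $B(-u,\pm v) = B(u,v)$ rather than $B(-u,\pm v) = -B(u,v)$. The miracle is that these two sign changes cancel, so the normal form of $T\circ f_{\sigma\theta}\circ\phi$ still matches that of $f_\theta$, but now with the cuspidal angle $\sigma\theta$ in place of $-\sigma\theta$.

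First, I would establish the action of $T$ on the Frenet frame along $\mb c$. As in Remark~\ref{rem:key}, differentiating $T\mb c(-u)=\mb c(u)$ gives $\mb e(u)=-T\mb e(-u)$ and $T\mb n(-u)=\mb n(u)$. The key computation is the binormal: from $T\vect u\times T\vect w = (\det T)\,T(\vect u\times \vect w)$ together with $\det T = -1$, I obtain $T\mb b(-u) = \mb b(u)$, which is the opposite sign from the $T\in\op{SO}(3)$ case of Proposition~\ref{thm:V0}.

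Second, I would substitute into the normal form \eqref{eq:repF2} and compute $T\circ f_\psi\circ\phi(u,v)$ for an unspecified cuspidal angle $\psi(u)$, where $\phi(u,v)=(-u,-v)$ in case~(1) and $\phi(u,v)=(-u,v)$ in case~(2). The parity hypotheses leave both $A$ and $B$ invariant under $\phi$. Relative to Proposition~\ref{thm:V0}, the absence of a sign flip on $B$ and the absence of a sign flip on $T\mb b(-u)$ enter the $2\times 2$ rotation matrix symmetrically, so the two changes cancel and the expression matches $f_\theta(u,v)$ precisely when $\psi(-u) = \theta(u)$. Combined with the hypothesis $\theta(u) = \sigma\theta(-u)$, this forces $\psi = \sigma\theta$, yielding the identity $f_\theta = T\circ f_{\sigma\theta}\circ \phi$.

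Third, I would read off the ``in particular'' clauses from this identity. If $\sigma = -$, then $f_\theta = T\circ f_{-\theta}\circ\phi$; taking first fundamental forms (and using that $T$ is an ambient isometry) gives $\phi^{\,*}ds^2_{f_{-\theta}} = ds^2_{f_\theta}$, so $f_{-\theta}$ is isometric to $f_\theta$. Since $f_{-\theta}(u,0) = \mb c(u)$ induces the same orientation of $C$ as $f_\theta(u,0) = \mb c(u)$ but has cuspidal angle $-\theta$ of opposite sign, $f_{-\theta}$ is a faithful isomer of $f_\theta$; the uniqueness part of Theorem~\ref{thm:S1} then identifies it, up to right equivalence, with $\check f$. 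If instead $\sigma = +$, the identity reduces to $T\circ f_\theta\circ\phi = f_\theta$, which immediately shows that the image of $f$ is $T$-invariant. The only real obstacle is careful sign bookkeeping: confirming that the $\det T = -1$ sign in the cross-product identity and the altered parity of $B$ conspire to cancel in the rotation matrix, and then verifying that the resulting angle relation reads $\psi = \sigma\theta$ rather than $-\sigma\theta$. No new analytic input beyond what is used in Proposition~\ref{thm:V0} and Theorem~\ref{thm:S1} should be required.
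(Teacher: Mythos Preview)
Your proposal is correct and follows essentially the same approach as the paper's own proof, which is quite terse: it records that $\det T=-1$ forces $T\mb b(-u)=\mb b(u)$ (the one sign difference from Proposition~\ref{thm:V0}), asserts that the computation then goes through ``like as in the case of the proof of Proposition~\ref{thm:V0}'' to yield $f_\theta=T\circ f_{\sigma\theta}\circ\phi$, and refers back to that earlier proof for the ``in particular'' clauses. Your version simply makes the cancellation of the two sign changes (in $T\mb b$ and in the parity of $B$) explicit, and spells out the appeal to Theorem~\ref{thm:S1} for the identification with $\check f$, so there is no substantive difference in method.
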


\begin{proof}
Like as in the case of the proof of
Proposition \ref{thm:V0},
$-T\mb e(-u)=\mb e(u)$ and $T\mb n(-u)=\mb n(u)$ hold.
Since $\det(T)=-1$, we have $T\mb b(-u)=\mb b(u)$.
In the case of (1) (resp. (2)), we set $\phi(u,v):=(-u,-v)$ (resp. $\phi(u,v):=(-u,v)$),
then the relation
$f_\theta=T\circ f_{\sigma \theta}\circ \phi$ is obtained 
like as in the case of the proof of Proposition~\ref{thm:V0}.
One can also obtain the last assertion 
imitating the corresponding argument in
the proof of 
Proposition \ref{thm:V0}.
\end{proof}

\begin{example}\label{ex:std}
Let $a,b$ be real numbers so that $a>0$ and $b\ne 0$.
Then 
$$
\mb c(u)
:=\left(
a \cos \left(\frac{u}{c}\right)-a,\,\,
a\sin \left(\frac{u}{c}\right),\,\,
\frac{bu}{c}\right)
\qquad (u\in \R)
$$
gives a helix of constant curvature $\kappa:=a/c^2$ and 
constant torsion $\tau:=b/c^2$, where $c:=\sqrt{a^2+b^2}$.
At the point $\mb 0:=\mb c(0)$ on the helix,
$\mb c$ satisfies 
$
T(\mb c(\R))=\mb c(\R),
$
where $T\in \op{SO}(3)$ is the $180^\circ$-rotation
with respect to the line passing through the origin $\mb 0$
which is parallel to the principal normal vector
$\mb n(0)$. We set $a=b=1$,  $\theta=\pi/4$.
By setting
$$
(A_1,B_1):=(v^2,v^3),\quad
(A_2,B_2):=(v^2,v^5),\quad
(A_3,B_3):=(v^2,uv^3).
$$
The surfaces
$g_{i,\pm}:=f_{\pm \pi/4}$ 
($i=1,2,3$)
associated to the Fukui data
($\mb c,\pm \pi/4, A_i,B_i$)
correspond to
cuspidal edges,
$5/2$-cuspidal edges, and cuspidal cross caps,
respectively.
The first two cases satisfy (1) of
Proposition \ref{thm:V0}
and the third case satisfies (2) 
of Proposition \ref{thm:V0}.
So $g_{i,-}$ ($i=1,2,3$)
is a faithful isomer of $g_{i,+}$.
\end{example}

\begin{figure}[th]
\begin{center}
        \includegraphics[height=2.3cm]{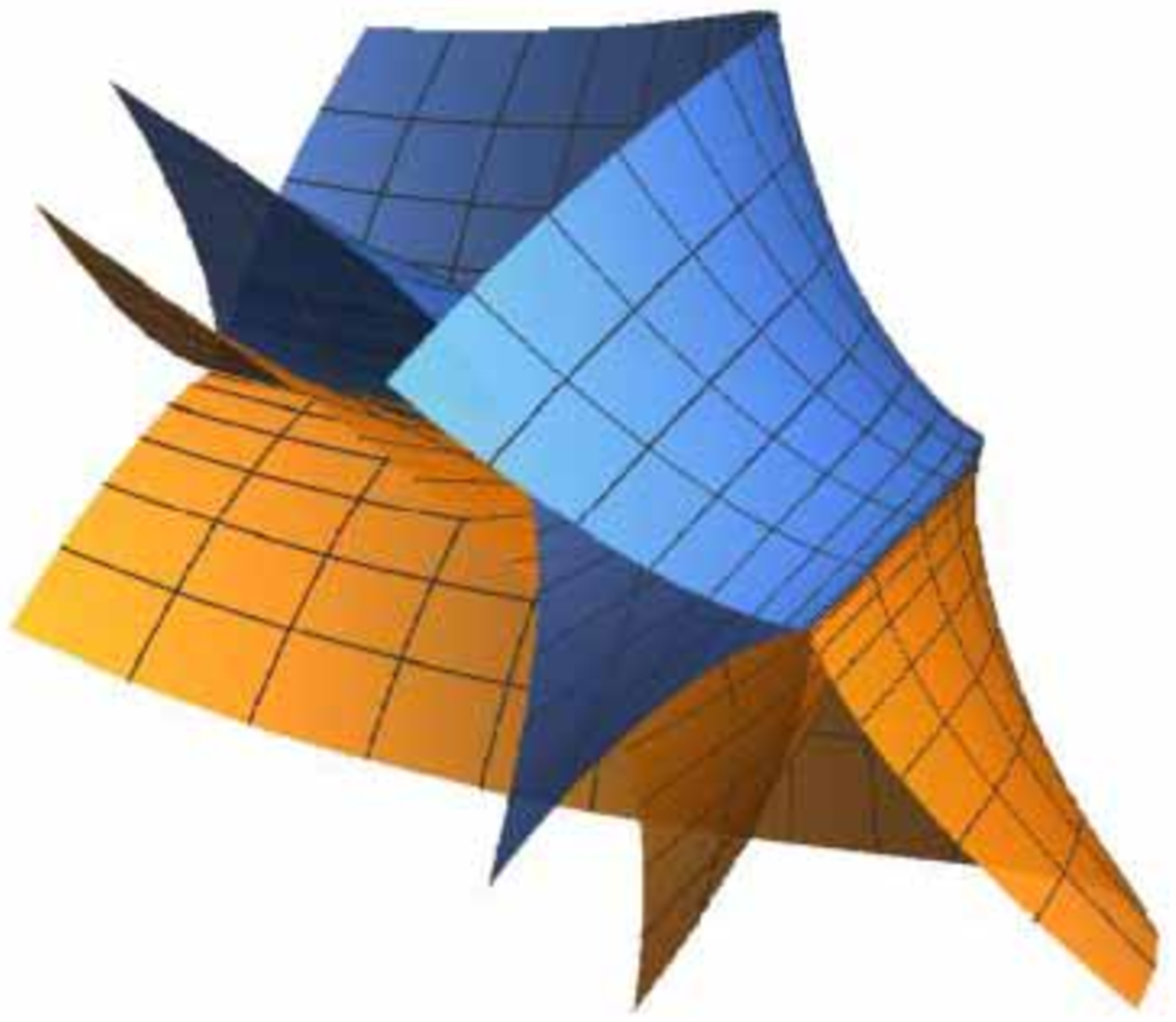} \quad
        \includegraphics[height=2.3cm]{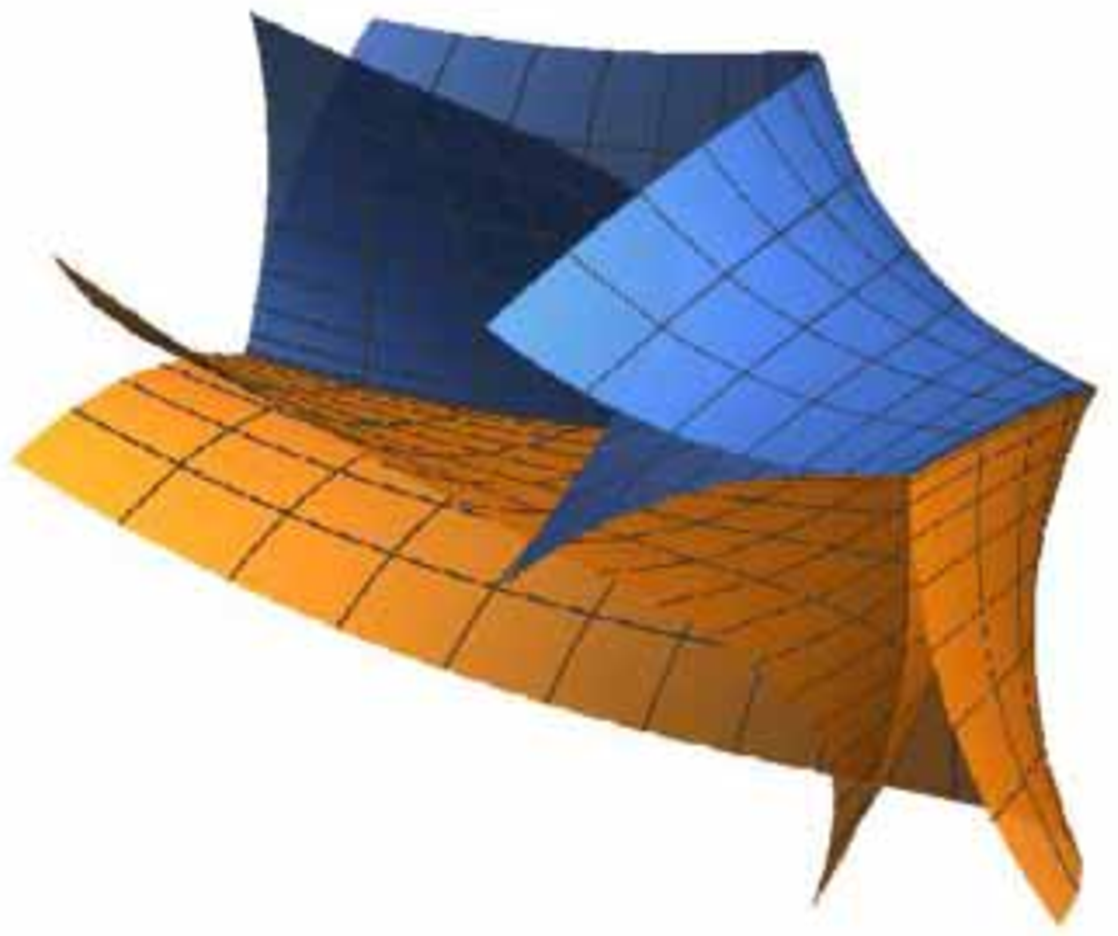}\quad \,\,
        \includegraphics[height=2.7cm]{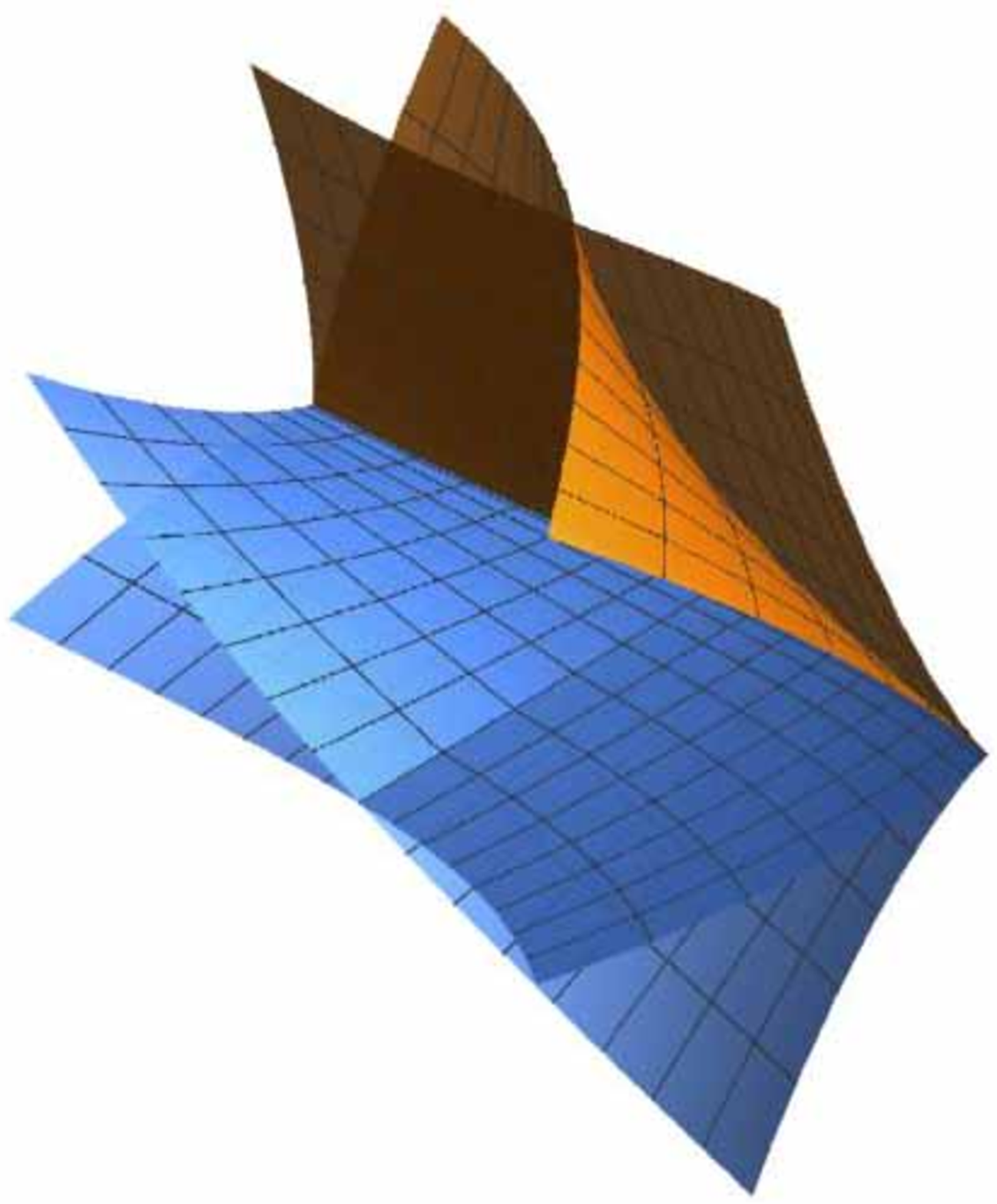}
\caption{The images of cuspidal edges $g_{1,\pm}$ (left),
$5/2$-cuspidal edges $g_{2,\pm}$ (center) 
and cuspidal cross caps $g_{3,\pm}$ (right) given in Example \ref{ex:std}.
(The orange surfaces correspond to $g_{i,+}$ and
the blue surfaces correspond to $g_{i,-}$ for $i=1,2,3$.)
}
\label{fig:steady} 
\end{center}
\end{figure}

Finally, we consider the case of fold singularities:

\begin{example}
We let $\mb c(u)$ be a $C^\infty$-regular space curve 
with positive curvature $\kappa$ and torsion $\tau$.
If we set
$$
g_\pm (u,v):=\mb c(u)+\frac{v^2}{2}(\cos\theta \mb n(u)\mp 
\sin\theta \mb b(u)),
$$
then it can be easily checked that
$g_-$ is a faithful isomer of $g_+$, where $\theta$ is a constant. 
These two surfaces can be
extended to the following regular ruled surfaces:
$$
\tilde g_\pm=\mb c(u)+\frac{v}{2}(\cos\theta \mb n(u)\mp \sin\theta \mb b(u)). 
$$
\end{example}

\appendix
\section{A representation formula for generalized cusps}\label{app1}

A plane curve $\sigma:J\to \R^2$
is said to have a singular point at $t=t_0$ if $\dot\sigma(t_0)=\mb 0$
(the dot means $d/dt$).
The singular point $t=t_0$ is called a {\it generalized cusp}
if 
$
\ddot \sigma(t_0)\ne \mb 0.
$
In this situation, it is well-known that
\begin{enumerate}
\item[(i)]
$t=t_0$ is a cusp if and only if 
$\ddot\sigma(t_0),\dddot\sigma(t_0)$ are linearly independent,
\item[(ii)](cf. \cite{P})
$t=t_0$ is a $5/2$-cusp if and only if 
$\ddot\sigma(t_0),\dddot\sigma(t_0)$ are linearly dependent
and
$$
3\op{det}(\ddot\sigma(t_0),\sigma^{(5)}(t_0))\ddot\sigma(t_0)-10
\op{det}(\ddot\sigma(t_0),\sigma^{(4)}(t_0))
\dddot\sigma(t_0)\ne \mb 0.
$$
\end{enumerate}
From now on, we set $t_0=0$.
The arc-length parameter $s(t)$ of $\sigma$ given by
$$
s(t):=\int_0^t |\dot \sigma(u)|du
$$
is not smooth at $t=0$, but if we set
$
w:=\op{sgn}(t)\sqrt{|s(t)|},
$
then this gives a parametrization of $\sigma$
near $t=0$, which is called the {\it half-arc-length parameter}
of $\sigma$ near $t=0$ in \cite{SU}.
However, for our purpose,
as Fukui \cite{F} did,
the parameter  
\begin{equation}
v:=\sqrt{2}w=\op{sgn}(t)
\left(2\int_0^t |\dot \sigma(u)|du\right)^{1/2}
\end{equation}
called the {\it normalized half-arc-length parameter}
is convenient, since it is compatible with the property
$|f_{vv}|=1$ for adapted coordinate systems (cf. Definition 
\ref{def:adapted})  of generalized cuspidal edges. 
This normalized half-arc-length parameter can be
characterized by the property that
$v^2/2$ gives the arc-length parameter of $\sigma$.
Then by \cite[Theorem 1.1]{SU}, we can write
\begin{equation}\label{A2}
\sigma(v)=\int_0^v u (\cos \theta(u),\sin \theta(u))du,
\qquad \theta(v)=\int_0^v \hat \mu(u)du.
\end{equation}
We need the following lemma, which can be proved by
a straightforward computation.

\begin{lemma}\label{lem:A1}
Let $v$ be the normalized half-arc-length parameter
of the generalized cusp $\sigma(w)$ at $w=0$.
Then there exists an orientation preserving isometry $T$ of $\R^2$
such that
\begin{equation}\label{eq:Tsigma}
T\circ \sigma(v)=
\Big (
\frac{v^2}2 - \frac{\mu_0^2 v^4}8 - \frac{\mu_0 \mu_1v^5}{10}, 
\frac{\mu_0 v^3}3 + \frac{\mu_1 v^4}8 + 
  \frac{(-\mu_0^3 + 2\mu_2)v^5}{30} 
\Big)+o(v^5),
\end{equation}
where
$$
\hat\mu(v)=\sum_{j=0}^{2} \mu_jv^j+o(v^3),
$$
and $o(v^5)$ $($resp. $o(v^3))$ is a term higher than $v^5$
$($resp. $v^3)$. 
\end{lemma}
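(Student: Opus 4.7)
The plan is to prove the lemma by direct Taylor expansion of the representation formula \eqref{A2}. First, I would apply \cite[Theorem 1.1]{SU} (the result invoked just above \eqref{A2}) to obtain the orientation preserving isometry $T$ of $\R^2$ that puts $T\circ\sigma$ into the canonical form
$$
T\circ\sigma(v)=\int_0^v u\bigl(\cos\theta(u),\sin\theta(u)\bigr)\,du,\qquad \theta(v)=\int_0^v\hat\mu(u)\,du.
$$
This reduces the lemma to a routine computation of power series coefficients, since the choice of $T$ is now fixed and all that remains is to track the expansion of the two components of $T\circ\sigma(v)$ to order $v^5$.

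Next, I would substitute $\hat\mu(v)=\mu_0+\mu_1 v+\mu_2 v^2+o(v^3)$ and integrate termwise to obtain
$$
\theta(v)=\mu_0 v+\frac{\mu_1}{2}v^2+\frac{\mu_2}{3}v^3+o(v^4).
$$
Inserting this into the Taylor series $\cos x=1-x^2/2+O(x^4)$ and $\sin x=x-x^3/6+O(x^5)$, and keeping only those terms whose contribution to $\int_0^v u\cos\theta(u)\,du$ and $\int_0^v u\sin\theta(u)\,du$ is of order at most $v^5$, a direct expansion gives
\begin{align*}
\cos\theta(u) &= 1-\tfrac12\mu_0^2 u^2-\tfrac12\mu_0\mu_1 u^3+o(u^3),\\
\sin\theta(u) &= \mu_0 u+\tfrac12\mu_1 u^2+\bigl(\tfrac13\mu_2-\tfrac16\mu_0^3\bigr)u^3+o(u^3).
\end{align*}

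Finally, multiplying each expansion by $u$ and integrating from $0$ to $v$ yields exactly the two components asserted in \eqref{eq:Tsigma}; the only nontrivial simplification is in the $v^5$ coefficient of the second component, where $\tfrac{1}{5}\bigl(\tfrac{\mu_2}{3}-\tfrac{\mu_0^3}{6}\bigr)=\tfrac{-\mu_0^3+2\mu_2}{30}$. The argument has no genuine obstacle, since it works entirely at the level of polynomial expansions; the only care required is to verify that the $o(u^3)$ remainders in $\cos\theta$ and $\sin\theta$ contribute only $o(v^5)$ after multiplication by $u$ and integration, which is immediate from $\int_0^v u\cdot o(u^3)\,du=o(v^5)$.
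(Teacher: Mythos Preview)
Your proposal is correct and is exactly the ``straightforward computation'' the paper alludes to without giving details: you use the representation \eqref{A2} (obtained via \cite[Theorem~1.1]{SU}) to fix the isometry $T$, then expand $\theta$, $\cos\theta$, $\sin\theta$ and integrate termwise. The coefficient checks you record are all accurate, and your remark that the $o(u^3)$ remainders contribute only $o(v^5)$ after multiplication by $u$ and integration is the only point needing a moment's care.
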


Using this with (i) and (ii), one can easily obtain the following assertion:

\begin{proposition}
Let $v$ be the normalized half-arc-length parameter
of the generalized cusp $\sigma(w)$ at $w=0$.
Then 
\begin{enumerate}
\item $w=0$ is a cusp of $\sigma$ if and only if
$\mu_0 \ne 0$, and 
\item $w=0$ is a $5/2$-cusp of $\sigma$ if and only if
$\mu_0 =0$ and $\mu_2\ne 0$.
\end{enumerate}
\end{proposition}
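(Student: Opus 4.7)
The plan is to apply the criteria (i) and (ii) stated immediately before the proposition to the Taylor expansion of $T\circ \sigma(v)$ furnished by Lemma \ref{lem:A1}. Since the conditions in (i) and (ii) are expressed purely in terms of linear independence and nonvanishing of derivatives of $\sigma$ at the singular point, they are invariant under orientation preserving isometries of $\R^2$. Hence it suffices to verify (i) and (ii) for $T\circ \sigma$ in place of $\sigma$ itself.

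Next, I would read off the relevant derivatives directly from \eqref{eq:Tsigma}. Writing $T\circ\sigma=(\sigma_1,\sigma_2)$ and differentiating term by term, one obtains
\[
\ddot\sigma(0)=(1,0),\quad \dddot\sigma(0)=(0,2\mu_0),
\]
\[
\sigma^{(4)}(0)=(-3\mu_0^2,\,3\mu_1),\quad
\sigma^{(5)}(0)=(-12\mu_0\mu_1,\,4(-\mu_0^3+2\mu_2)).
\]
The remainder $o(v^5)$ plays no role, since each of these derivatives depends only on coefficients of $v^k$ with $k\le 5$.

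For (1), the cusp criterion (i) is the linear independence of $\ddot\sigma(0)=(1,0)$ and $\dddot\sigma(0)=(0,2\mu_0)$, which clearly holds if and only if $\mu_0\ne 0$. For (2), by (1) the first half of (ii) (linear dependence of $\ddot\sigma(0)$ and $\dddot\sigma(0)$) is equivalent to $\mu_0=0$. Assuming $\mu_0=0$, one has $\dddot\sigma(0)=\mb 0$, so the second summand of the vector appearing in (ii) drops out and the nonvanishing condition reduces to
\[
3\det\bigl(\ddot\sigma(0),\sigma^{(5)}(0)\bigr)\,\ddot\sigma(0)\ne \mb 0.
\]
Substituting $\mu_0=0$ into the formula above gives $\sigma^{(5)}(0)=(0,8\mu_2)$, hence $\det(\ddot\sigma(0),\sigma^{(5)}(0))=8\mu_2$, and the displayed vector becomes $(24\mu_2,0)$. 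Therefore (ii) is equivalent to $\mu_2\ne 0$.

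There is no real obstacle: the proposition is essentially a book-keeping verification that reduces, via the normal form in Lemma \ref{lem:A1}, to reading off a handful of Taylor coefficients. The only point worth emphasizing is the use of the orientation preserving isometry $T$ to align $\ddot\sigma(0)$ with the first coordinate axis, after which both criteria become transparent determinant computations.
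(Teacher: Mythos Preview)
Your proof is correct and follows exactly the approach the paper indicates: the paper simply writes ``Using this with (i) and (ii), one can easily obtain the following assertion,'' and you have carried out precisely that computation by reading off the derivatives of $T\circ\sigma$ from the Taylor expansion in Lemma~\ref{lem:A1} and checking criteria (i) and (ii). The derivative values and the reductions in both cases are accurate.
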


It is remarkable that the coefficient $\mu_1$ does not affect
the criterion for $5/2$-cusps.
In this case, $\mu_0=0$ holds, 
and $\mu_1$ and $\mu_2$ are proportional to
the \lq\lq secondary cuspidal curvature\rq\rq\ and
the \lq\lq bias\rq\rq\ of $\sigma(t)$ at $t=0$, respectively.
Geometric meanings for these two invariants for 
$5/2$-cusps can be found in \cite[Proposition 2.2]{HS}.

\begin{ack}
The authors thank Toshizumi Fukui and Wayne Rossman
for valuable comments.
\end{ack}

\end{document}